\titleformat*{\section}{\bfseries\scshape\Large}
\titleformat*{\subsection}{\bfseries\scshape\large}
\titleformat*{\subsubsection}{\itshape}
\newtheorem{theorem}{Theorem}[section]
\newtheorem{lemma}[theorem]{Lemma}
\newtheorem{proposition}[theorem]{Proposition}
\newtheorem{corollary}[theorem]{Corollary}
\theoremstyle{definition}
\newtheorem{definition}[theorem]{Definition}
\newtheorem{definitions}[theorem]{Definitions}
\newtheorem{example}[theorem]{Example}
\newtheorem{remark}[theorem]{Remark}
\newtheorem{remarks}[theorem]{Remarks}
\newcommand{\ee}{\mathsf{e}}
\newcommand{\ii}{\mathsf{i}}
\newcommand{\bb}[1]{\mathbb{#1}}
\newcommand{\braid}{\mathfrak{b}}
\newcommand{\br}{\mathfrak{b}}
\newcommand{\chor}{\mathfrak{c}}
\newcommand{\Chor}{\mathfrak{C}}
\newcommand{\CC}{\mathbb{C}}
\newcommand{\defn}[1]{{\bfseries\itshape #1}}
\newcommand{\divides}{\,|\,}
\newcommand{\EGamma}{\mathrm{E}\Gamma}
\newcommand{\half}{{\textstyle\frac12}}
\newcommand{\trivial}{\mathbf{1}}
\newcommand{\Fix}{\mathrm{Fix}}
\newcommand{\RR}{\mathbb{R}}
\newcommand{\ZZ}{\mathbb{Z}}
\newcommand{\nfrac}[2]{\,\raisebox{3pt}{\scriptsize$#1$}\kern-0.8pt/\hspace{-0.3mm}\raisebox{-2pt}{\scriptsize$#2$}}
\newcommand{\LX}{\Lambda X}
\newcommand{\Xn}{X^{(n)}}
\newcommand{\Xnhat}{X_*^{(\widehat{n})}}
\newcommand{\Sone}{{\mathsf{S}^1}}
\newcommand{\Sonehat}{\widehat{\mathsf{S}^1}}
\newcommand{\jhat}{\widehat\jmath}
\newcommand{\khat}{\widehat{k}}
\newcommand{\ellhat}{\widehat{\ell}}
\newcommand{\nhat}{{\widehat{n}}}
\newcommand{\uhat}{{\widehat{u}}}
\newcommand{\sigmahat}{{\widehat{\sigma}}}
\newcommand{\deltahat}{{\widehat{\delta}}}
\newcommand{\Gammahat}{{\widehat{\Gamma}}}
\newcommand{\Gt}{G_\tau}
\newcommand{\eps}{\varepsilon}
\newcommand{\A}{\mathcal{A}}
\newcommand{\SO}{\mathsf{SO}}
\newcommand{\OO}{\mathsf{O}}
\newcommand{\lra}{\longrightarrow}
\newcommand{\paths}{\mathcal{P}}
\newcommand{\braids}{B}
\newcommand{\zerobar}{\overline{0}}
\begin{document}
\title{Classification of symmetry groups \\ for planar $n$-body choreographies}
\author{James Montaldi, Katrina Steckles \\ \small University of Manchester}
\date{October 2013}

\thispagestyle{empty}

\maketitle

\noindent\hrulefill 

\smallskip

\centerline{\large\scshape Abstract}

\medskip

\noindent Since the foundational work of Chenciner and Montgomery in 2000 there has been a great deal of interest in choreographic solutions of the n-body problem: periodic motions where the n bodies all follow one another at regular intervals along a closed path. The principal approach combines variational methods with symmetry properties. In this paper, we give a systematic treatment of the symmetry aspect.  In the first part we classify all possible symmetry groups of planar n-body, collision-free choreographies.  These symmetry groups fall in to 2 infinite families and, if n is odd, three exceptional groups.  In the second part we develop the equivariant fundamental group and use it to determine the topology of the space of loops with a given symmetry, which we show is related to certain cosets of the pure braid group in the full braid group, and to centralizers of elements of the corresponding coset. In particular, we refine the symmetry classification by classifying the connected components of the set of loops with any given symmetry.  This leads to the existence of many new choreographies in $n$-body systems governed by a strong force potential.

\medskip

{\small

\noindent \emph{MSC 2010}: {37C80, 70F10, 58E40} \\[6pt]
\noindent \emph{Keywords}: Equivariant dynamics, n-body problem, variational problems, loop space, equivariant topology, braid group
 }

\noindent\hrulefill 

{\small 
\tableofcontents
}

\section{Introduction}

The problem of determining the motion of $n$ particles under gravitational interaction has long been of interest, and since Poincar\'e there has been particular interest in periodic motions.  In the last 20 years, renewed interest has followed the discovery of what are now called choreographies:  periodic motions where the particles, assumed to be of equal mass, follow each other around a closed path at regular intervals. In 1993, Moore \cite{Moore93} discovered the first of these, where 3 identical particles move along a figure 8 curve; he found this numerically.  Independently, Chenciner and Montgomery \cite{Fig8} (re)discovered this figure-8 solution a few years later, but they proved its existence using a clever combination of symmetry methods and variational techniques.

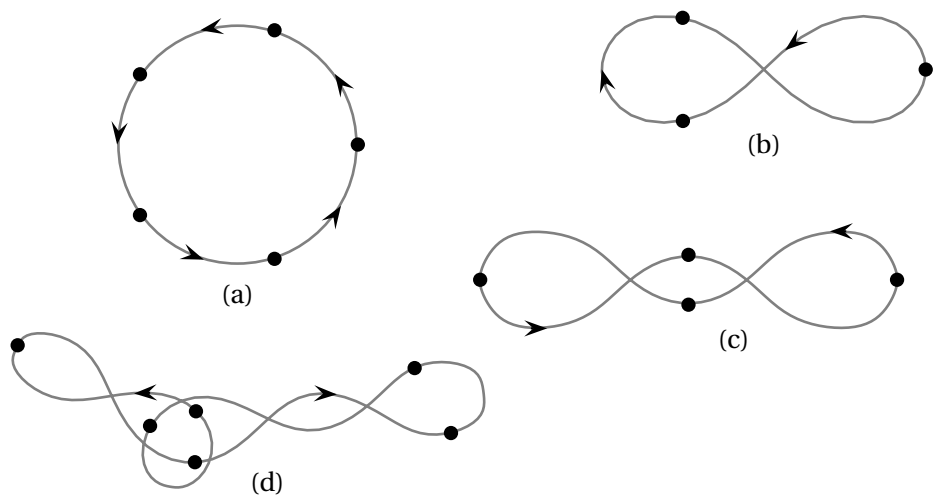
\begin{figure}[ht]
\begin{center}
\begin{pspicture}(-6,-3.5)(6,3.5)
\put(-3,1){
   \psset{unit=0.8}
   \pscircle[linewidth=1pt,linecolor=gray](0,0){2}
   \psdots(2,0)(0.618,1.9)(-1.618,1.17)(-1.618,-1.17)(0.618,-1.9)
   \psline{->}(1.721,1.018)(1.618,1.176)
   \psline{->}(-0.558,1.913)(-0.618,1.902)
   \psline{->}(-1.993,0.063)(-2,0)
   \psline{<-}(-0.558,-1.913)(-0.618,-1.89)
   \psline{<-}(1.721,-1.018)(1.618,-1.176)
   \rput(0,-2.5){(a)}
}
\rput(4,2){
{\psset{unit=2} 
    \parametricplot[linewidth=1pt,linecolor=gray]{0}{360}{%
        t sin 1.096 mul t 5 mul sin 0.0252 mul sub t 7 mul sin 0.0058 mul sub
        t 2 mul sin 0.3373 mul t 4 mul sin 0.0557 mul add}
    \psdots(1.077,0)(-.5400, .3437)(-.5384, -.3437)
    \psline{->}(.1911, .1676)(.1419, .1276)
    \psline{->}(-1.05,-0.1)(-1.077,0)
}
 \rput(0,-1){(b)}}
\rput(3,-0.8){
 \psset{unit=2} 
   \parametricplot[linewidth=1pt,plotpoints=150,linecolor=gray]{0}{360}{%
t 1 mul cos 1.37 mul   t 3 mul cos 0.024 mul add  t 5 mul cos 0.006 mul add 
   t 7 mul cos 0.023 mul sub  t 9 mul cos 0.0104 mul add
t 1 mul sin 0.167 mul    t 3 mul sin 0.232 mul add    t 5 mul sin 0.069 mul sub
  t 7 mul sin 0.013 mul add   t 9 mul sin 0.0088 mul sub }
   \psdots(1.386, 0)(0, -.166)(-1.386, 0)(0, .166)
   \psline{->}(1.0480, .32289)(.94007, .32302)
   \psline{->}(-1.0480, -.32289)(-.94007, -.32302)
\rput(0.3,-0.4){(c)}}
\rput(-3,-2.5){
 \psset{unit=2} 
   \parametricplot[linewidth=1pt,plotpoints=150,linecolor=gray]{0}{360}{%
t 1 mul cos -1.100 mul   t 2 mul cos 0.4339 mul add  t 3 mul cos 0.1958 mul add 
  t 4 mul cos 0.1119 mul add  t 5 mul cos 0.04111 mul add  t 7 mul cos 0.006463 mul add
  t 8 mul cos 0.02501 mul add  t 9 mul cos 0.01195 mul add  t 10 mul cos 0.0004111 mul add
  t 11 mul cos 0.006528 mul add
t 1 mul sin -0.7476 mul add  t 2 mul sin 0.1002 mul add  t 3 mul sin -0.06577 mul add
  t 4 mul sin -0.05939 mul add  t 5 mul sin -0.04463 mul add  t 7 mul sin 0.003164 mul add
  t 8 mul sin 0.001949 mul add  t 9 mul sin 0.004285 mul add  t 10 mul sin -0.004479 mul add
  t 11 mul sin -0.02171 mul add
t 1 mul cos -0.04743 mul  t 2 mul cos -0.1219 mul add  t 3 mul cos -0.04677 mul add
  t 4 mul cos 0.003176 mul add  t 5 mul cos 0.1449 mul add  t 7 mul cos 0.008940 mul add
  t 8 mul cos 0.003650 mul add  t 9 mul cos 0.004890 mul add  t 10 mul cos 0.01816 mul add
  t 11 mul cos 0.006594 mul add
t 1 mul sin 0.1201 mul add  t 2 mul sin 0.2715 mul add  t 3 mul sin 0.03161 mul add
  t 4 mul sin -0.03008 mul add  t 5 mul sin 0.09286 mul add  t 7 mul sin -0.05262 mul add
  t 8 mul sin 0.04184 mul add  t 9 mul sin 0.02347 mul add  t 10 mul sin 0.007619 mul add
   t 11 mul sin 0.002672 mul add
 }
 \psdots(-1.46, .415)(-.281, -.363)(1.42, -.169)(1.18, .263)(-.579, -.121)(-.275, -0.0249)
\psline{->}(.619, 0.09)(.662, 0.088)
\psline{->}(-.579, 0.099)(-.686, 0.094)
\rput(0.2,-.5){(d)}
}
\end{pspicture}
\end{center}
\caption{Examples of planar choreographies; (a) circular, (b) the figure eight, (c) the super-eight and (d) a non-symmetric choreography.}
\label{fig:choreo-egs}
\end{figure}

Since the work of Moore, Chenciner and Montgomery there have been many papers written on the subject of choreographies.  We restrict ourselves to the planar case, although interesting examples of choreographies have been shown to exist in higher dimensions \cite{hiphop1,CV2000,Ferrario07,FGN-Platonic}.  In the plane, the first choreography known (in hindsight) was the circular choreography of Lagrange, in which the particles are positioned at the vertices of a regular $n$-gon rotating with constant speed about
its centre. Soon after the work of Chenciner and Montgomery, J.~Gerver suggested a 4-particle choreography on what is called the `super-eight', a curve similar to the figure eight but with three internal regions and two crossings (\cite{CGMS02} p.~289 and Fig.~\ref{fig:choreo-egs}(c)). The papers \cite{CGMS02} and \cite{Simo01a} contains many examples of choreographies, found numerically, and it will be noticed that almost all have some geometric symmetry.

Some work approaches the questions using numerics and some use an analytic-topological approach, but almost all methods use a variational setting for the problem.  The original paper by Moore \cite{Moore93} was asking how the theory of braids could be used in the study of dynamical systems of $n$ interacting bodies in the plane---any periodic motion of $n$ particles can be represented by a braid (indeed a pure braid as the particles return to their original position after one period), and Moore's numerical approach was to use the braid as an `initial condition' for the variational problem and then `relax' the curve by decreasing the action.   Chenciner and Montgomery's approach was also variational, but they used explicitly the symmetries involved in the figure 8 solution, together with the variational setup, and the crux of their existence proof was to show that minimizing the action within the given symmetry class did not involve collisions. 

The idea of using symmetry methods in the variational problem was taken up in a very interesting paper by Ferrario and Terracini \cite{FT04}, where they gave, among other things, conditions on the symmetry under consideration guaranteeing that a minimizer of the action is free of collisions (their `rotating circle' condition, which we describe in Section~\ref{sec:classification}). 

An excellent review by Terracini was published in 2006 \cite{Terracini-review}, containing many more references. 

The principal aim of the present paper is to make systematic the combination of topological (braid) methods and symmetry methods. We begin by classifying all possible symmetry groups arising for (collision-free) choreographies in the plane, and then proceed to study symmetries in loop space, firstly in general and then referring specifically to choreographies. The work is an extension of the work presented in the second author's thesis \cite{Steckles}.

\subsection{Configurations and symmetries}

We are interested in the motion of an isolated system of $n$ identical particles in the plane. We identify the plane with $\bb{C}$---the complex numbers.  Under these assumptions, the centre of mass of the particles is given by $\frac1n \sum_jz_j$ and without loss of generality we can take this point to be fixed at the origin.  In addition we assume the particles do not collide. Later we assume they interact under a conservative attractive force.  Much of this section follows the work of Ferrario and Terracini \cite{FT04}.

The configuration space of the system is therefore
$$\Xn := \bigl\{(z_1,\dots,z_n)\in\mathbb{C}^n \;\mid\; {\textstyle\sum_j} z_j=0,\; z_i\neq z_j\;\forall i\neq j\bigr\},$$
which is a (non-compact) manifold of real dimension $2n-2$.  There is a natural symmetry group acting on $\Xn$, namely the product of the orthogonal group in the plane and the group of permutations of the $n$ points $\Gamma:=\OO(2)\times S_n$ acting by, 
\begin{equation}\label{eq:spatial action}
(A,\sigma)\cdot(z_1,\dots,z_n) = (Az_{\sigma^{-1}(1)},\dots, Az_{\sigma^{-1}(n)}).
\end{equation}
All group actions will be \emph{left} actions, whence the inverse on the permutation in \eqref{eq:spatial action}. 

Let $\Lambda=\Lambda\Xn$ be the space of all loops in $X^{(n)}$. A loop is by definition a continuous map 
$$u:\bb{T}\to X^{(n)},$$ 
where $\bb{T}$ is the time circle: we identify $\bb{T}=\mathbb{R}/\mathbb{Z}$, so loops are parametrized by $t\in[0,1]$.  Then
\begin{equation}\label{eq:z1...zn}
u(t)=(z_1(t),\dots,z_n(t))\in\Xn,
\end{equation}
where each $z_j:\bb{T}\to\CC$.  

Denote by $\Sonehat$ the symmetry group of rotations and reflections of the time circle $\bb{T}$, which is isomorphic to $\OO(2)$.   The action of $\Gamma$ on $\Xn$ extends to an action of $\Gamma\times\Sonehat$ on the loop space $\Lambda$:  
if $\tau\in\Sonehat$ we put
\begin{equation}\label{eq:temporal action}
((A,\sigma,\tau)\cdot u)(t) := (A,\sigma)\cdot u(\tau^{-1}(t)),
\end{equation}
where $(A,\sigma)$ acts as in (\ref{eq:spatial action}).  
A loop $u$ is said to have symmetry $G<\Gamma\times\Sonehat$ if $G$ is the isotropy subgroup of $u$ under the $\Gamma\times\Sonehat$-action (we use the notation $G<H$ to mean $G$ is a subgroup of $H$). Explicitly, this means that
$$ u(\tau(t)) = (A,\sigma)\cdot u(t), \qquad \forall\, (A,\sigma,\tau)\in G.$$
Notice that  $g=(I,\sigma,\tau)\in G$ means that
$$z_{\sigma(j)}(\tau(t)) = z_j(t)\quad(\forall j,t),$$
and consequently particles whose labels are within the same orbit of $\sigma$ follow the same path. In particular, if $\sigma$ is a cycle of order $n$ (the number of particles), then \emph{all} the particles follow the same path.

A particular subgroup of $\Gamma\times\Sonehat$ of central interest is the \defn{choreography group} $\Chor_n$ which is the cyclic group of order $n$ generated by $\chor=(I,\, \sigma_1,\,-\nfrac{1}{n})$, where $\sigma_1$ is the cycle $\sigma_1=(1\;2\;3\;\dots\; n)\in S_n$.  

Recall that, given any action of a group $G$ on a space $X$ the fixed point space is defined to be
$$\Fix(G, X) = \left\{x\in X\mid \forall g\in G,\; g\cdot x = x\right\}.$$

\begin{definition}
A \defn{choreography} is an element of the fixed point space $\Fix(\Chor_n,\Lambda\Xn)$.
\end{definition}

We denote this fixed point space by $\Lambda^\chor=\Lambda^\chor\Xn$.  
Explicitly, the loop (\ref{eq:z1...zn})  is a choreography if, for each $j=1,\dots,n$, 
\begin{equation} \label{eq:choreography defn}
z_{j+1}(t) = z_j(t+\nfrac1n),
\end{equation}
where the index is taken modulo $n$; in particular particle 1 follows particle 2 which in turn follows particle 3 etc., and all with the same time delay of $1/n$.  As already pointed out, this definition requires all the particles to move on the same curve. Such motions are sometimes called \emph{simple} choreographies, to distinguish from more general choreographies where more than one curve is involved, and possibly different numbers of particles on different curves: we only consider these simple choreographies. Note that the definition does not imply that the particles are in numerical order around the curve as Example~\ref{ex:circular motion} below shows.

It follows from the definition that the symmetry group $G$ of any choreography satisfies $\Chor_n<G$ (in Proposition~\ref{prop:kernels} we show it is in fact a normal subgroup).  Since $\Chor_n$ is of order $n$, it follows that $n$ divides the order of the symmetry group $G$ of any choreography.

For a given symmetry group $G<\Gamma\times\Sonehat$, we denote by $\rho,\sigma,\tau$ the  projections of $G$ to each component.  That is, given an element $g\in\Gamma\times\Sonehat$ we write its three components as $\rho(g)\in \OO(2),\;\sigma(g)\in S_n$ and $\tau(g)\in\Sonehat$.  

\begin{definition}
A subgroup $G<\Gamma$ is said to be \defn{non-reversing} if  $\tau(G)<\Sone$, otherwise it is \defn{reversing}.  
\end{definition}

These are what Ferrario and Terracini call symmetry groups of cyclic and dihedral type, respectively \cite{FT04}. 
Note that their `brake type' symmetry groups cannot occur in collision-free (simple) choreographies with more than 1 particle.

\begin{example} \label{ex:circular motion}
One important---but in a sense trivial---class of choreography is what we call the \defn{circular choreographies}, where the particles lie at the vertices of a regular $n$-gon which rotates uniformly about the centre of mass; they are the generalizations of the Lagrange solution to $n$ particles.  Explicitly, consider the parametrized circle $z(t)=\exp(2\pi\ii t)$ in the plane. Let $\ell$ be an integer coprime to $n$ and define a motion of $n$ particles by, for $j=1,\dots,n$,
\begin{equation} \label{eq:circular}
z_j(t) = \ee^{2\pi\ii\ell j/n}\,z(\ell t).
\end{equation}
This is easily seen to be a choreography, satisfying \eqref{eq:choreography defn}. The full symmetry group of this motion is isomorphic to a semidirect product $\OO(2)\ltimes\bb{Z}_n\simeq(\SO(2)\times\bb{Z}_n)\rtimes\bb{Z}_2$; an explicit description of the elements of this group is given in Eq.~(\ref{eq:speed ell}).  
In this motion, particle $j$ immediately follows particle $j+m$ around the circle, where $m\ell=1\bmod n$, with a time delay of $1/\ell m$; that is
$$z_{j+m}(t) = z_j(t+\nfrac{1}{m\ell}),$$
which implies the choreography condition (\ref{eq:choreography defn}).   
In this example, we have
$$\ker\rho=\Chor_n,\quad \ker\sigma\simeq\SO(2),\quad \ker\tau= \left<(R_{2\pi\ell/n},\sigma_1,0)\right>\simeq\bb{Z}_n.$$  
We assume $\ell$ is coprime to $n$ for otherwise this motion involves particles coinciding for all time. 
\end{example}

\paragraph{Classification} The main result of the first part of the paper (stated as Theorem~\ref{thm:classification}) is a complete classification of all possible symmetry groups of (simple) planar choreographies. In other words, we classify all those subgroups of $\OO(2)\times S_n\times \Sonehat$ that on the one hand contain $\Chor_n$ and on the other are realized as the symmetry group of \emph{some} collision-free $n$-body motion.   For a given number $n$ of particles, one finds that there are two infinite families of symmetry group and, if $n$ is odd, three exceptional symmetry groups.  Full details of the symmetry groups are given in Section~\ref{sec:classification}; here we give a brief description.  For the infinite families, the curve on which the particles move has the symmetry of a regular $k$-gon for some $k\geq1$.  As the particles move, they visit the `vertices' of the $k$-gon in some order. This order is similar to the difference between a pentagon and a pentagram: in the former the vertices are visited in geometric order, while in the latter the vertices are visited alternately (i.e., in the order 1, 3, 5, 2, 4, rather than 1, 2, 3, 4, 5).  There is a convenient notation used to distinguish these, the so-called Schl\"afli symbol. In this notation, the basic regular convex $k$-gon is denoted $\{k\}$, while the $k$-gon with every $\ell^{\mathrm{th}}$ vertex visited in sequence is denoted $\{k/\ell\}$ ---in particular $\{k/1\}=\{k\}$. Thus the pentagon is denoted $\{5\}$, while the pentagram is denoted $\{5/2\}$.  In order for the geometric object $\{k/\ell\}$ to consist of a single closed curve, it is necessary and sufficient that $k$ and $\ell$ are coprime (which we write throughout as $(k,\ell)=1$). 
We adapt this notation, and denote the symmetry groups for $n$ particles moving on a  curve of type $\{k\}$ or $\{k/\ell\}$ by $C(n,k)$ or $C(n,k/\ell)$ respectively if there is no time reversing symmetry, and by $D(n,k)$ or $D(n,k/\ell)$ respectively if there is such a symmetry.  See for example Fig.~\ref{fig:ell comparison} for choreographies illustrating the difference between $D(6,5)$ (pentagon) and $D(6,5/2)$ (pentagram).  In these infinite families, the time-reversing symmetries occur in combination with a reflection in the plane. As $k\to\infty$ the $k$-gon tends to a circle and so we denote the symmetry of the circular choreography introduced above by $D(n,\infty/\ell)$.

On the other hand, if $n$ is odd, there are three exceptional symmetry groups denoted $C'(n,2)$, $D'(n,1)$ and $D'(n,2)$ (where, as always, $n$ is the number of particles). For example, the figure eight choreography has symmetry $D'(3,2)$.  In these groups there is always an element which either acts as time-reversing symmetry but not a reflection (in $D'(n,1)$), or a reflection which is not acting as time-reversal in $C'(n,2)$, or both in the case of $D'(n,2)$.  Precise details are given in Section~\ref{sec:classification}, while a number of different choreographies are illustrated in Figure~\ref{fig:many choreographies}. 

\paragraph{Comparison to recent literature}  Stewart \cite{Stewart96} gives a classification of symmetry groups arising in many body problems which is different from ours. This difference arises for two reasons: firstly Stewart does not restrict attention to choreographies, and secondly his approach is local, and would apply to Hopf bifurcation or Lyapunov centre theorem scenarios: they are the symmetries that can arise for periodic orbits in a linear system.  On the other hand, Barutello, Ferrario and Terracini \cite{BFT08} do give a classification of symmetries for 3-body choreographies.  However, their classification is simpler than ours (even for $n=3$) as they consider the motion in a rotating frame (or modulo rotations), which has the effect of projecting out the rotational part of our symmetry groups, so effectively they consider subgroups of $\ZZ_2\times S_3\times\Sonehat$; in particular all the groups $D(3,k/\ell)$ in this paper are collapsed to the single `Lagrange type', which is the image of $D(3,\infty/\ell)$ in $\ZZ_2\times S_3\times\Sonehat$. 
See Table~\ref{table:comparison}. This is similar to viewing the motion on the shape sphere, see for example \cite{Montgomery-AMS} and references therein.

\begin{table}
\centering
\begin{tabular}{@{}lllll@{}}
\toprule
  This paper & $D(3,k/\ell) $ & $D'(3,2)$ & $D'(3,1)$ & $C'(3,2)$ \\
  Barutello et al. & `Lagrange'  & $D_6$ & $D_3$  & $C_6$ \\
\bottomrule
\end{tabular}
\caption{Comparison of  the classification in this paper with that of Barutello, Ferrario and Terracini \cite{BFT08} for 3-particle choreographies (note that in \cite{BFT08} the dihedral group of order $2m$ is denoted $D_{2m}$ while here we denote it $D_m$)}
\label{table:comparison}
\end{table}

\begin{figure}[p]
\begin{center}
\subfigure[$D(6,4)$]{\includegraphics[scale=0.16]{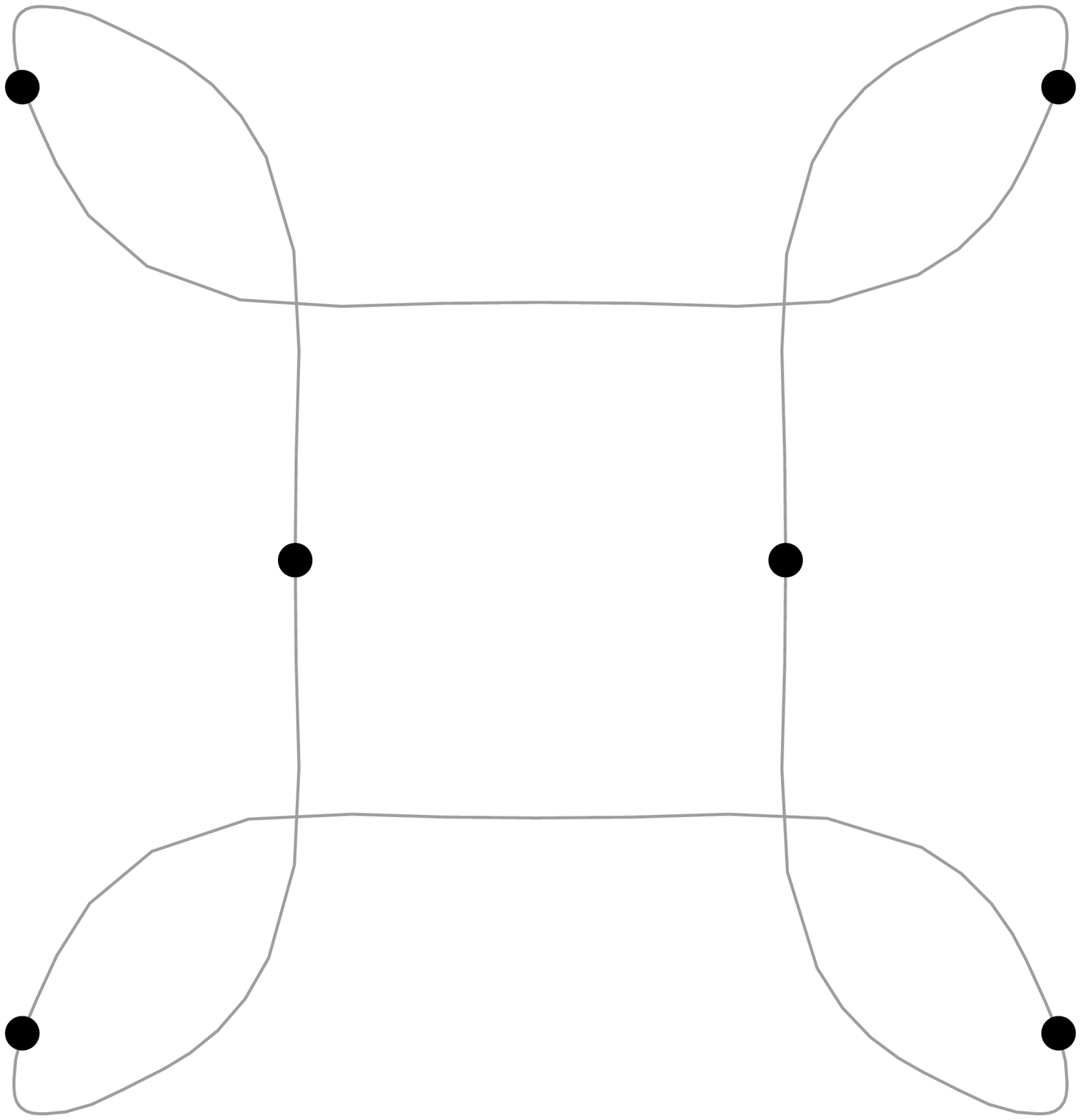}}
 \qquad 
\subfigure[$D(6,4)$]{\includegraphics[scale=0.16]{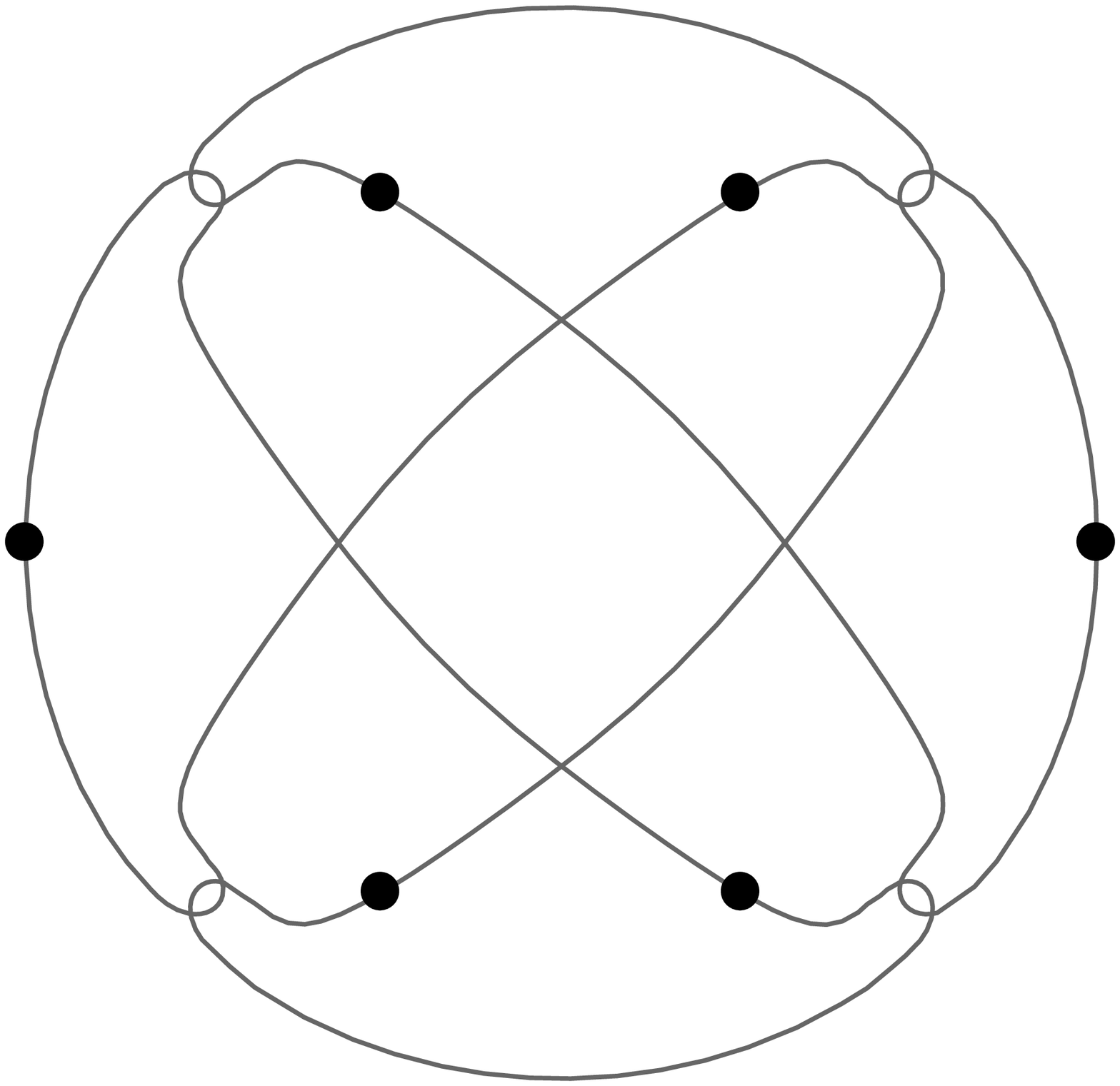}}
 \qquad 
\subfigure[$D(6,4)$]{\includegraphics[scale=0.16]{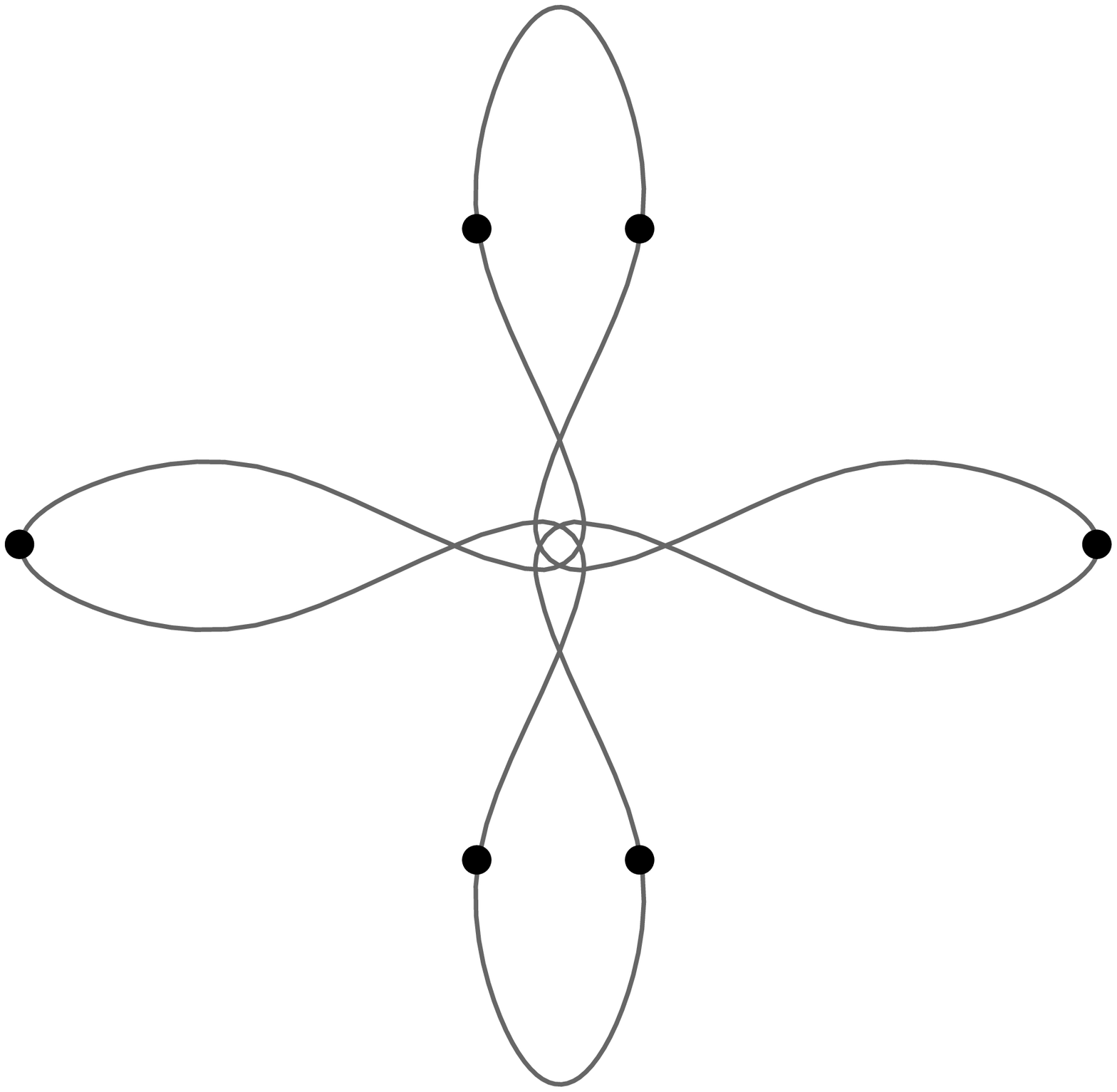}}
 \quad
\subfigure[$D(6,4)$]{\includegraphics[scale=0.16]{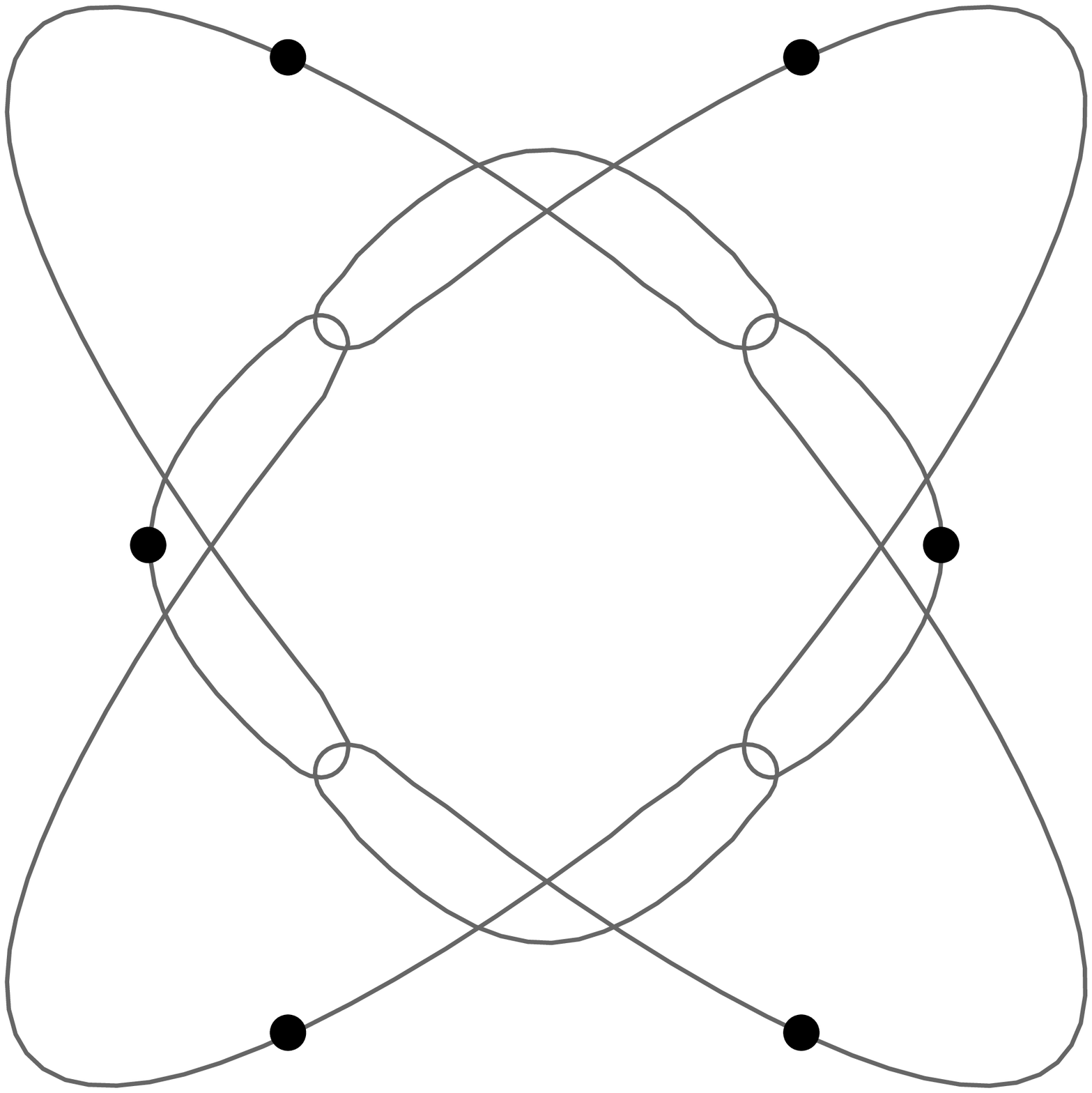}}
 \\[12pt] 
\subfigure[$D(8,3)$]{\includegraphics[scale=0.16]{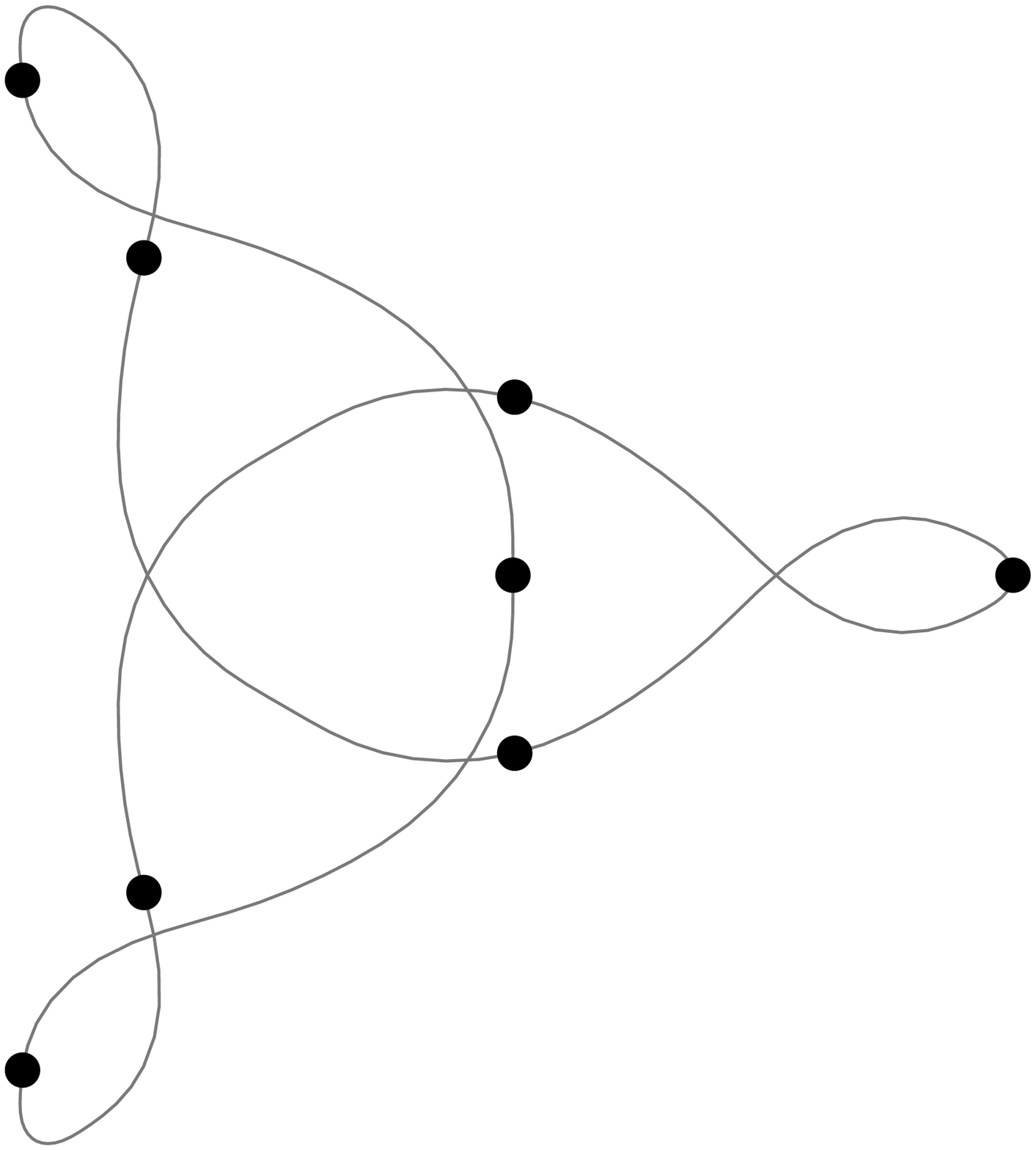}}
 \qquad 
\subfigure[$D(8,3)$]{\includegraphics[scale=0.16]{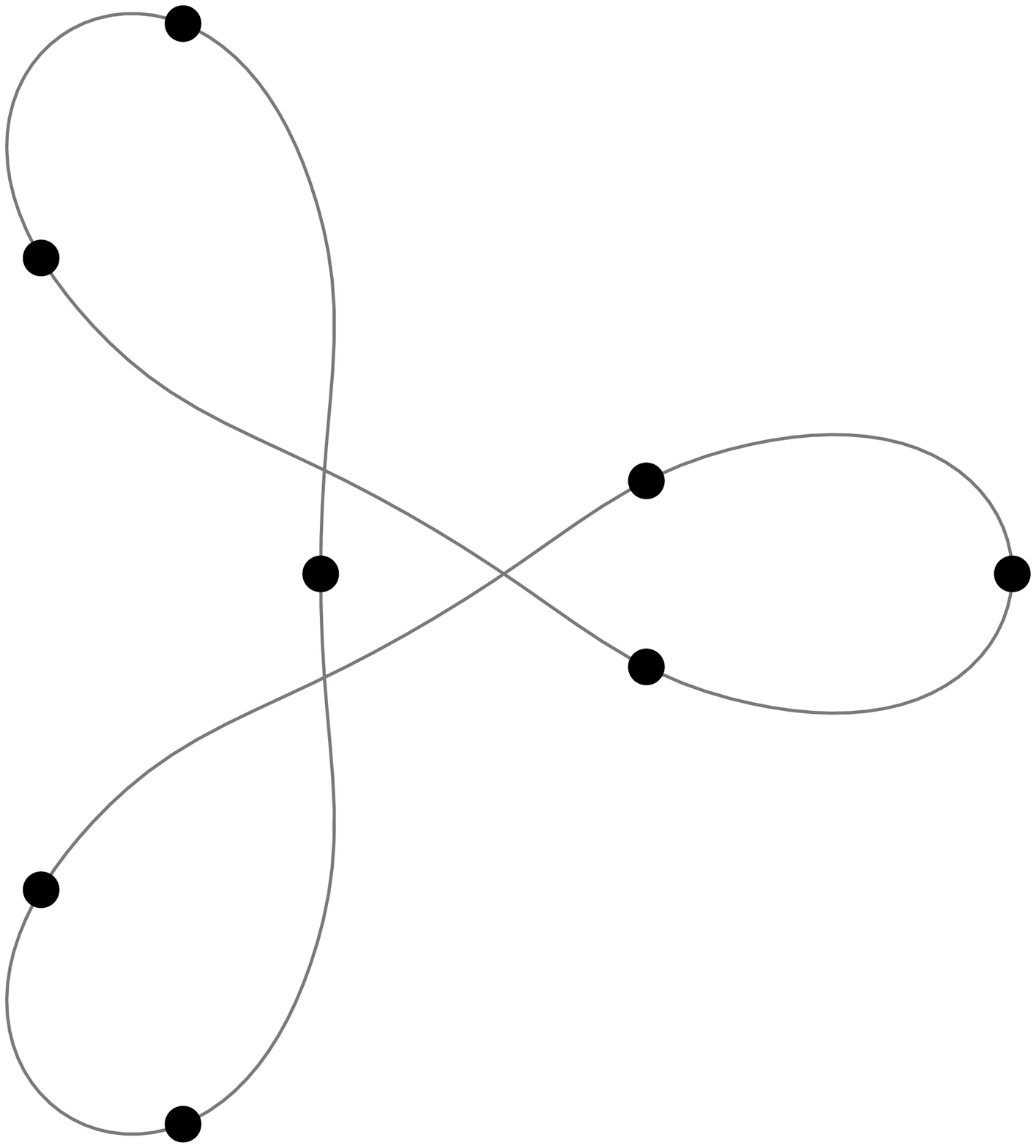}}
 \qquad 
\subfigure[$D(8,3)$]{\includegraphics[scale=0.16]{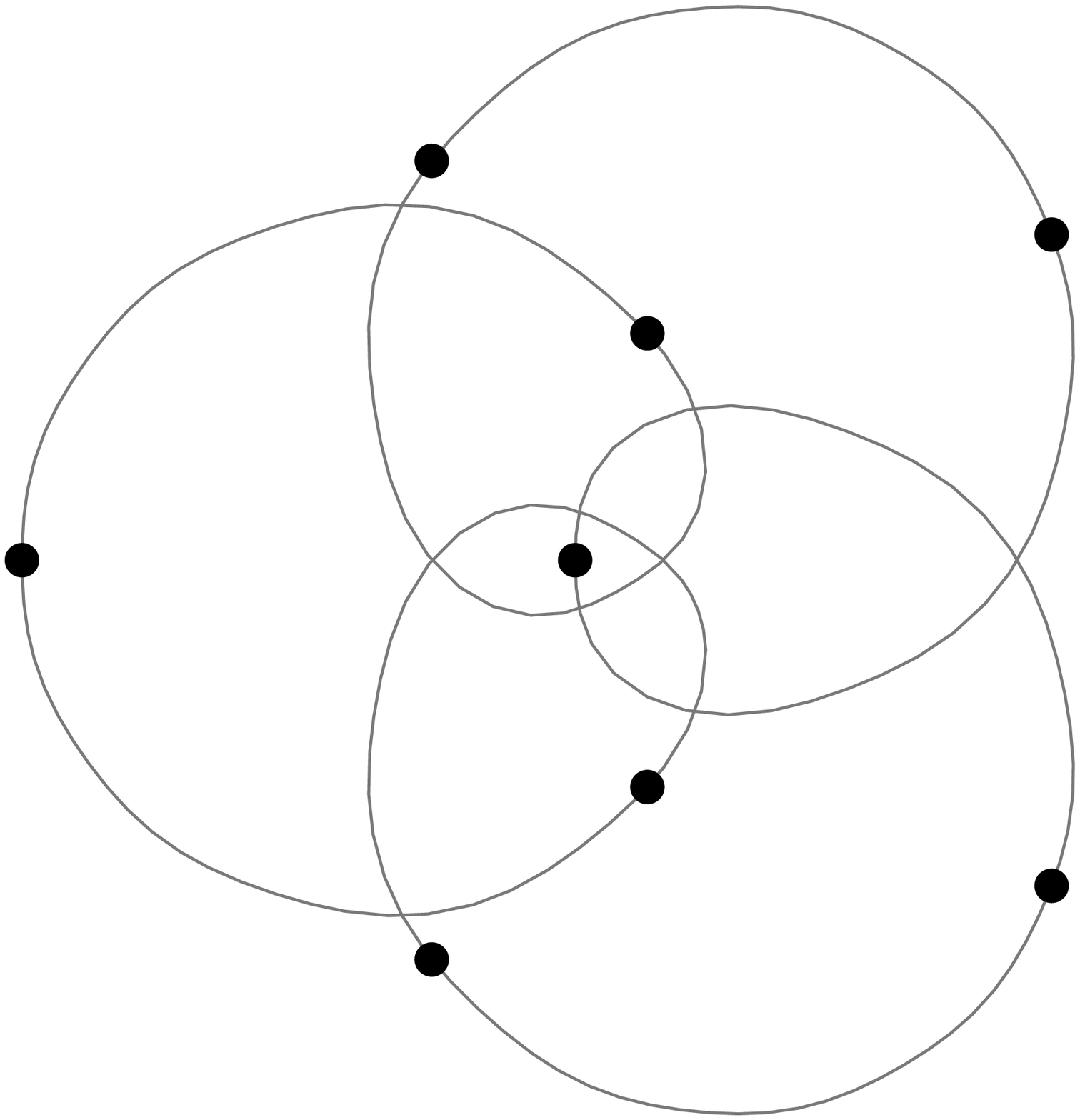}}
 \qquad 
\subfigure[$D(8,3)$]{\includegraphics[scale=0.16]{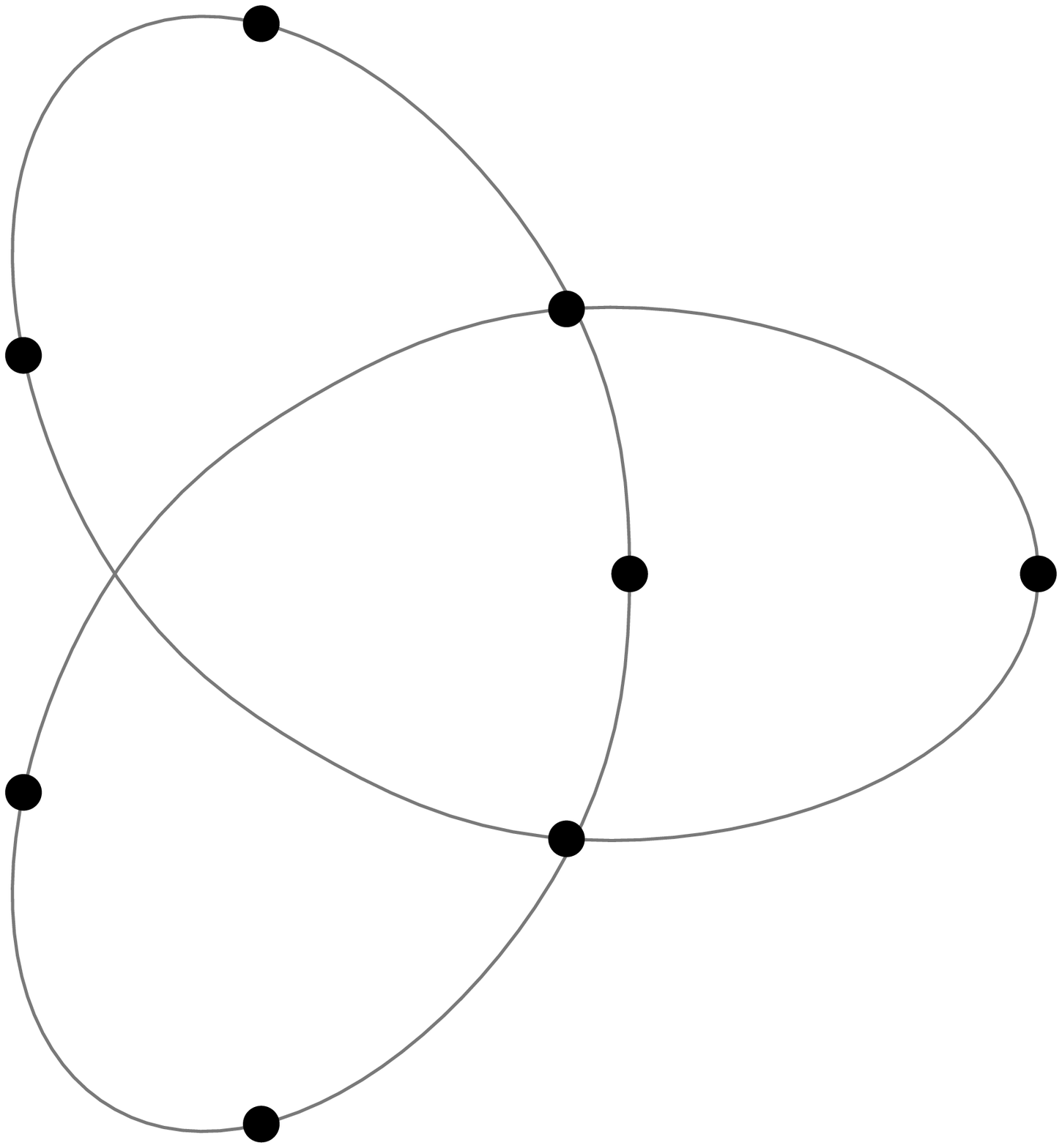}}
\\[12pt] 
\subfigure[$D(4,6)$]{\includegraphics[scale=0.16]{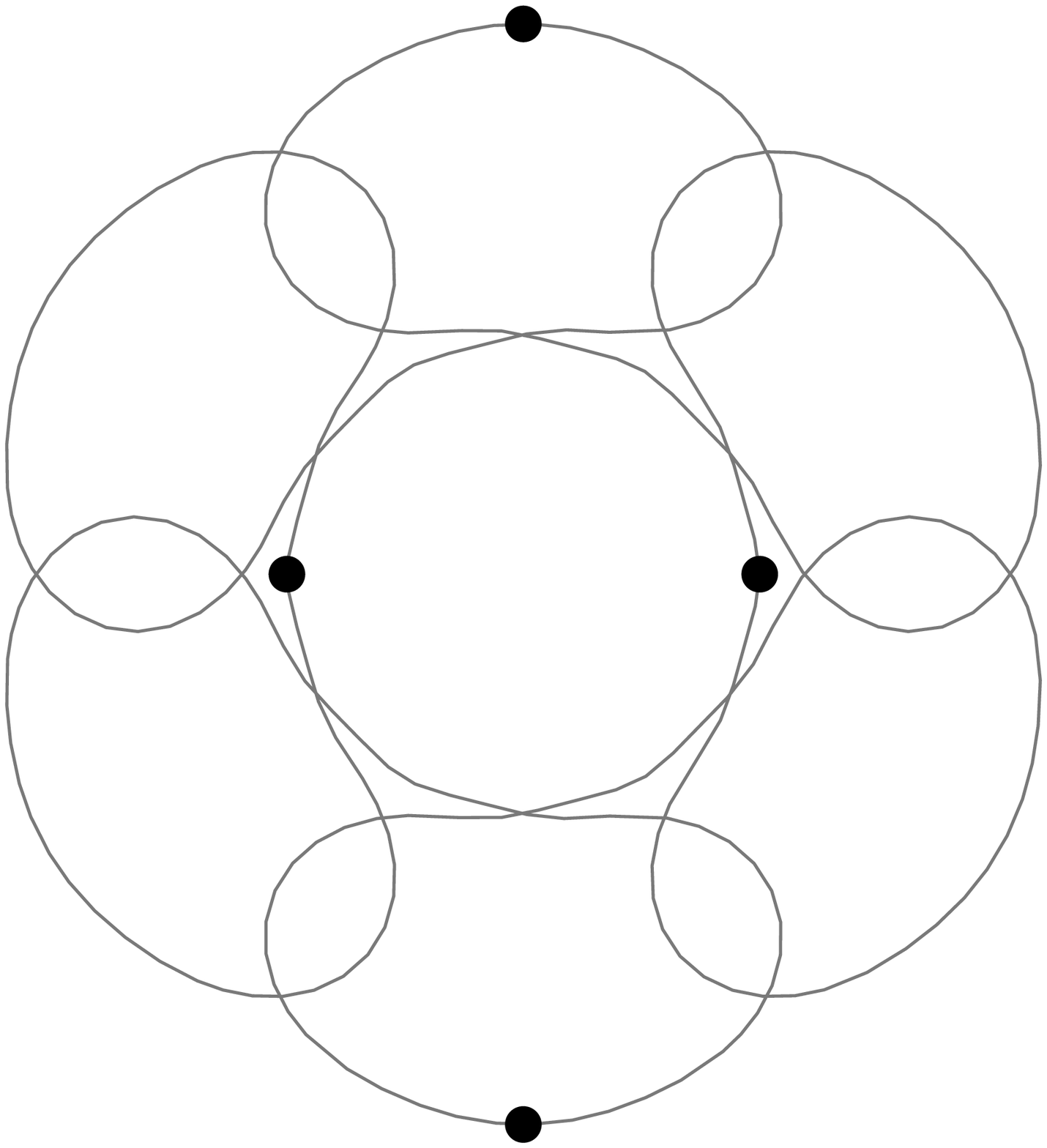}}
 \qquad
\subfigure[$D(5,8)$]{\includegraphics[scale=0.16]{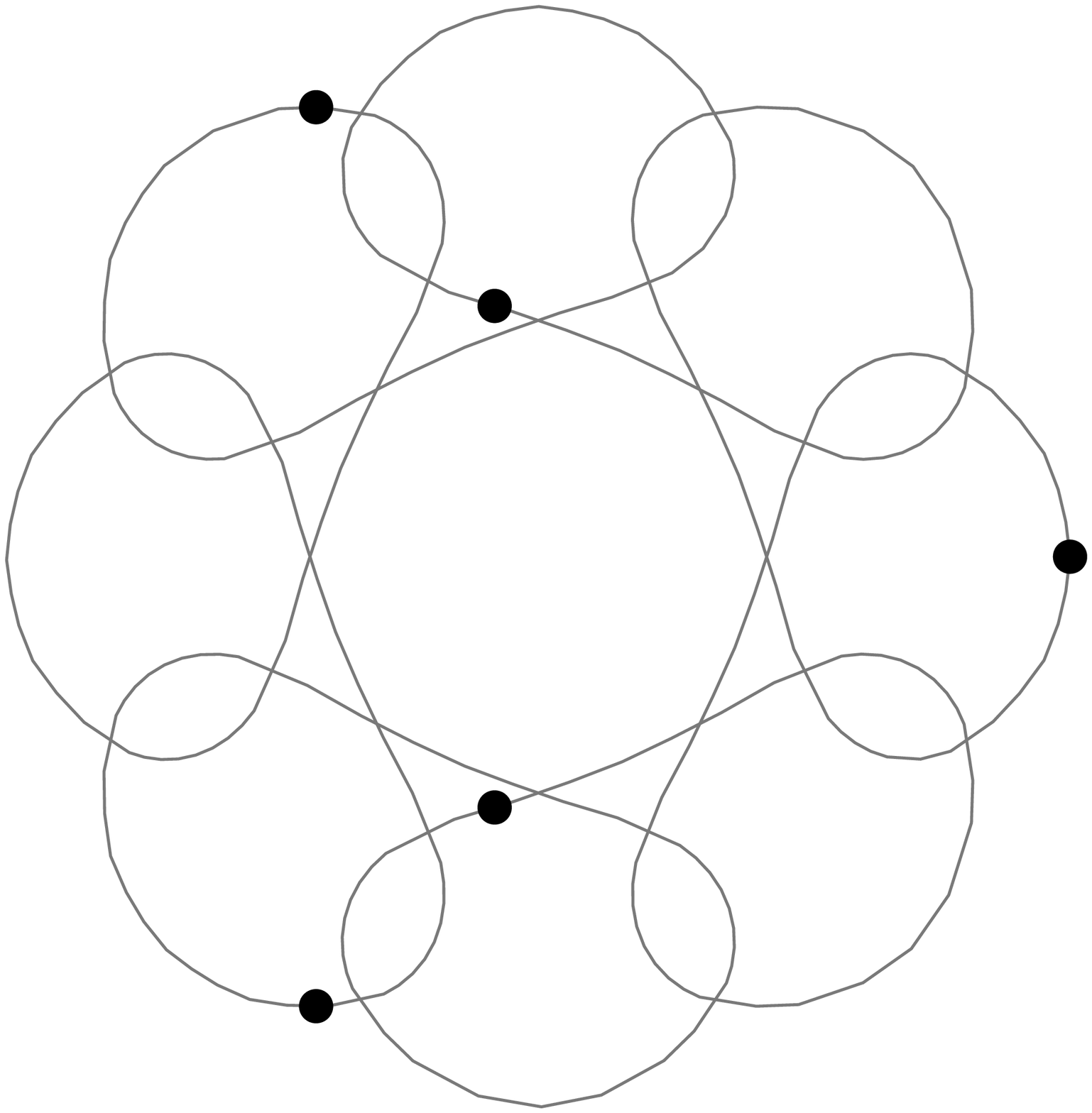}}
 \qquad
\subfigure[$D(8,9/2)$]{\includegraphics[scale=0.16]{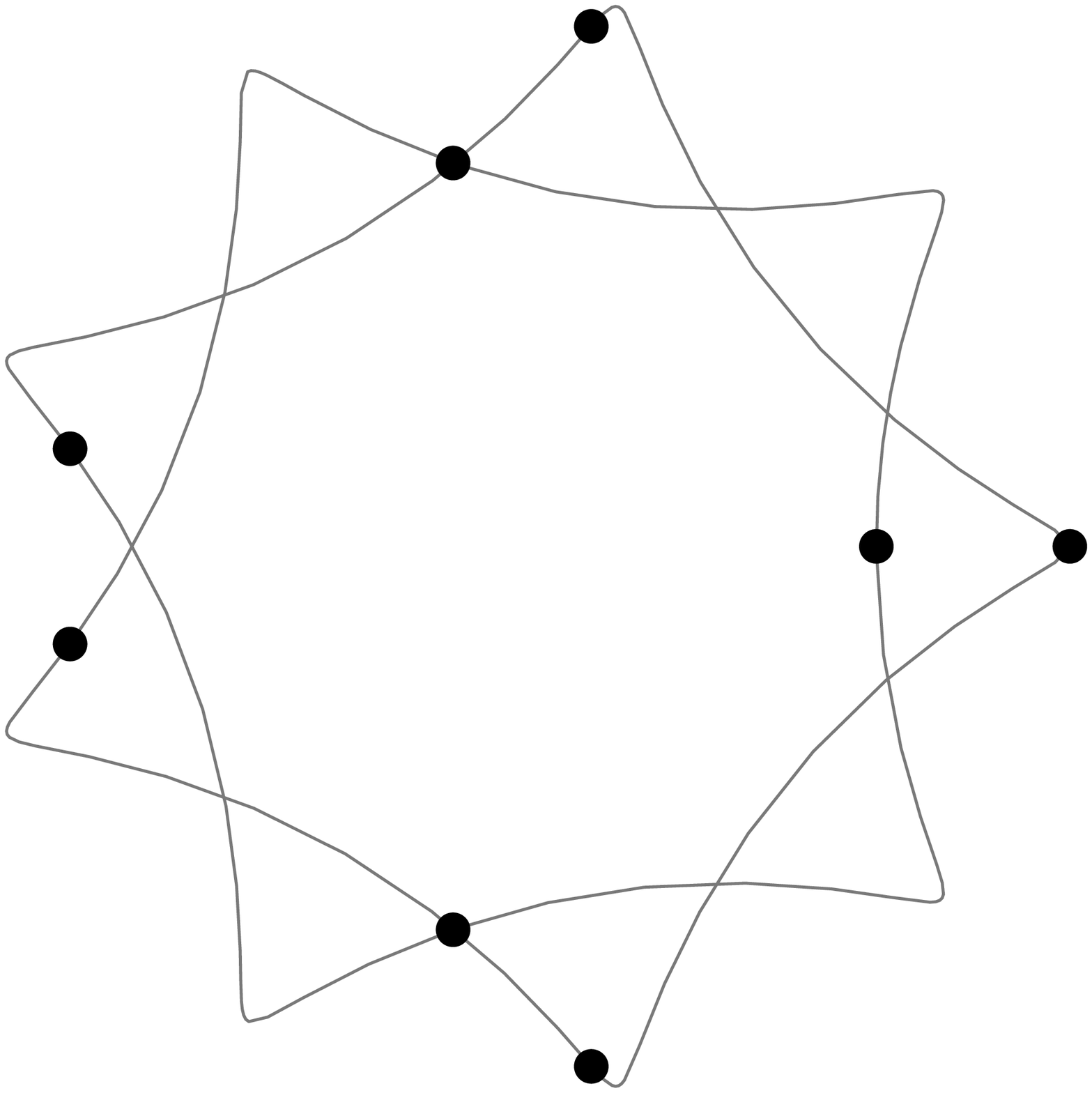}}
 \qquad 
\subfigure[$D(8,9/4)$]{\includegraphics[scale=0.16]{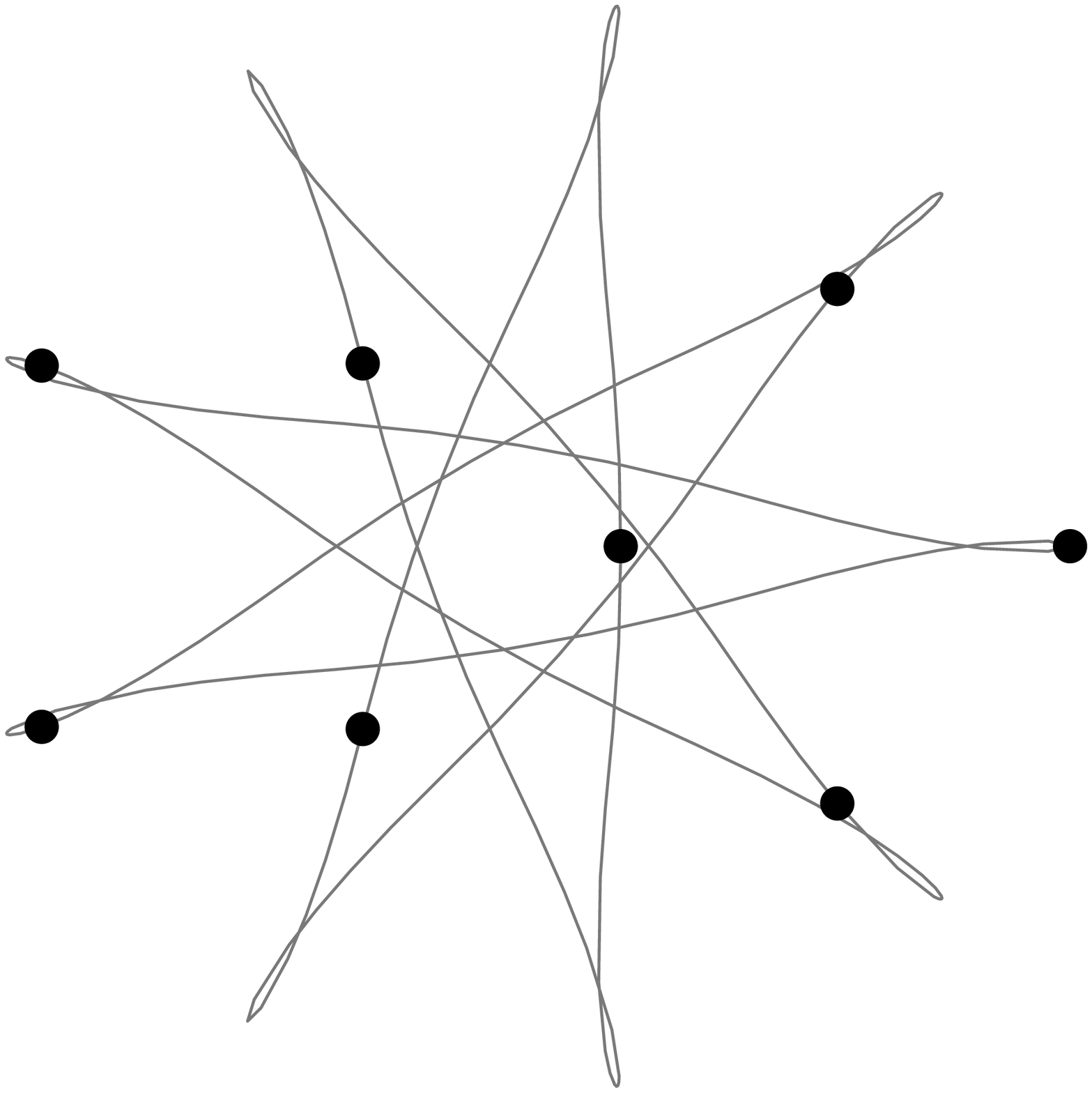}}
 \\[12pt]
\subfigure[$D(8,7)$]{\includegraphics[scale=0.16]{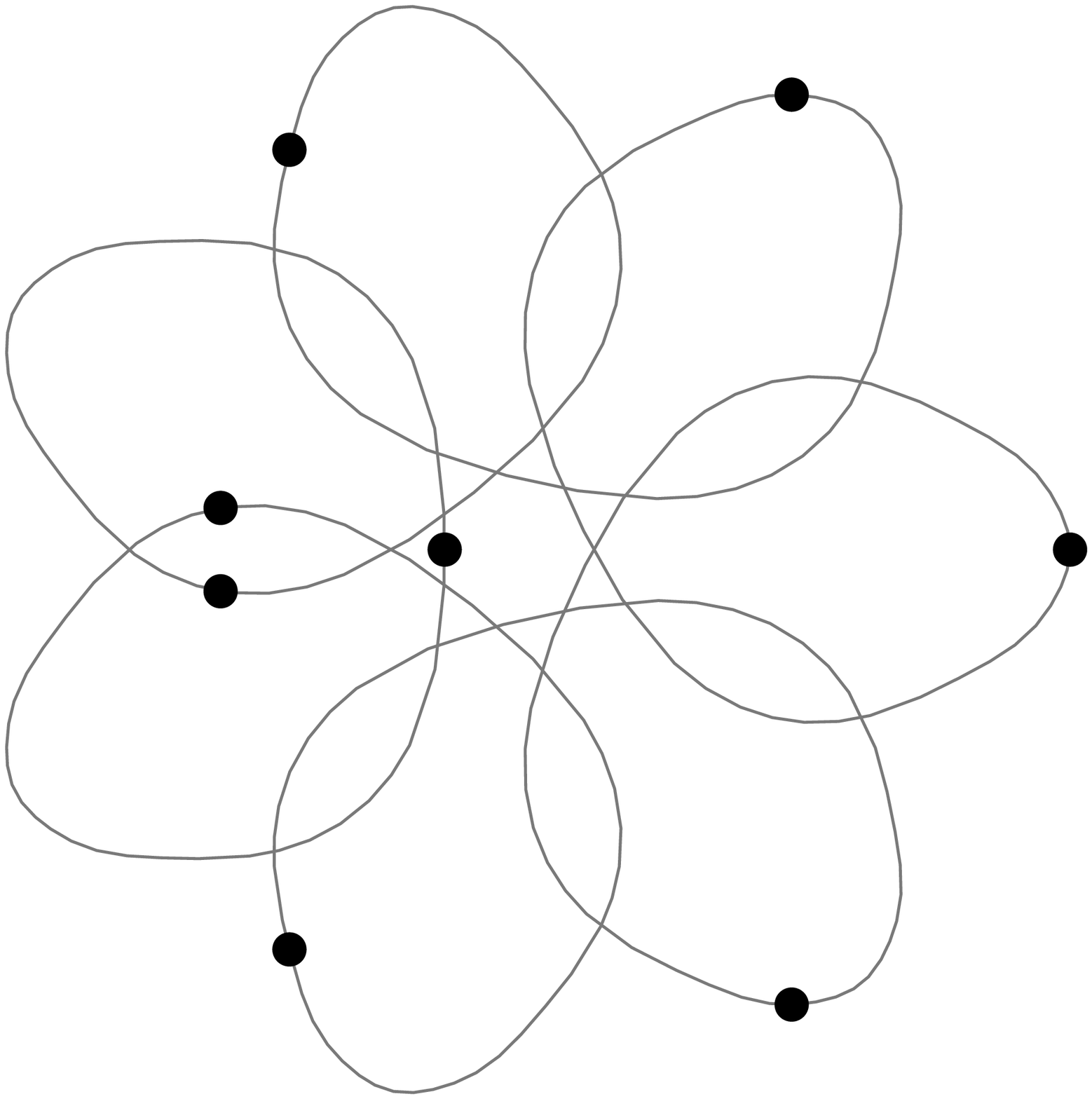}}
 \qquad 
\subfigure[$D(9,4)$]{\includegraphics[scale=0.16]{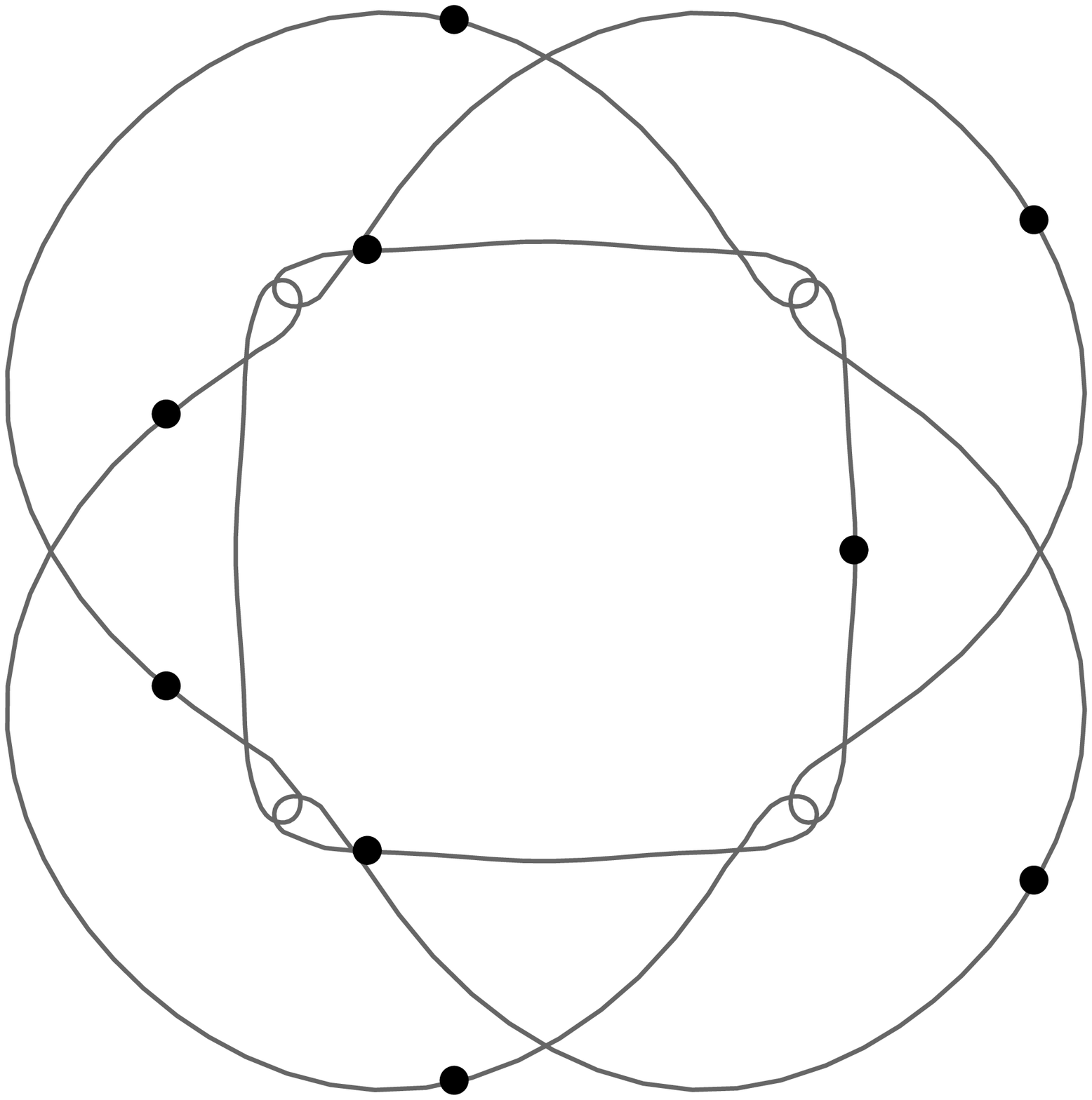}}
 \qquad 
\subfigure[$D(9,4)$]{\includegraphics[scale=0.16]{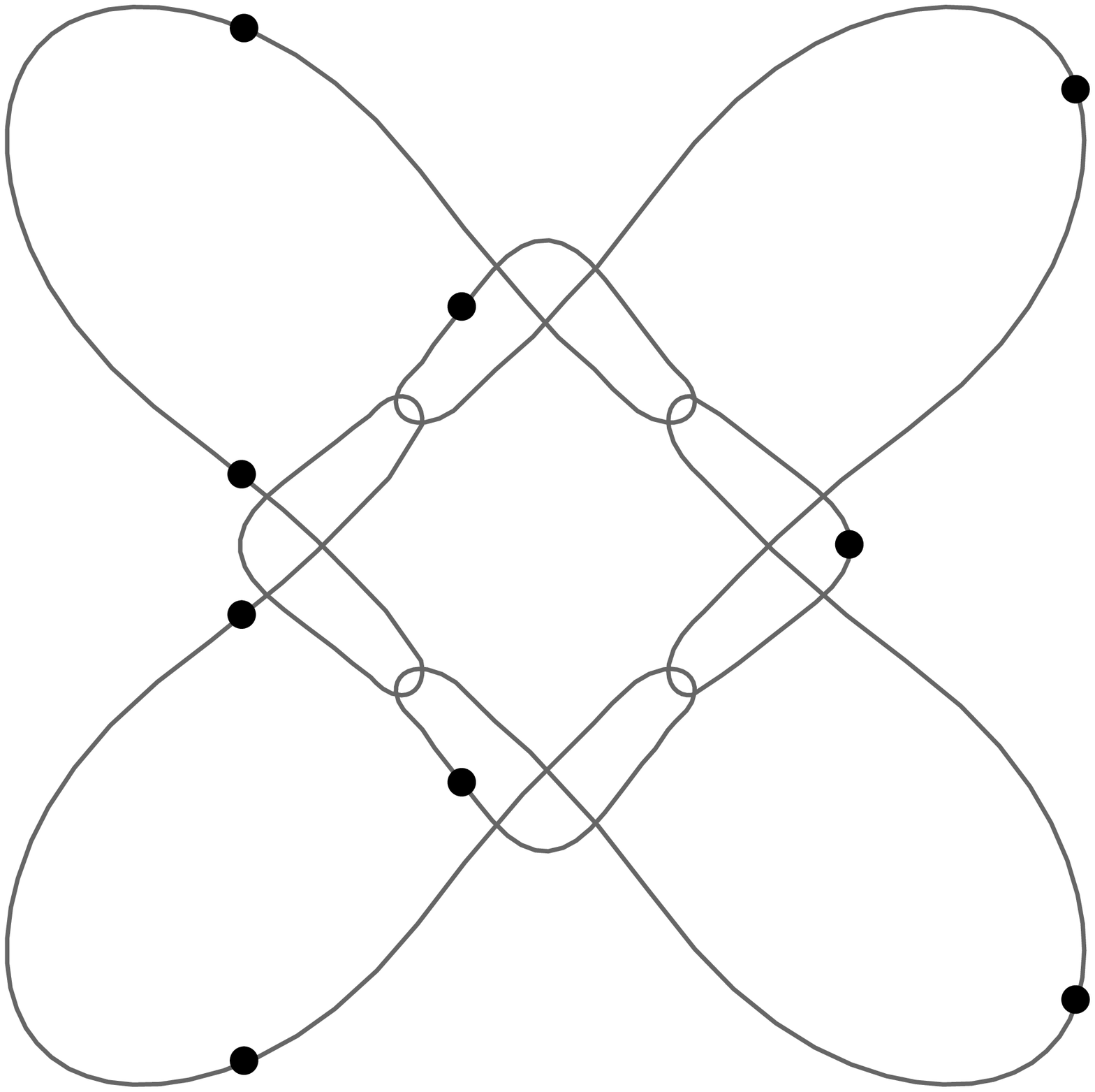}}
 \qquad 
\subfigure[$D(10,5/2)$]{\includegraphics[scale=0.16]{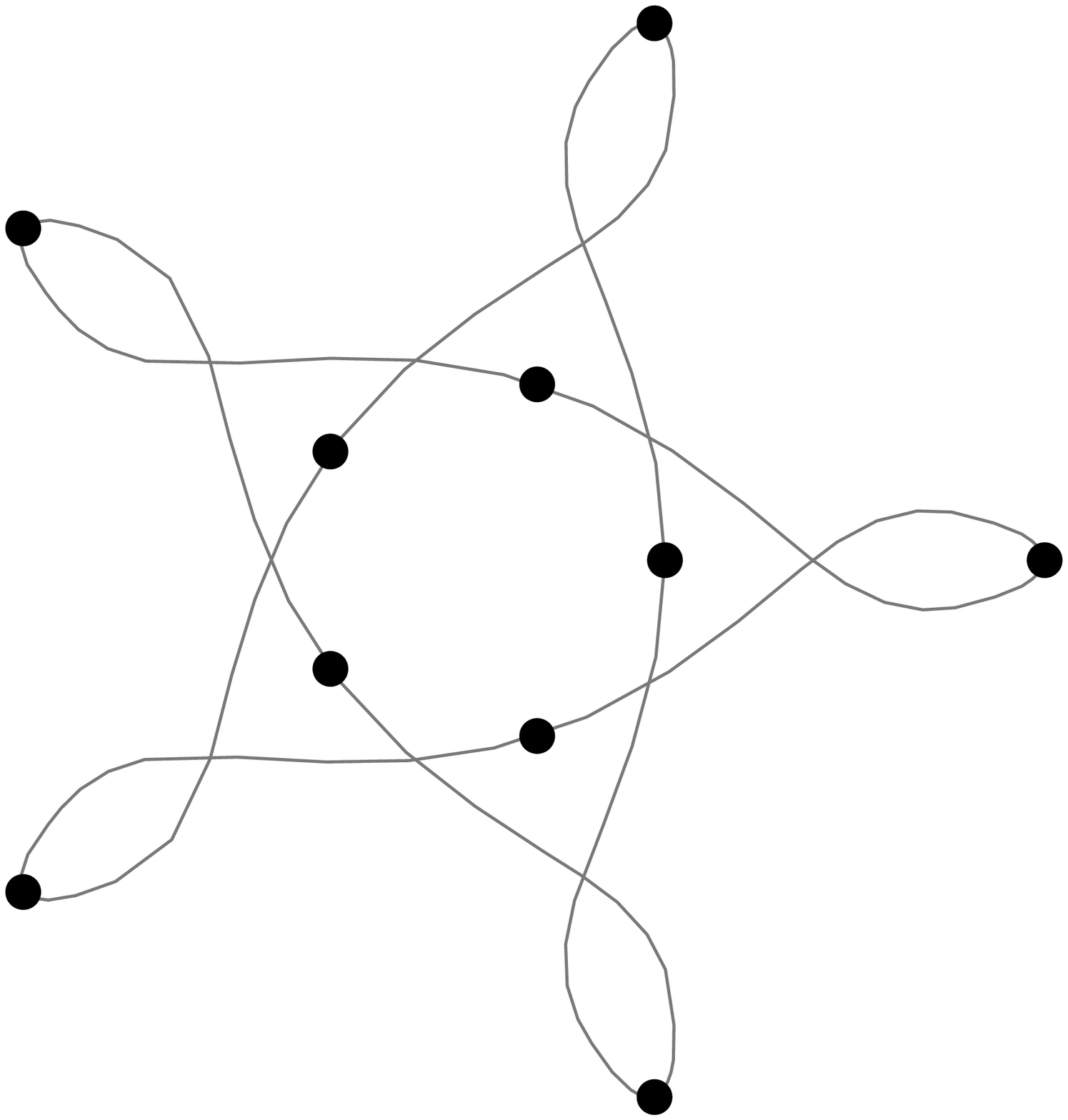}}
\end{center}
\caption{A selection of choreographies with their symmetry group.  See Remark~\,\ref{rmk:figures}.  These are all from the regular families of symmetry groups; for some examples illustrating the exceptional symmetry groups see Fig.~\ref{fig:exceptionals}. 
Where the same label appears for more than one figure, they correspond to different connected components of the corresponding $\Fix(G,\,\Lambda\Xn)$.
Animations can be viewed on the first author's website \cite{JMweb}.}
\label{fig:many choreographies}
\end{figure}

\subsection{Variational problem and topology of loop spaces}
\label{sec:variational}

The principal motivation for this paper is to apply the results to the variational problem describing the periodic motion of $n$ identical particles in the plane, interacting under a Newtonian potential. However, the Newtonian potential is renowned for its difficulty, as was evident even to Poincar\'e (in modern terms, because the action functional on the space of collision-free loops is not coercive). The proof of the existence of the figure-8 by Chenciner and Montgomery \cite{Fig8} uses some delicate arguments to show that the minimum of the action functional over the set of loops with the given symmetry cannot occur for a loop with collisions.  A different argument for avoiding collisions covering more general symmetry classes was given by Ferrario and Terracini \cite{FT04}, under the hypothesis that the symmetry group satisfy what they call the rotating circle condition (see Section\,\ref{sec:rcc} below).

Given an action of a (Lie) group $\Gamma$ on a manifold $X$ there is a natural action of $\Gamma\times\Sonehat$ on the loop space $\Lambda X$.  This action has been used a great deal in bifurcation theory, in particular for the Hopf bifurcation by Golubitsky and Stewart \cite{Golubitsky-Stewart_Hopf} and for the Hamiltonian Lyapunov centre theorem \cite{MRS88}.  However, it seems it has not been used as extensively, or as systematically, in variational problems.  The second half of this paper goes a little way to address this. 

Typically, one is looking for periodic solutions of a differential equation which can be expressed as a variational problem.  This includes the existence problem of closed geodesics, as well as periodic orbits for $n$-body problems and more general Lagrangian mechanical systems. Let $\A:\Lambda X\to\RR$ be the `action functional', whose critical points correspond to periodic solutions of a given fixed period, which we take to be 1, and assume it is invariant under the action of $\Gamma\times\Sonehat$. (This invariance occurs for example if the Lagrangian is invariant under the action of $\Gamma$ on $X$, or for the geodesic problem, if the metric is invariant under the $\Gamma$ action).  For each subgroup $G<\Gamma\times\Sonehat$ denote by $\A^G$ the restriction of $\A$ to $\Fix(G,\,\Lambda X)$. By Palais' principle of symmetric criticality \cite{Palais-PSC}, critical points of $\A^G$ coincide with critical points of $\A$ lying in $\Fix(G,\,\Lambda X)$, that is, to periodic solutions with spatio-temporal symmetry $G$.  

If the functional $\A^G$ is coercive, then it is guaranteed to achieve a minimum, and indeed a minimum on each connected component of $\Fix(G,\,\Lambda X)$.  Coercive means that for every sequence that has no point of accumulation (in the weak topology),  the functional tends to infinity, and it is a standard argument in variational calculus that provided the functional is lower semicontinuous and coercive then it achieves its minimum, see for example the book of Jost and Li-Jost \cite{Jost-Li}.  

For the geodesic problem on a compact Riemannian manifold $X$, the action functional, equal to the length of a loop, satisfies the Palais-Smale condition and is coercive, and the critical points are the closed geodesics \cite{Klingenberg}.   The topological techniques of this paper can be used to prove the existence of symmetric geodesics, so those satisfying
$u(t+\theta)=g\cdot u(t)$, for each $(g,\theta)\in G<\Gamma\times\Sone$. In particular, a symmetric geodesic is one that, as a curve in $X$, is invariant under those transformations of $X$ contained in the projection of $G$ to $\Gamma$.

For planar $n$-body problems, the space $X$ is $\Xn$ introduced above, which is not compact, nor even complete because of collisions, and separate arguments are required to deal with the two problems.  

Its completion $\overline{\Xn}\simeq\CC^{n-1}$ is not compact and the action functional is not coercive, as loops can move to infinity without the action increasing.  However, imposing restrictions on the types of loops considered can ensure coercivity of $\A$, and there are two types of restriction considered in the literature: topological and symmetry based. The topological constraints were introduced by Gordon \cite{Gordon-StrongForce} using the notion of \emph{tied} loops.  The symmetry approach was used in various ways by different authors and culminated in a beautifully simple result of Ferrario and Terracini \cite{FT04}, who showed that the restriction $\A^G$ of $\A$ to the subspace of loops in $\overline{\Xn}$ with symmetry $G$ is coercive 
if and only if $G<\Gamma\times\Sonehat$ is such that 
\begin{equation}\label{eq:isolated fixed point}
\Fix(G,\,\overline{\Xn})=\{0\}.
\end{equation}
This holds for a wide class of action functions $\A$, including the one derived from the Newtonian potential. For our purposes this condition \eqref{eq:isolated fixed point} holds for the choreography group $\Chor_n$, and a fortiori for any group containing $\Chor_n$ (the groups of our classification). 

There remains the issue of collisions. For the gravitational $1/r$ potential the action functional on $\Lambda\Xn$ fails to be coercive because, as was known to Poincar\'e, there are trajectories with collisions for which the action is finite.  Following Poincar\'e and others since, one can introduce the notion of a strong force (essentially with potential behaving like $1/r^a$ for $a\geq 2$ near collisions, rather than the Newtonian $1/r$), in which case a simple estimate shows that every loop with collisions has infinite action.  This idea was investigated by Gordon \cite{Gordon-StrongForce} where he combines it with his idea of tied loops in $\Lambda\Xn$ to ensure $\A$ is coercive on these connected components of $\Lambda\Xn$.  (See also the very interesting papers of Chenciner \cite{Chenciner-note-by-P,Chenciner-Poincare} describing the insights and contributions of Poincar\'e.)

This discussion leads to the following well-known result. 

\begin{theorem}\label{thm:SF}
Consider the $n$-body problem with a strong force potential and let $G$ be any subgroup of\/ $\Gamma\times\Sonehat$ containing $\Chor_n$.  Then in each connected component of\/ $\Fix(G,\,\Lambda\Xn)$ there is at least one choreographic periodic orbit of the system.
\end{theorem}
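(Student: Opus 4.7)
The plan is to apply the direct method of the calculus of variations on each connected component of $\Fix(G,\,\Lambda\Xn)$. Coercivity will come from the Ferrario--Terracini condition (which holds automatically for any $G$ containing $\Chor_n$), the strong force will rule out collisions in the limit, and Palais' principle of symmetric criticality will promote the resulting minimiser to a critical point of $\A$ on the full loop space, i.e.\ a periodic solution of the equations of motion.

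First I would verify coercivity. Because $G \supseteq \Chor_n$, one has $\Fix(G,\,\overline{\Xn}) \subseteq \Fix(\Chor_n,\,\overline{\Xn}) = \{0\}$ (the only centred configuration satisfying $z_{j+1}=z_j$ for all $j$), so condition~\eqref{eq:isolated fixed point} holds for $G$. By the theorem of Ferrario and Terracini recalled in Section~\ref{sec:variational}, the restriction $\A^G$ is therefore coercive on the $H^1$-completion of $\Fix(G,\,\Lambda\overline{\Xn})$.

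Now fix a connected component $\mathcal{C}$ of $\Fix(G,\,\Lambda\Xn)$ and let $(u_k)$ be a minimising sequence for $\A$ in $\mathcal{C}$. Coercivity yields a uniform $H^1$-bound, hence a subsequence satisfies $u_k \weakto u^*$ weakly in $H^1$, and the one-dimensional Sobolev embedding $H^1 \hookrightarrow C^0$ upgrades this to uniform convergence on $\bb{T}$. Weak lower semicontinuity gives $\A(u^*) \le \liminf \A(u_k) < \infty$, so the strong-force hypothesis, which assigns infinite action to every loop meeting the collision set, forces $u^*$ to be collision-free; thus $u^* \in \Fix(G,\,\Lambda\Xn)$. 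Since $u^*$ takes values in the open set $\Xn \subset \CC^n$ and $u_k \to u^*$ uniformly, for $k$ large the linear homotopy $(1-s)u_k + s\,u^*$ stays pointwise in $\Xn$; linearity of the $\Gamma\times\Sonehat$-action on $\CC^n$ ensures this homotopy also lies in $\Fix(G,\,\Lambda\Xn)$, so $u^*$ belongs to the same component $\mathcal{C}$ and realises $\inf_{\mathcal{C}} \A$. Palais' principle of symmetric criticality then makes $u^*$ a critical point of $\A$ on $\Lambda\Xn$, and since $\Chor_n < G$ it is automatically a choreography.

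The main obstacle that the hypotheses are crafted to overcome is preservation of the connected component under weak limits: a priori a minimising sequence could accumulate on a loop with collisions, which would lie on the topological boundary of $\Fix(G,\,\Lambda\Xn)$ inside $\Fix(G,\,\Lambda\overline{\Xn})$ and might sit in a different stratum from any of its approximants. The strong-force hypothesis is precisely what rules this out, reducing the argument to the standard direct method together with symmetric criticality.
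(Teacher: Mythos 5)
Your proposal is correct and follows essentially the same route as the paper, which simply cites Gordon's strong-force analysis and the Ferrario--Terracini coercivity criterion and notes that together they yield a minimiser of the action on each connected component of $\Fix(G,\,\Lambda\Xn)$. You have merely written out the details that the paper delegates to those references (coercivity from $\Fix(G,\overline{\Xn})=\{0\}$, the direct method, exclusion of collisions in the limit by the strong force, and the uniform-convergence/linear-homotopy argument keeping the limit in the same component), all of which are sound.
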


\begin{proof}
The strong force analysis by Gordon \cite{Gordon-StrongForce} and the coercivity result of Ferrario and Terracini \cite{FT04} mentioned above implies that each connected component of $\Fix(G,\,\Lambda\Xn)$ contains a local minimum of the action functional. This minimum is a periodic orbit, and necessarily a choreography since $G$ contains $\Chor_n$. 
\end{proof}

Connected components of the loop space $\Lambda\Xn$ correspond to (conjugacy classes of) pure braids and Montgomery  \cite{Montgomery-Braids} gives a very nice analysis of which components of $\Lambda\Xn$ are tied in Gordon's sense, in terms of the pure braids and their winding numbers. In the second half of this paper, we show that for each group $G<\Gamma\times\Sone$ (so not including time reversing symmetries which will be dealt with in a separate paper), the connected components of  $\Fix(G,\,\Lambda\Xn)$ are in 1--1 correspondence with certain conjugacy classes in the full braid group, and more precisely by the $P_n$-conjugacy classes in a certain coset in $B_n/P_n$, where $P_n$ is the pure braid group on $n$ strings (or twisted conjugacy classes in the case of $C'(n,2)$).  The precise formulation is given in Theorem~\ref{thm:connected components of choreographies}. In a sense, this can be seen as extending the work of Montgomery.

For the Newtonian (weak) potential, the action functional is not coercive at collisions, and the notion of tied loops does not apply as the set of collisions does not obstruct moving from one component of $\Lambda\Xn$ to another.  The proof by Chenciner and Montgomery \cite{Fig8} of the existence of the figure-8 solution involves showing that the minimum over all loops with collision is  greater than the action of a particular loop with the given symmetry (the class $D'(3,2)$ in our notation), and hence the minimum  over loops with that symmetry must be realized for a collision-free loop.  The important paper of Ferrario and Terracini \cite{FT04} gives a general perturbation argument, based on a technique of Marchal, showing that for many symmetry classes the minimum of $\A^G$ cannot be achieved at a trajectory with collisions. These symmetry classes are those satisfying their \emph{rotating circle condition}, a property we discuss in Sec.~\ref{sec:rcc}. 

However, even with the gravitational potential, the lack of coercivity does not of course imply that there is not a minimum on each connected component of $\Fix(G,\,\Lambda\Xn)$, and indeed numerics suggest that in many, or perhaps most, examples there are such minima (there is numerical evidence that on some connected components there is no minimum, see \cite{Simo01b,Simo01a}, but this evidence is also only numerical).  

In this paper, we make no claim to prove explicitly the existence of new choreographies for the Newtonian $n$-body problem, although we can make the following statement, which is an easy consequence of the results of Ferrario and Terracini.

\begin{theorem} Suppose $n$ is odd. Then for each of the symmetry types $D'(n,1)$ and $C'(n,2)$ there is a collision-free periodic orbit of the Newtonian $n$-body system with that symmetry. 
\end{theorem}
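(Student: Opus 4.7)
The plan is to apply the general existence framework of Ferrario and Terracini to the two exceptional symmetry types, together with the structural information about $D'(n,1)$ and $C'(n,2)$ assembled in Section~\ref{sec:classification}. The argument has the standard two parts: first produce a minimizer of $\A^G$ on some connected component of $\Fix(G,\Lambda\Xn)$, then exclude collisions.

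For existence, I would work with the $H^1$-completion of $\Fix(G,\Lambda\Xn)$ inside $\Lambda\overline{\Xn}$. Because $\Chor_n < G$ for both $G = D'(n,1)$ and $G = C'(n,2)$, we have
\[
\Fix(G,\overline{\Xn}) \;\subset\; \Fix(\Chor_n,\overline{\Xn}) \;=\; \{0\},
\]
so the condition \eqref{eq:isolated fixed point} is satisfied and the Ferrario--Terracini coercivity result applies to the Newtonian action $\A$ restricted to $\Fix(G,\Lambda\Xn)$. Combined with weak lower semicontinuity of the kinetic part and Fatou for the (non-negative) potential, this yields a minimizer of $\A^G$ on every connected component of the $H^1$-closure of $\Fix(G,\Lambda\Xn)$ that has finite action; at least one such component exists because, for instance, the Lagrange-type circular motion of Example~\ref{ex:circular motion} sits in the closure of the symmetry stratum for these odd-$n$ exceptional groups.

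The substantive step is to exclude collisions for the resulting minimizer, and for this I would verify the rotating circle condition of \cite{FT04} for both $D'(n,1)$ and $C'(n,2)$. Ferrario and Terracini's theorem then produces, via a Marchal-style local variation, a strictly lower-action competitor in $\Fix(G,\Lambda\Xn)$ whenever the minimizer contains a colliding sub-cluster, contradicting minimality. The verification itself is local: for every $t_0\in\bb{T}$ and every non-trivial isotropy subgroup $H$ of $G$ at $t_0$, one must exhibit a direction along which the cluster of particles meeting at the collision can be opened up equivariantly, i.e.\ one must show that the $H$-fixed configuration at $t_0$ lies on a rotating circle through the collision set. For the exceptional groups $D'(n,1)$ and $D'(n,2)$ the non-trivial isotropies involve an element that either reverses time without reflecting the plane or vice-versa, forcing a single fixed line in $\CC$, and the odd parity of $n$ guarantees that no subset of the $n$ particles can be collectively pinned to this line---this is exactly where the hypothesis ``$n$ odd'' is used. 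A similar check handles $C'(n,2)$, where the only extra generator beyond $\chor$ is a reflection combined with a half-period time shift.

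The main obstacle is the case analysis in the previous paragraph: one needs to enumerate the conjugacy classes of cyclic subgroups of $G$ whose projection to $\Sonehat$ is non-trivial, and for each determine the corresponding $\Fix$-stratum in $\overline{\Xn}$ to check that it is a ``rotating circle'' rather than a higher-dimensional collision-permitting subspace. Once the tables for $D'(n,1)$ and $C'(n,2)$ are in hand (which is a by-product of the classification in Section~\ref{sec:classification}), the rotating circle condition follows from a short calculation using the parity of $n$, and the theorem drops out.
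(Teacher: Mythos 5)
Your overall strategy is exactly the paper's: invoke the Ferrario--Terracini machinery (coercivity from $\Fix(G,\overline{\Xn})=\{0\}$, collision exclusion from the rotating circle condition) and then verify the rotating circle condition for the two groups; the paper does precisely this, delegating the verification to Proposition~\ref{prop:rcc}. So the route is the same, and the skeleton is sound.

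Two of your supporting claims are wrong, however. First, the circular choreography does \emph{not} lie in (the closure of) $\Fix(G,\Lambda\Xn)$ for $G=D'(n,1)$ or $C'(n,2)$: these groups are not subconjugate to $D(n,\infty/\ell)$ (see Section~\ref{sec:lattice}, where it is noted that the generator of $C'(n,2)$ is not conjugate to any element of $D(n,\infty/\ell)$; and any loop with $D'(n,1)$ symmetry must pass through the origin by Remark~\ref{rmks:symmetry groups}(7), which the circular motion does not). Non-emptiness of the fixed-point sets is still easy --- e.g.\ the figure-eight curve with $n$ particles, or any loop built from the Fourier conditions of Table~\ref{table:Fourier} --- but your particular witness fails. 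Second, you assert that the rotating circle condition holds for ``the exceptional groups $D'(n,1)$ and $D'(n,2)$''; in fact $D'(n,2)$ \emph{fails} the rotating circle condition, because it contains the element $(-\kappa,\,s_1\sigma_2,\,\overline{1/2n})$ whose spatial part is a reflection fixing a time instant, violating the requirement $\rho(G_t)<\SO(2)$ (this is exactly the point of the second half of Proposition~\ref{prop:rcc}). Your heuristic --- ``a single fixed line plus odd parity prevents pinning'' --- does not capture condition (1) of the rotating circle condition and would not distinguish $D'(n,1)$ from $D'(n,2)$. The correct checks are: for $C'(n,2)$ every $G_t$ is trivial; for $D'(n,1)$ the only nontrivial stabilizer is generated by $(R_\pi,s_1,\zerobar)$, whose planar part is a rotation and whose permutation part $s_1$ fixes exactly one index when $n$ is odd, which is where the parity hypothesis actually enters. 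Since the theorem only concerns $D'(n,1)$ and $C'(n,2)$, these slips do not sink the argument, but the rotating-circle verification needs to be done as in Proposition~\ref{prop:rcc} rather than as you sketched it.
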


It is possible that the choreographies in question are those where $n$ particles move around a figure-8 curve---however to our knowledge it has not been shown that these minimize the action for the given symmetry type.

\begin{proof}
Ferrario and Terracini \cite{FT04} prove that for any symmetry satisfying the rotating circle condition there is a collision-free minimum in the set of loops with that symmetry.  We show in Proposition~\ref{prop:rcc} that the symmetry groups in question do satisfy this property.
\end{proof}

Note that the groups $C(n,k/\ell)$ also satisfy the rotating circle condition, but in that case it is known that the circular choreography minimizes the action \cite{BT04}.  

For the 3-body problem, many choreographies are known (numerically), almost all of which have just reflectional symmetry (many with one axis, and some with 2 such as the figure 8).  A new possibility raised here are the symmetry types $D(3,k)$ with $k>2$.  A particular case is the choreography of ``3 particles on a Celtic knot'', depicted in Figure~\ref{fig:D(3,4)},  with $D(3,4)$ symmetry.  While a motion similar to the figure exists for the strong force, by Theorem~\ref{thm:SF}, it would be particularly interesting to know if it exists for the Newtonian attraction. 

We have hitherto not been specific about exactly which space of loops we use.  For variational calculus one needs the Sobolev space $H^1(\bb{T},\Xn)$ with its usual topology, while for the topological part one uses continuous loops with the compact-open topology.  It is proved in \cite{Klingenberg} that the spaces are homotopic, and the argument can be adapted to show that the homotopy can be chosen to respect the  action of $\Gamma\times\Sonehat$.  Thus connected components of $\Fix(G,\,C^0(\bb{T},\Xn))$ correspond to those of $\Fix(G,\,H^1(\bb{T},\Xn))$, and the corresponding components are homotopy equivalent.

\paragraph{Organization} The paper is organized as follows.  In Section~\ref{sec:classification} we describe the symmetry groups arising for planar choreographies and some of their properties and state the classification theorem, Theorem~\ref{thm:classification}.  The proof of the theorem is the subject of Section~\ref{sec:main proof}.

The remainder of the paper investigates the topology of the space of choreographic loops with a given symmetry. Section~\ref{sec:components of loops space} describes a general approach to the question for the action of any Lie group $\Gamma$ on a manifold $X$.  We introduce the notion of `equivariant fundamental group' which can be used to compute the connected components and their fundamental group of the spaces of symmetric loops. These ideas are applied in Section~\ref{sec:choreography components} to the question for choreographies.  Finally, Section~\ref{sec:adjacencies} provides a method to describe which components of the space of loops with symmetry $G$ contain loops with symmetry group strictly greater than $G$, an important issue in applying variational techniques. In sections \ref{sec:components of loops space}--\ref{sec:adjacencies} we consider only the action of $\Gamma\times\Sone$, rather than allowing time-reversing symmetries in $\Gamma\times\Sonehat$.  Subgroups with time-reversing symmetries will be considered in a forthcoming paper.

\begin{remark} \label{rmk:figures} 
The figures showing choreographies are provided to illustrate the symmetry types of choreographies, and the differences between different connected components of the space of loops with a given symmetry type (a number of examples of this are shown in Fig.~\ref{fig:many choreographies}, and compare also Figs~\ref{fig:core} and \ref{fig:core2}).  The choreographies are all found using \textsc{Maple} or \textsc{Matlab} by a method of steepest descent to minimize the action for the $n$-body problem with Newtonian potential, although in most cases there is no guarantee such a solution exists. The method is applied on a space of finite Fourier series with coefficients satisfying conditions corresponding to the symmetry group in question, as described in Section~\ref{sec:Fourier}. Consequently, if there does exist a solution in a particular connected component of the space of loops with a given symmetry type, then it is reasonable to expect the solution to resemble the corresponding figure. Animations of the figures are available on the first author's website \cite{JMweb}; the programming for the animations was created by Dan Gries. 
\end{remark}

It has been observed by others before that it is not difficult to create numerical examples of choreographies by taking a parametrized curve with several self-intersections and placing on it a large number of particles at regular intervals, and then decreasing the action until it is minimized (using a computer programme of course). On the other hand, it appears to be much more of a challenge to produce examples where there are very few particles compared with the degree of symmetry of the curve, such as the examples $D(3,4),\; D(4,6)$ and $D(5,8)$ illustrated  in Figs~\ref{fig:many choreographies} and \ref{fig:D(3,4)}.  It would be particularly interesting to prove existence results for these examples.

\section{Classification of symmetry types}
\label{sec:classification}

\subsection{Notation}
\label{sec:notation}

Here we introduce some notation for certain subgroups and elements that will be useful throughout the paper. The symbol $n$ always denotes the number of particles, and we assume $n\geq3$.

\begin{itemize}
\item We denote by $R_{\theta}\in \SO(2)$ the rotation of the plane through an angle $\theta$ and by $\kappa\in\OO(2)$ the reflection in the horizontal axis. In complex coordinates, $R_\theta$ is multiplication by $\ee^{\ii\theta}$ and $\kappa$ is complex conjugation. We occasionally use $\kappa_\theta$ to denote reflection in the line at an angle $\theta$ with the horizontal (so $\kappa_0=\kappa$ and $R_\theta\,\kappa = \kappa_{\theta/2}$). 
\item Let $S_n$ denote the symmetric group on $n$ letters, with identity element denoted $e$, and consider two particular subgroups. Firstly, denote by $\sigma_1\in S_n$ the cycle $\sigma_1=(1\;2\;3\;\dots\; n)$ of order $n$, so $\sigma_1(j)=j+1\bmod n$, and denote by $\Sigma_n<S_n$ the cyclic subgroup generated by $\sigma_1$. Secondly, let $s_1$ be the order 2 permutation which fixes the element $1$ and reverses the numerical order: $s_1=(2\; n)(3\;(n-1))\cdots$, so $s_1(j) = 2-j\bmod n$, and denote by $\Sigma_n^+<S_n$ the dihedral group of order $2n$ generated by $\sigma_1$ and $s_1$.
\item For $k$ coprime to $n$ we denote by $\sigma_k\in\Sigma_n$ the unique permutation satisfying $\sigma_k^k=\sigma_1$. That is, $\sigma_k(j)=j+k'$ where $kk'\equiv 1\bmod n$. 
\item Recall that $\bb{T}$ is the circle $\RR/\ZZ$.  For $\theta\in\Sone$ (the group, also $\RR/\ZZ$) denote the transformation $t\mapsto \theta+t$ of $\bb{T}$ simply by $\theta$, and the reversing transformation $t\mapsto \theta-t$ by $\bar\theta$. Thus $0$ denotes the identity, and $\bar{0}$ the reflection about $t=0$. The reflection $\bar\theta$ is the reflection that fixes the points $\theta/2$ and $\theta/2 + 1/2$.
\item The choreography element is $\chor=(I,\sigma_1,-\nfrac{1}n)\in \Gamma\times\Sone$, and the choreography subgroup $\Chor_n$ is the cyclic subgroup of order $n$ generated by $\chor$.
\end{itemize}

With this notation, the symmetry group of the speed-$\ell$ circular choreography with $n$ particles described in Example~\ref{ex:circular motion}, is  generated by 
\begin{equation}\label{eq:speed ell}
(I,\sigma_1,-\nfrac{1}n),\;(R_{2\pi\ell\theta},e,\theta),\;(\kappa,s_1,\zerobar),
\end{equation}
with $\theta\in\Sone$.  

Since the space of choreographies is $\Fix(\Chor,\,\Lambda(\Xn))$, there is a natural action of the normalizer of $\Chor_n$ in $\Gamma\times\Sonehat$ on this space. This normalizer is in fact equal to $\OO(2)\times\Sigma_n^+\times\Sonehat$.  In the next proposition we show that the isotropy subgroup of any choreography is contained in this group: if collisions were allowed this would not be the case.

\subsection{Basic properties}

Associated to any choreography $u$ is its symmetry group $G$, and the projections $\rho,\sigma,\tau$ of $G$ to $\SO(2),S_n$ and $\Sonehat$ respectively. We now give a few basic properties of these projections.

\begin{proposition}\label{prop:kernels}
Let $u$ be a choreography with symmetry group $G<\Gamma\times\Sonehat$. Then,
\begin{enumerate}
\item $\ker\tau\cap\ker\rho=\ker\tau\cap\ker\sigma=\trivial$. 
\item $\ker\tau, \,\ker\sigma$ and $\ker\rho$ are cyclic groups. Indeed, $\tau(\ker\rho)$ and $\tau(\ker\sigma)$ are subgroups of\/ $\Sone$, while $\rho(\ker\tau)$ is a subgroup of\/ $\SO(2)$.
\item If\/ $\ker\rho\cap\ker\sigma\neq\trivial$ then the curve is multiply covered. Indeed, if 
$$\left|\ker\rho\cap\ker\sigma\right|=\ell>1$$
 then $u(t/\ell)$ is also a choreography, albeit with a different ordering of the particles.
\item  $\Chor_n$ is a normal subgroup of\/ $G$, and $\sigma(G)<\Sigma_n^+$.
\end{enumerate}
\end{proposition}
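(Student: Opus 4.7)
My overall strategy is to use the collision-free hypothesis repeatedly as the main tool: whenever a would-be symmetry forces two indices to give the same trajectory, or forces the common trajectory into a lower-dimensional set, an intermediate value / continuity argument produces a collision, contradicting the standing assumption.

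For \emph{Claim 1}, I would take $(I,\sigma,0)\in\ker\tau\cap\ker\rho$; the equation $(I,\sigma,0)\cdot u=u$ reduces to $z_{\sigma^{-1}(j)}(t)=z_j(t)$ for all $j,t$, so if $\sigma\neq e$ then any pair $j,\sigma^{-1}(j)$ with $\sigma^{-1}(j)\neq j$ collides at \emph{every} time. For $\ker\tau\cap\ker\sigma$, an element $(A,e,0)$ fixing $u$ means each trajectory lies in $\Fix(A)\subset\CC$. If $A\in\SO(2)\setminus\{I\}$ then $\Fix(A)=\{0\}$ and all particles sit at the origin; if $A$ is a reflection then $\Fix(A)$ is a line and all particles move along that line, but the choreography relation $z_{j+1}(t)=z_j(t+1/n)$ on a 1--dimensional curve forces collisions (consecutive particles must pass through each other, since the continuous function $z_j(t)-z_k(t)$ is real-valued and, by the choreography relation, must vanish for some $t$).

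For \emph{Claim 2}, I first use Claim 1 to observe that the three coordinate projections give injective maps $\rho\colon\ker\tau\hookrightarrow\OO(2)$ (so $\ker\tau$ is finite since it also injects into $S_n$), $\tau\colon\ker\rho\hookrightarrow\Sonehat$, and $\tau\colon\ker\sigma\hookrightarrow\Sonehat$. To upgrade the targets to $\SO(2)$ and $\Sone$ respectively, I would rule out reflections/reversals. If $(A,\sigma,0)\in\ker\tau$ with $A$ a reflection, the choreography condition forces $\sigma=\sigma_1^c$ and $Az_1(s)=z_1(s+c/n)$; squaring gives $z_1$ periodic of period $2c/n$, so either $z_1$ is multiply covered (collisions of pairs of particles at the same point) or $n$ is even and $c=n/2$, a glide-reflection. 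In the glide-reflection case, the closed curve $z_1(\bb{T})$ is $A$-invariant and connected, hence must meet the axis $L=\Fix(A)$; at any crossing time $s_0$, the relation $z_1(s_0)=Az_1(s_0)=z_1(s_0+\nfrac12)$ produces a collision between particles $1$ and $1+n/2$. An analogous (easier) argument handles reversing elements in $\ker\rho$ and $\ker\sigma$: both cases force the image of $z_1$ to be symmetric under some involution of $\bb{T}$, so $z_1$ factors through a segment and the trajectory is $1$-dimensional, which again gives collisions. With these inclusions, $\ker\tau$ is a finite subgroup of $\SO(2)$, hence cyclic, while $\ker\rho,\ker\sigma$ inject into $\Sone$, so are (topologically) cyclic as closed subgroups of the circle.

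For \emph{Claim 3}, I would observe that $\ker\rho\cap\ker\sigma$ consists of elements $(I,e,\tau)$ with $\tau\in\Sone$ (by Claim 2), and $u(t-\theta)=u(t)$ means $u$ has period $\theta$. If the order is $\ell$, then $u$ has period $1/\ell$, and the rescaled loop $t\mapsto u(t/\ell)$ is a loop of period $1$ that traces the underlying curve once; the choreography relation $z_{j+1}(t)=z_j(t+\nfrac1n)$ becomes $z'_{j+1}(t)=z'_j(t+\ell/n)$, which is again a choreography relation after relabeling (using any $k$ with $k\ell\equiv 1\bmod n$, which exists because collision-freeness already forces $\gcd(\ell,n)=1$).

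For \emph{Claim 4}, I would compute $g\chor g^{-1}$ directly for $g=(A,\sigma,\tau)\in G$: the $\rho$-component is $I$, the $\sigma$-component is $\sigma\sigma_1\sigma^{-1}$, and the $\tau$-component is $-1/n$ or $+1/n$ according to whether $\tau$ is non-reversing or reversing (reversing conjugation in $\Sonehat$ flips the sign of a translation). The key step is then to identify this element: writing out what it means for $(I,\sigma\sigma_1\sigma^{-1},\mp 1/n)\in G$ to fix $u$ and comparing with the choreography relation yields $z_{\sigma\sigma_1\sigma^{-1}(j)}(t)=z_{\sigma_1^{\pm 1}(j)}(t)$ for all $j,t$, so by no-collision $\sigma\sigma_1\sigma^{-1}=\sigma_1^{\pm 1}$. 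This shows $g\chor g^{-1}\in\{\chor,\chor^{-1}\}\subset\Chor_n$ (normality), and since elements of $S_n$ that conjugate $\sigma_1$ to $\sigma_1^{\pm 1}$ form precisely the dihedral normalizer $\Sigma_n^+$ (the only nontrivial conjugator to $\sigma_1^{-1}$ being, up to powers of $\sigma_1$, the involution $s_1$), we conclude $\sigma(G)<\Sigma_n^+$.

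The main obstacle is the second paragraph: excluding reflections in $\rho(\ker\tau)$ is the only place one needs a genuinely topological (rather than algebraic) collision argument, and the glide-reflection subcase forces one to invoke connectedness of the underlying curve to produce the crossing with the reflection axis. Everything else reduces cleanly to the no-collision principle once the algebra is set up.
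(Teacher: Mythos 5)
Your claims 1, 3 and 4 are correct and follow essentially the paper's route: everything is reduced to the no-collision principle, and in claim 4 the paper simply multiplies $\alpha\chor\alpha^{-1}$ by $\chor^{\pm1}$ and invokes part (1) rather than re-expanding the defining relation, which is the same argument. In claim 2 your treatment of $\rho(\ker\tau)$ is a genuinely different but valid route: the paper avoids the reduction to $\sigma=\sigma_1^c$ and the glide-reflection case split entirely by noting that the (connected, barycentre-at-origin) curve must meet $\Fix(\kappa)$ and evaluating $z_{\sigma(i)}(t_0)=\kappa z_i(t_0)=z_i(t_0)$ at a crossing time with $\sigma(i)\neq i$, which gives the collision in one line.

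The genuine gap is your dismissal of reversing elements of $\ker\sigma$. An element $(A,e,\bar\theta)\in\ker\sigma$ gives $z_j(\theta-t)=Az_j(t)$, which only makes the \emph{image} of $z_1$ invariant under $A$; it does not force $z_1$ to factor through a segment, and the trajectory need not be $1$-dimensional --- for instance $z(t)=\ee^{2\pi\ii t}$ satisfies $z(-t)=\kappa z(t)$ while its image is the whole circle. So the step ``the trajectory is 1-dimensional, which again gives collisions'' fails in this case (it is fine for $\ker\rho$, where $A=I$ is forced and the relation really is $z_1(\theta-t)=z_1(t)$). The conclusion is still true, but it needs a different mechanism: either the paper's algebraic trick, namely that $h=(A,e,\zerobar)$ gives $h\chor h^{-1}\chor=(I,\sigma_1^2,0)$, contradicting part (1) since $n>2$; or, in your spirit, write the relation for consecutive indices and use $z_{j+1}(t)=z_j(t+1/n)$ to deduce that the underlying curve has period $2/n$, hence is multiply covered, so particles $j$ and $j+2$ collide. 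Either repair is a line or two, but as written this sub-case does not go through.
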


A simple example where $\ker\rho\cap\ker\sigma\neq\trivial$ is the circular choreography with speed $\ell$ for $\ell>1$, see Example~\ref{ex:circular motion}, where $\ker\sigma\cap\ker\rho\simeq\ZZ_\ell$.

\begin{proof}
(1) Let $\alpha \in \ker\tau\cap\ker\rho$, so $\alpha = (I, \sigma, 0)$ for some $\sigma$. Then $z_{\sigma(i)}(t) = z_i(t)$, for all $i,t$. This is not possible without collisions, and so we must have $\sigma=e$.

Now let $\alpha' \in \ker\tau\cap\ker\sigma$, so $\alpha' = (A, e, 0)$ for some $A\in\OO(2)$. Then $Au(t) = u(t)$ for all $t$. If $A$ is a rotational symmetry, then all $z_i$ are at the origin for all $t$, leading to collisions, and if $A$ is a reflection, $z_i(t) \in \RR$ for all $i$, which also implies collisions. Hence $A$ must be trivial.

(2) Assume $\tau(\ker\rho) \not< \Sone$. Then, after possibly reparametrizing time, there exists some $\alpha\in S_n$ such that $g = (I, \alpha, \overline{0})\in\ker\rho$. For this $g$, $z_{\alpha(j)}(-t) = z_j(t) \forall j,t$. If $\alpha(j) \neq j$, then there will be a collision at $t=0$. So we must have $\alpha=e$ and hence $g \in \ker\rho \cap \ker\sigma$, i.e. $g = (I, e, \overline{0})$. This implies $z_i(-t) = z_i(t)$ for all $i,t$ and so all the particles move in both directions on the same curve. This system has collisions, so is not allowed. 

Now assume $\tau(\ker\sigma)$ is not a subgroup of $\Sone$, and consider $h = (A, e, \overline{0}) \in \ker\sigma$. Then $h\chor h^{-1}\chor = (I, \sigma_1^2,0)$ which is not possible by part (1) (we assume $n>2$, so $\sigma_1^2\neq e$). Hence  $\tau(\ker\sigma)$ must be a subgroup of $\Sone$.

Now assume $\rho(\ker\tau)$ is not a subgroup of $SO(2)$; that is, there exists some element $k = (\kappa, \sigma, 0) \in \ker\tau$ (up to conjugacy), and we can assume $\sigma\neq e$ by part (1). 
Since the centre of mass is at the origin and all particles follow the same curve, this curve must intersect $\Fix(\kappa)=\RR$. Suppose $i$ is such that $\sigma(i)\neq i$, and let $t_0\in\bb{T}$ be such that $z_1(t_0)\in\RR$. The symmetry element $k$ then guarantees that $z_{\sigma(i)}(t_0) = \kappa z_i(t_0)=z_i(t_0)$ ---a collision! Hence  $\rho(\ker\tau)$ must be a subgroup of $SO(2)$. 

Then we have that $\ker\sigma$ and $\ker\rho$ must be cyclic since their images under $\tau$ are cyclic (for $\ker\tau$, its image under $\rho$) and their kernels under these maps are trivial.

(3) If $\ker\rho \cap \ker\sigma$ is not trivial, then it must be cyclic as it is contained in $\ker\rho$. Then there exists $(I, e, \nfrac{1}{\ell}) \in G$, so we have $u(t+\nfrac{1}{\ell}) = u(t)$ for all $t$, so $u$ is $\ell$-times covered. Moreover, let $\tilde{u}(t) = u(\nfrac{t}{\ell})$ for $t \in [0,1]$. This satisfies $\tilde{u}(t+1) = \tilde{u}(t)$, and so it is 1-periodic, and $\tilde{u}$ has choreography symmetry $(I, \sigma_\ell, \nfrac{1}{n})$. Indeed, set $s=\nfrac{t}{l}$, then $\tilde{z}_j(s+\nfrac{1}{n}) = z_j(t + \nfrac{1}{n\ell}) = z_{\sigma_\ell(j)}(t) = \tilde{z}_{\sigma_\ell(j)}(s)$. 

(4)  Let $\alpha = (A, \pi, a) \in G$ and conjugate $\chor$ by $\alpha$:
$$\chor^\alpha= (I,\, \pi\sigma_1 \pi^{-1},\, \pm\nfrac{1}{n}).$$
The product $\pi \sigma_1 \pi^{-1} := \sigma'$ will be of the same cycle type as $\sigma_1 = (1 \ldots n)$, namely a cycle of length $n$. 
Combine this element $\chor^\alpha = (I, \sigma', \pm\nfrac{1}{n})\in G$ with $\chor$ or its inverse as needed, depending on the sign of the $\tau$ component, to obtain $(I, \sigma'\sigma_1^{\pm 1}, 0)\in G$. Since the intersection $\ker\tau\cap\ker\rho$ is trivial by part (1), we must have that  $\sigma'\sigma_1^{\pm 1} = e$ or $\sigma' =\sigma_1^{\mp1}$. Then, $\chor^\alpha = (I, \sigma_1^{\pm1}, \mp\nfrac{1}{n})$ and hence is in $\Chor_n$, and so $\alpha$ is in the normalizer of $\Chor_n$ as required.  Finally since $\sigma_1^\pi = \sigma_1^{\pm1}$ the only possibility is that $\pi\in\Sigma_n^+$ (other elements of the normalizer of $\Sigma_n^+$ conjugate $\sigma_1$ to other generators of $\Sigma^+$). 
\end{proof}

\begin{corollary}\label{cor:kerrhochoreo}
Let $u$ be a choreography with symmetry $G$. Then either the kernel of $\rho$ is the choreography group $\Chor_n$, or the choreography is multiply covered.
\end{corollary}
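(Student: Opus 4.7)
The plan is to prove the contrapositive: if the choreography $u$ is not multiply covered, then $\ker\rho = \Chor_n$. The inclusion $\Chor_n \subseteq \ker\rho$ is automatic because the generator $\chor = (I, \sigma_1, -\nfrac{1}{n})$ has trivial $\OO(2)$-component, so the real work is establishing the reverse inclusion.

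First I would take an arbitrary $g \in \ker\rho$. The key preliminary observation is Proposition~\ref{prop:kernels}(2), which states $\tau(\ker\rho) < \Sone$: this rules out a time-reversing component and lets me write $g = (I, \pi, s)$ with $\pi \in S_n$ and $s \in \Sone$. This fact is what makes the subsequent conjugation calculation behave nicely, since $\Sone$ is abelian.

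Next I would exploit the normality of $\Chor_n$ in $G$ from Proposition~\ref{prop:kernels}(4). A direct computation, using that $\Sone$ is abelian and that $g$ has trivial $\OO(2)$-part, gives
$$ g\chor g^{-1} = (I,\, \pi\sigma_1\pi^{-1},\, -\nfrac{1}{n}). $$
For this to equal some $\chor^k = (I, \sigma_1^k, -\nfrac{k}{n})$, the $\Sone$-component forces $k \equiv 1 \pmod n$, and hence $\pi\sigma_1\pi^{-1} = \sigma_1$. Since the centralizer of an $n$-cycle in $S_n$ is the cyclic subgroup it generates, $\pi \in \Sigma_n$, say $\pi = \sigma_1^j$ for some $j$.

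Finally, I would form the product $g\chor^{-j} = (I, e, s + \nfrac{j}{n})$, which lies in $\ker\rho \cap \ker\sigma$. If $u$ is not multiply covered then Proposition~\ref{prop:kernels}(3), in its contrapositive form, forces $\ker\rho \cap \ker\sigma = \trivial$, so $s = -\nfrac{j}{n}$ in $\Sone$ and therefore $g = \chor^j \in \Chor_n$. The only subtle step is the invocation of Proposition~\ref{prop:kernels}(2) at the very start: without it, $g$ could carry a time-reversing component, the conjugation would yield $\pi\sigma_1\pi^{-1} = \sigma_1^{-1}$ rather than $\sigma_1$, and $\pi$ could be a dihedral-type element outside $\Sigma_n$, breaking the identification of $g$ with a power of $\chor$. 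So the crux of the argument is really to leverage that part of the previous proposition right away.
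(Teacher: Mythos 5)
Your argument is correct and complete, but it follows a noticeably different route from the paper's. The paper takes an element $g=(I,\alpha,\nfrac{1}{r})\in\ker\rho\setminus\Chor_n$, forms $m=\mathrm{lcm}(r,n)=an$, raises a suitable combination $h=(I,\beta,-\nfrac{1}{m})$ to the $a^{\mathrm{th}}$ power, and compares with $\chor$ using the triviality of $\ker\tau\cap\ker\rho$ (Proposition~\ref{prop:kernels}(1)) to force $\beta^a=\sigma_1$; the power $h^n=(I,e,-\nfrac1a)$ then gives the nontrivial element of $\ker\rho\cap\ker\sigma$. You instead invoke the normality of $\Chor_n$ in $G$ (Proposition~\ref{prop:kernels}(4)) and the fact that the centralizer of an $n$-cycle in $S_n$ is the cyclic group it generates, to conclude directly that the permutation part of any $g\in\ker\rho$ is a power $\sigma_1^j$, after which $g\chor^{-j}\in\ker\rho\cap\ker\sigma$. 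Both proofs lean on parts (2) and (3) of Proposition~\ref{prop:kernels} in the same way --- part (2) to exclude a time-reversing component (as you rightly flag, otherwise conjugation would send $\sigma_1$ to $\sigma_1^{-1}$ and admit dihedral $\pi$), and part (3) to convert a nontrivial element of $\ker\rho\cap\ker\sigma$ into multiple covering. What your version buys is a cleaner handling of the time component: you never need to assume $\tau(g)$ is of the special form $\nfrac1r$, nor introduce the lcm bookkeeping, since the conjugation computation works for arbitrary $s\in\Sone$ and the arithmetic is absorbed into the single identity $s=-\nfrac{j}{n}$ at the end. The paper's version, by contrast, exhibits the multiple-covering element $(I,e,-\nfrac1a)$ more explicitly, which connects directly to the covering degree in Proposition~\ref{prop:kernels}(3).
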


\begin{proof}
Firstly, note that $\Chor_n < \ker\rho$, by its definition.

Assume $\ker\rho \neq \Chor_n$. Then there exists an element $g = (I, \alpha, \nfrac{1}{r})$ in $G\setminus\Chor_n$. We know from the proposition that the $\tau$ component of $g$ must not be time reversing, since the spatial component is trivial and hence there cannot be a reversal of the direction of motion.

Let $m=\mathrm{lcm}(r,n)$ so that $m>n$. Combination of elements gives $h=(I, \beta, -\nfrac{1}{m})$. We know $m$ is a multiple of $n$, so set $m=an$, and then $h^a = (I, \beta^a, -\nfrac{1}{n})$.

Multiplying by $\chor^{-1}$ gives $(I, \sigma_1^{-1}\beta^{a}, 0)$ which cannot occur by Proposition \ref{prop:kernels}, except in the case where $\beta^a = \sigma_1$, or equivalently $\beta = \sigma_a$. In this case, we have $h=(I, \sigma_a, -\nfrac{1}{m})$ and $m=an$, so we obtain an element of $\ker\rho \cap \ker\sigma$, and hence the curve is multiply covered by Proposition \ref{prop:kernels}.
\end{proof}

\subsection{Symmetry types}
\label{sec:symmetry types}

\begin{figure}
\centering
\subfigure[$D(6,5)$]{\includegraphics[scale=0.2]{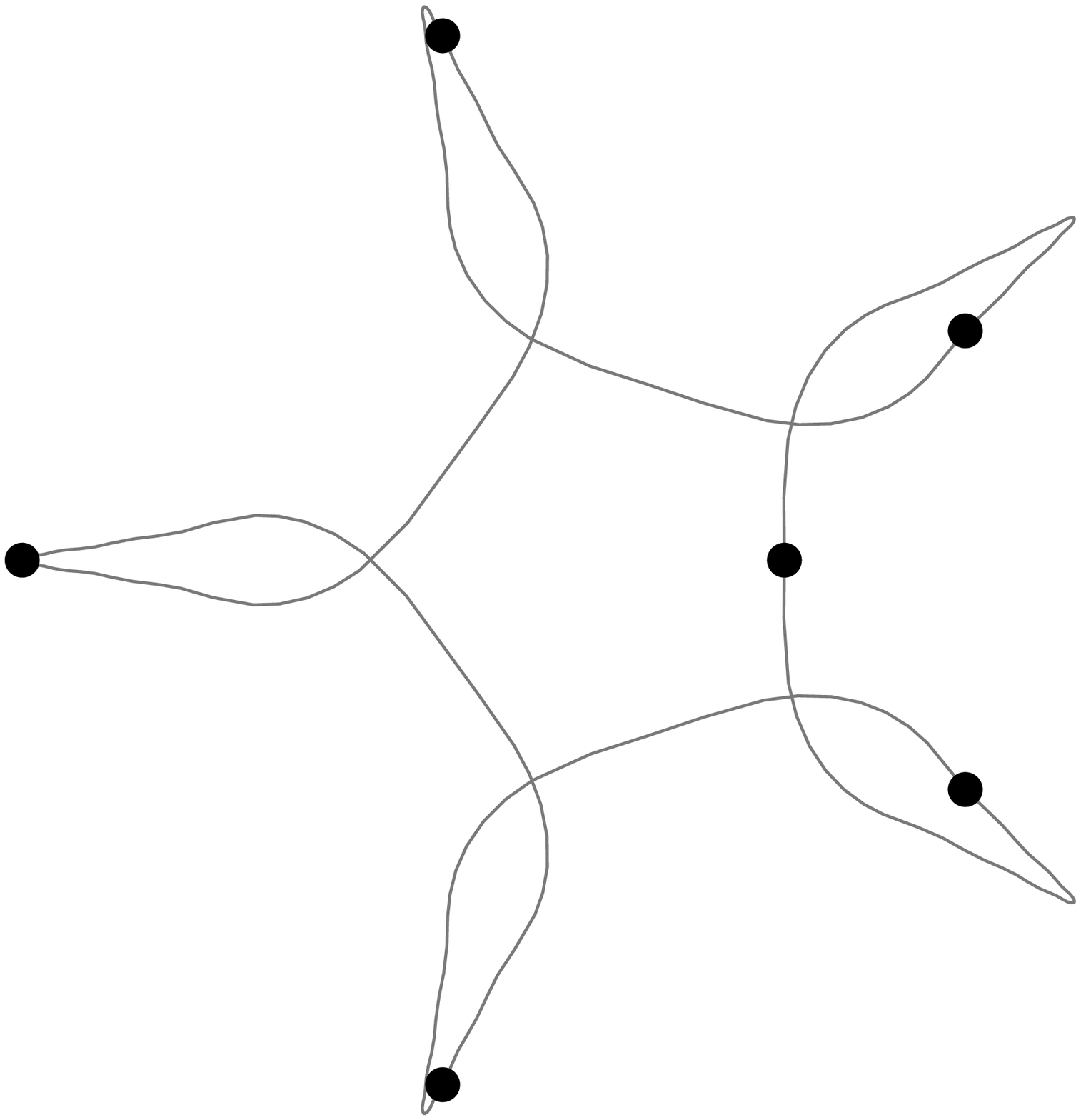}}
 \qquad \qquad
\subfigure[$D(6,5/2)$]{\includegraphics[scale=0.2]{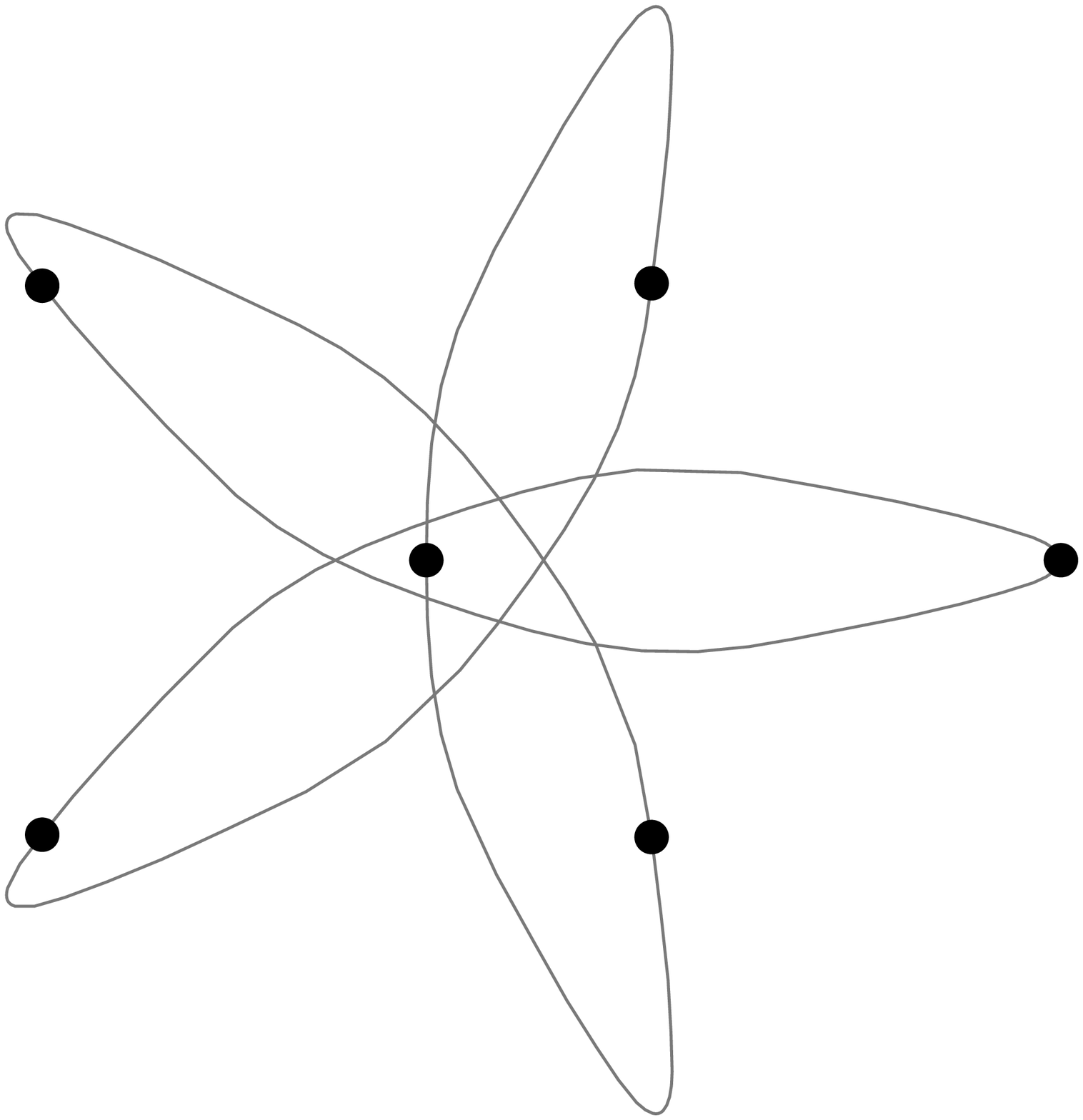}}
\caption{Comparison of different values of $\ell$: $D(6,5)$ and $D(6,5/2)$ is the difference between the symmetries of a pentagon and a pentagram.}
\label{fig:ell comparison}
\end{figure}

We define some finite subgroups of $\Gamma\times\Sonehat$, which turn out to be all possible finite symmetry types of planar choreographies (Theorem~\ref{thm:classification} below); a \emph{symmetry type} is a conjugacy class of symmetry groups, or isotropy subgroups.  As usual, let $n$ denote the number of particles and assume $n\geq 3$ and recall that $\chor=(I,\,\sigma_1,\, -\nfrac{1}n)$.  There are two regular families and three exceptional subgroups, the latter only for odd values of $n$.

The two regular families are:
\begin{description}
\item[$C(n,k/\ell)$] with $k\geq 1$, $\ell\geq 1$ coprime to $k$;  this is the subgroup generated by $\chor$ and $g_0=(R_{2\pi\ell/k},e,\nfrac1k)$. If $\ell=1$ we write this as $C(n,k)$.  Clearly $g_0$ and $\chor$ commute, so $C(n,k/\ell)\simeq \bb{Z}_n\times \bb{Z}_k$, which is cyclic if $n$ and $k$ are coprime. In particular, $C(n,1)=\Chor_n$, and if $k>1$ we can restrict to  $1\leq\ell < k/2$ ($\ell=0$ would contradict Proposition~\ref{prop:kernels}, and see Remarks~\ref{rmks:symmetry groups} below).  

\item[$D(n,k/\ell)$] with the same conditions on $k,\ell$: this is generated by the two generators $\chor$ and $g_0$ of $C(n,k/\ell)$ together with the `reflection' $(\kappa,s_1,\bar 0)$ (see Sec.~\ref{sec:notation} for the notation).  The group is isomorphic to $(\bb{Z}_n\times\bb{Z}_k)\rtimes\bb{Z}_2$, which is an index 2 subgroup of $D_n\times D_k$ and of order $2nk$. Indeed,
$$D(n,k/\ell) \simeq \bigl\{(g,h) \in D_n \times D_k \mid g, h\textrm{ are both reflections or both rotations}\bigr\}.$$
 If $\ell=1$ we write this as $D(n,k)$.  In particular $D(n,1) = \Chor_n\rtimes\bb{Z}_2\simeq D_n$, the dihedral group of order $2n$.
\end{description}

The three exceptional subgroups, arising only for odd $n$, are,
\begin{description}
\item[$C'(n,2)$]  --- the cyclic group of order $2n$ generated by $g=(\kappa,\,\sigma_2,\, -\nfrac{1}{2n})$. Notice that $g^2=\chor$, and $g^n=(\kappa,e,\nfrac12)$.
\item[$D'(n,1)$] --- the dihedral group of order $2n$ generated by $\chor$ and the `reflection' $(R_\pi,s_1,\zerobar)$.
\item[$D'(n,2)$]  --- the dihedral group of order $4n$ generated by $(\kappa,\, \sigma_2,\, -\nfrac{1}{2n})$ and $(R_\pi,s_1,\zerobar)$.  This group contains both $C'(n,2)$ and $D'(n,1)$ as index-2 subgroups.
\end{description}

Note that when dealing with general statements, writing $C(n,k/\ell)$ includes the case $\ell=1$, and clearly $C(n,k/1)=C(n,k)$, and similarly $D(n,k/1)=D(n,k)$ (as described in the introduction). See Figs~\ref{fig:many choreographies}, \ref{fig:ell comparison} and \ref{fig:D(3,4)} for examples of choreographies with symmetry $D(n,k/\ell)$.

We are now ready to state the classification theorem.  Recall that the circular choreographies are defined in Example~\ref{ex:circular motion} and their symmetry is given in \eqref{eq:speed ell}.  Consistent with the notation for symmetry groups above, we denote the symmetry group of the circular choreographies by $D(n,\infty/\ell)$.

\begin{theorem} \label{thm:classification}
Let $n\geq3$. The symmetry group of any planar $n$-body choreography is either that of a circular choreography $D(n,\infty/\ell)$ or is conjugate to one of the symmetry groups  $C(n,k/\ell)$, $D(n,k/\ell)$ and if $n$ is odd, $C'(n,2)$, $D'(n,1)$ and $D'(n,2)$.
\end{theorem}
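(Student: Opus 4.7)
The key observation driving the proof is that, by part (1) of Proposition~\ref{prop:kernels}, the projection $(\rho,\tau): G \to \OO(2)\times\Sonehat$ is \emph{injective}, since its kernel consists of elements $(I,\sigma,0)$ which force $\sigma=e$. Thus $G$ is isomorphic to its image $H := (\rho,\tau)(G)$, a finite subgroup of $\OO(2)\times\Sonehat$, and the $\sigma$ component is recovered from a homomorphism $\sigma:H\to\Sigma_n^+$ (part (4) of the same proposition). The plan is to (i) classify the finite subgroups $H<\OO(2)\times\Sonehat$ compatible with the constraints on the kernels, and then (ii) determine which $\sigma$-lifts yield a bona fide symmetry group of a choreography.

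For step (i), first fix the rotation subgroup $H_0 := H\cap(\SO(2)\times\Sone)$, which is abelian and must contain the image $(I,-\tfrac1n)$ of $\chor$. By Corollary~\ref{cor:kerrhochoreo}, modulo replacing a multiply-covered motion with its underlying primitive one, we may take $\ker\rho|_G=\Chor_n$; equivalently $H_0\cap(\{I\}\times\Sone)$ is exactly the cyclic group of order $n$ generated by $(I,-\tfrac1n)$. A finite abelian subgroup of $\SO(2)\times\Sone$ with this property is determined by its image in $\SO(2)$ (a cyclic group $\ZZ_k$) together with a ``twist'', namely an integer $\ell$ modulo $k$ with $(k,\ell)=1$; this reproduces the generator $g_0 = (R_{2\pi\ell/k},\,\tfrac1k)$ and gives $H_0\simeq\ZZ_n\times\ZZ_k$, matching the regular family $C(n,k/\ell)$. (The case $k=\infty$ corresponds to the closed subgroup $\SO(2)\times_\ell\Sone$ of the circular choreography.) The full $H$ is obtained by adjoining an involution; the constraint $\rho(\ker\tau)<\SO(2)$ from Proposition~\ref{prop:kernels}(2) forces any extra reflection to carry a nontrivial time component. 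Combined with $\tau(\ker\sigma)<\Sone$ and $\tau(\ker\rho)<\Sone$, this pins down the dihedral extensions to (a) the reversing reflection $(\kappa,\bar0)$, producing $D(n,k/\ell)$, or, only when $n$ is odd, (b) the three exceptional options where a nontrivial reflection is paired with a half-period shift.

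For step (ii), one must show that $\sigma$ is essentially forced: since $\ker\rho\cap\ker\sigma$ is trivial in the single-covered case, $\sigma$ is injective on $H_0/\Chor_n$, and compatibility with the centralizer relations within $\Sigma_n^+$ (specifically $\sigma_1^\pi = \sigma_1^{\pm1}$) determines $\sigma(g_0)$ up to the allowed ambiguity. The exceptional cases arise precisely when $n$ is odd: the element $(\kappa,-\tfrac{1}{2n})$ squares to $(I,-\tfrac1n)$, so for it to lift to a symmetry squaring to $\chor$ one needs $\sigma_2\in\Sigma_n$ with $\sigma_2^2=\sigma_1$, which exists iff $2$ is invertible modulo $n$, i.e.\ iff $n$ is odd; this produces $C'(n,2)$. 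Similarly $D'(n,1)$ uses the rotation $R_\pi$ (order $2$ independently of $n$) paired with $s_1$, and $D'(n,2)$ combines both constructions.

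The main obstacle will be the bookkeeping in step (i): although the finite subgroups of $\OO(2)\times\OO(2)$ can in principle be listed by Goursat's lemma, one must verify that each candidate either (a) is ruled out by one of the three cyclicity/kernel constraints of Proposition~\ref{prop:kernels}, or (b) admits a valid $\sigma$-lift, or (c) corresponds to a multiply-covered motion reducible to a case of lower order. Careful parity analysis — distinguishing whether reflections are time-reversing, whether the spatial order $2$ element is $\kappa$ or $R_\pi$, and whether $n$ is even or odd — is what separates the regular families from the three exceptionals. Conversely, realizability of each listed group by an explicit collision-free motion (e.g.\ by a low-degree Fourier ansatz as in the figures) ensures the list is exhaustive and not over-pruned.
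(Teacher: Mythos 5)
Your opening move is sound and genuinely different from the paper's: Proposition~\ref{prop:kernels}(1) does make $(\rho,\tau)$ injective, so $G$ is isomorphic to a finite subgroup $H<\OO(2)\times\Sonehat$ equipped with a lift $\sigma:H\to\Sigma_n^+$, and one can in principle run Goursat's lemma on $H$ subject to the kernel constraints. The paper instead splits on whether the core $\ker\tau$ is trivial: if it is, then $G\simeq\tau(G)<\Sonehat$ is cyclic or dihedral and the classification is a short computation with generators; if not, it passes to the quotient choreography $\uhat(t)=\Psi(u(t/c))$ in the configuration space $\Xnhat$ of the punctured plane and reduces to the trivial-core case. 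Your route trades that covering-space reduction for algebraic bookkeeping and treats both cases uniformly, which is a legitimate restructuring (at the price of a longer Goursat case list, which you acknowledge but do not carry out). One local imprecision: $\sigma$ is certainly \emph{not} injective on $H_0/\Chor_n$ (indeed $\sigma(g_0)=e$ for the generator $g_0$ of $\ker\sigma$); what pins down the lift is the injectivity of $\sigma$ on $\ker\rho$ and on $\ker\tau$, i.e.\ Proposition~\ref{prop:kernels}(1) again.

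The genuine gap is in the exclusion step. The tools you invoke --- the kernel/cyclicity conditions of Proposition~\ref{prop:kernels}, Goursat bookkeeping, and the parity of $n$ --- do not suffice to rule out hybrid groups combining a non-trivial core with an exceptional-type element. For instance, nothing in Proposition~\ref{prop:kernels} forbids a group containing both a core generator $(R_{2\pi r/c},\sigma_1^{n/c},0)$ with $c>1$ and the element $(R_\pi,s_1,\zerobar)$: the latter lies in none of $\ker\rho$, $\ker\sigma$, $\ker\tau$, so none of the kernel conditions bite. The paper excludes these combinations geometrically: a non-trivial core forces the curve to avoid the origin (Lemma~\ref{lemma:avoiding origin}) and to have non-zero winding number about it, whereas $(R_\pi,s_1,\zerobar)$ forces $z_1(0)=0$ (and then the core gives $z_{1+n/c}(0)=R_{2\pi r/c}\cdot 0=0$, a collision), and the $C'(n,2)$ generator forces winding number zero. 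Without importing these collision and winding-number arguments, your candidate list would be strictly longer than the theorem's; they need to be added to your steps (i) and (ii) before the proof closes.
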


The proof of this theorem is the subject of Section~\ref{sec:main proof}.  For the remainder of this section, we make some elementary observations about the symmetry groups in the following remarks, and describe a few consequences of the classification.

\begin{figure} [t]
\centering
\subfigure[$D(3,4)$ at $t=0$]%
 {\includegraphics[scale=0.18]{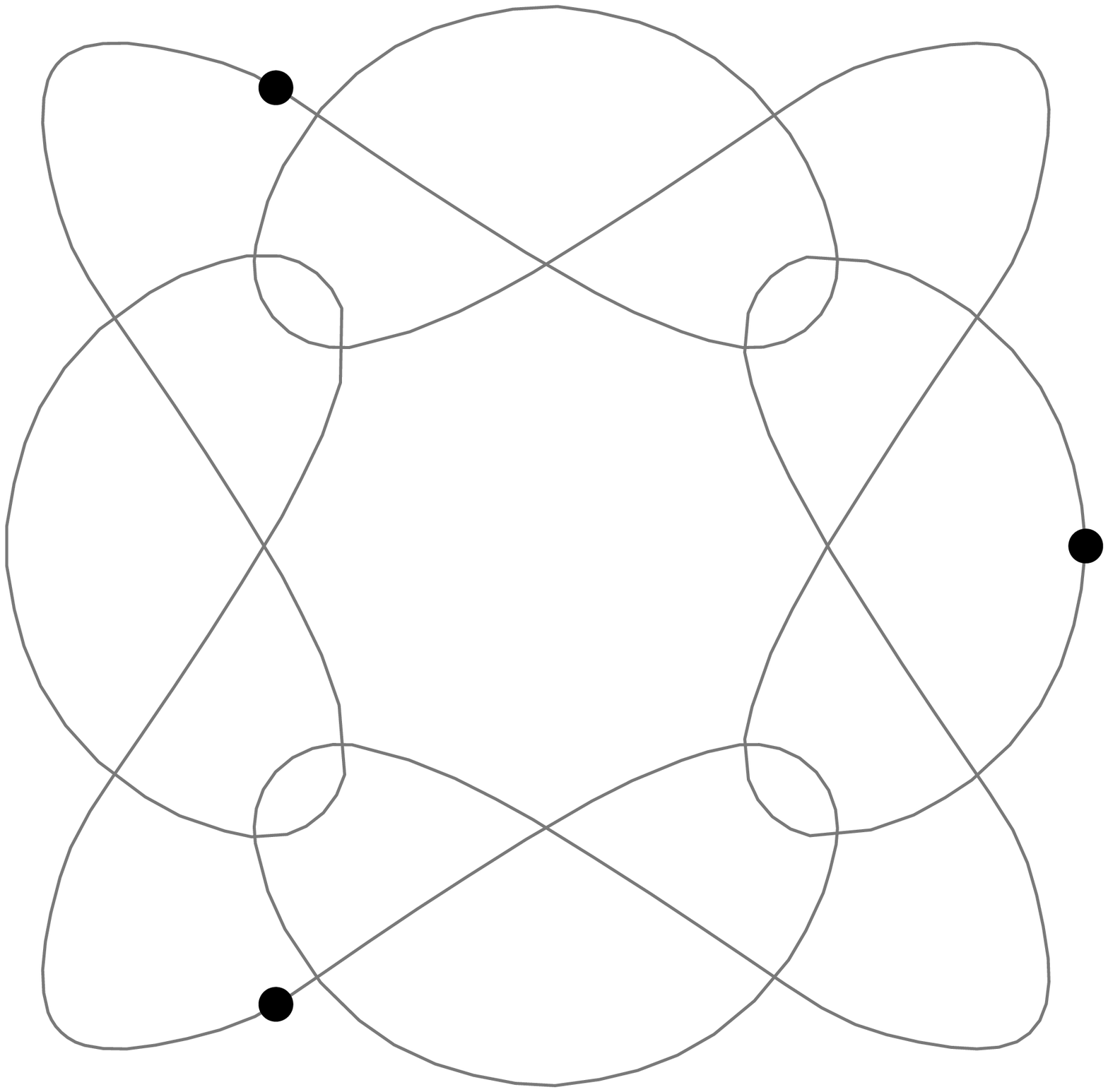}}
 \qquad
\subfigure[$D(3,4)$ at  $t=1/24$]%
 {\includegraphics[scale=0.18]{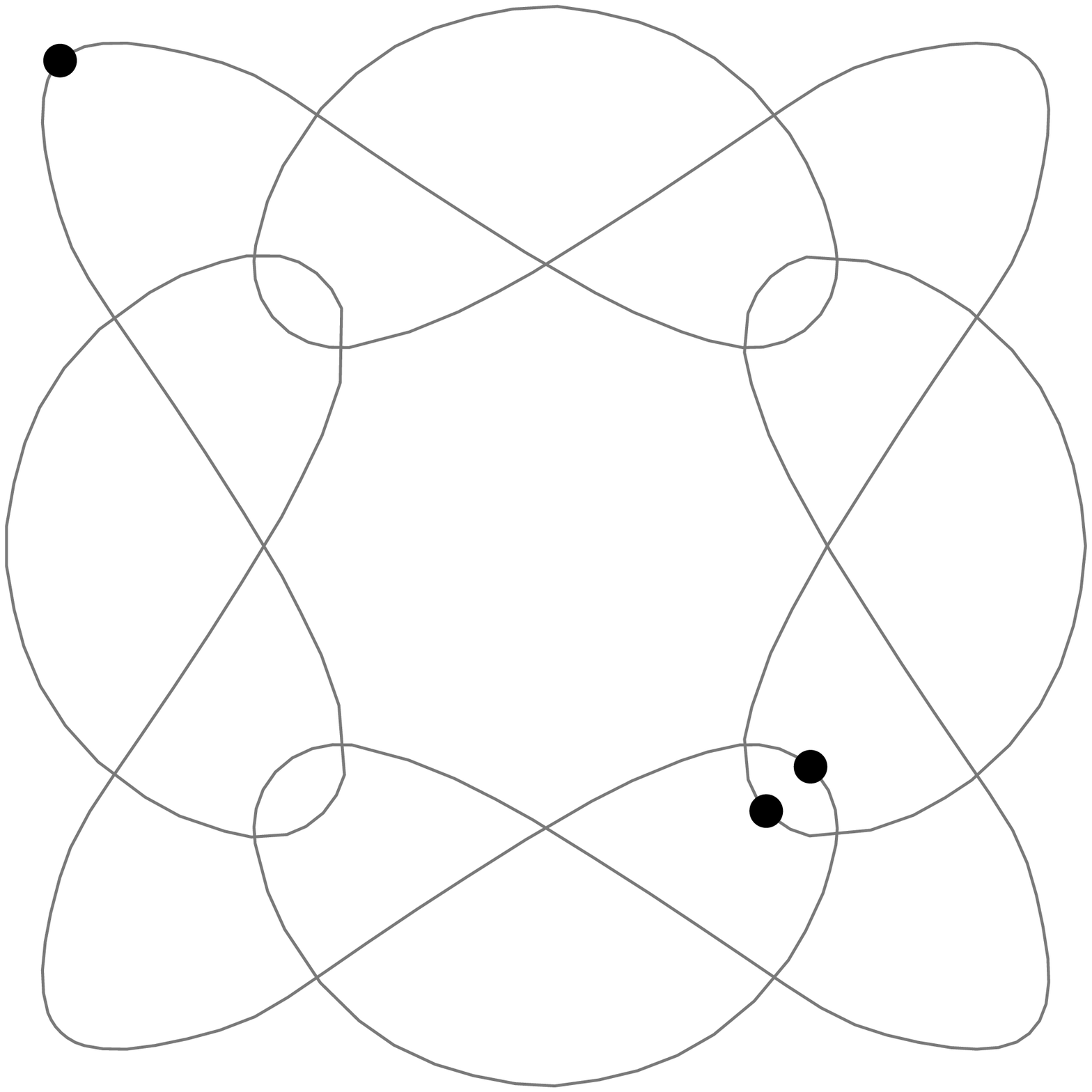}}
 \qquad 
\subfigure[$D(3,4)$ at  $t=1/12$]%
 {\includegraphics[scale=0.18]{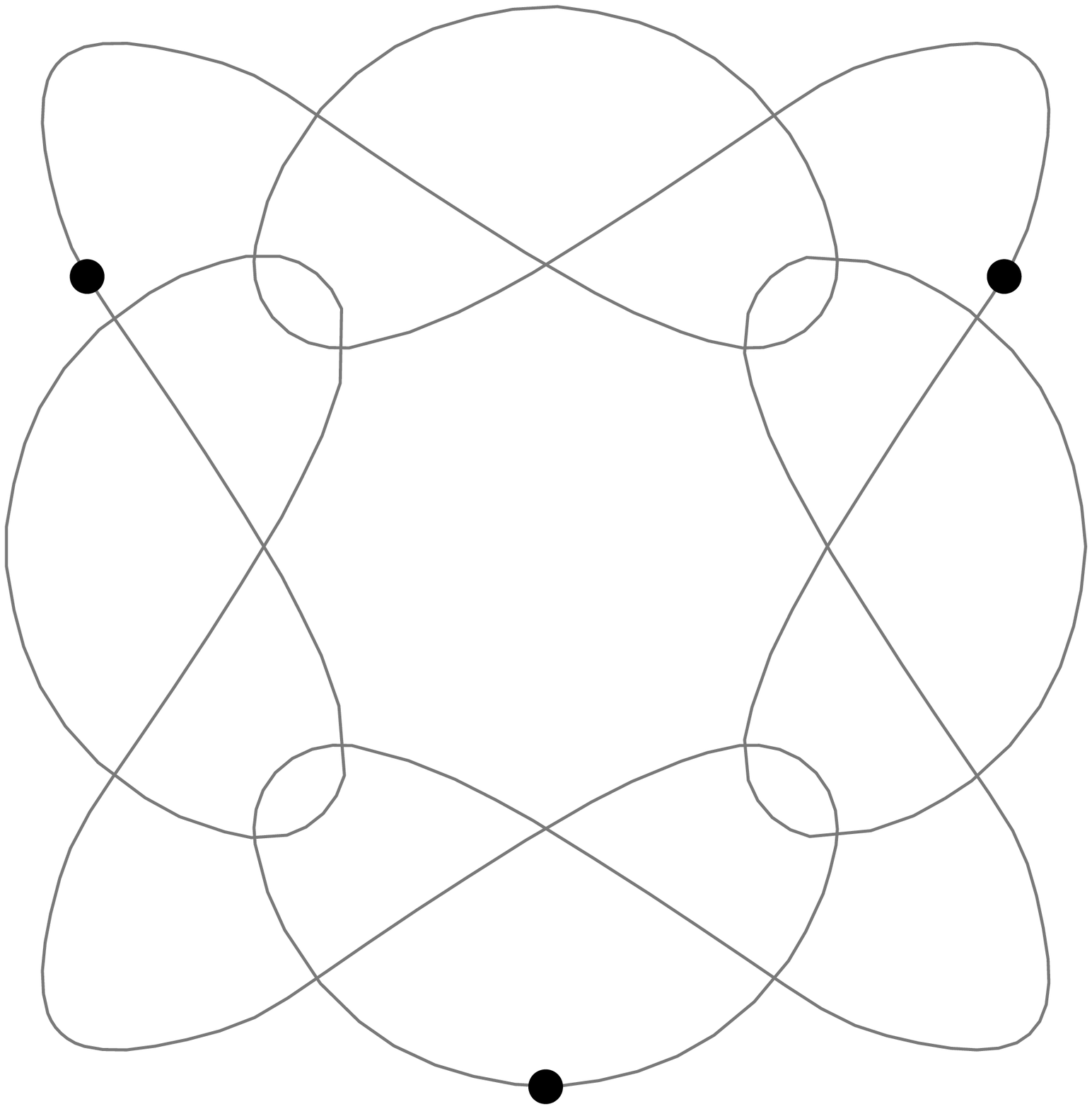}}
\caption{A choreography for the 3-body problem with 4-fold symmetry (``3 particles on a Celtic knot''). Note that the particles at $t=1/12$ are obtained from those at $t=0$ by rotating by $-\pi/2$ and relabelling by $\sigma_1^{-1}=(1\;3\;2)$. This corresponds to the symmetry element $g=(R_{3\pi/2},\,\sigma_1^2,\, 1/12)$, the generator of $C(3,4)$.  Fig.~(a) demonstrates the element $(\kappa,\,(2\;3),\zerobar)\in D(3,4)$, while Fig.~(b) illustrates $(\kappa_{\pi/4},\,(1\;3),\,\overline{1/12})$ and Fig.~(c) illustrates $(\kappa_{\pi/2},\,(1\; 2),\,\overline{1/6})$. See  \cite{JMweb} for the animation.
}
\label{fig:D(3,4)}
\end{figure}

\begin{remarks}\label{rmks:symmetry groups}
\begin{enumerate}
\item For the groups $C(n,k/\ell)$ and $D(n,k/\ell)$ it is useful to note that the cyclic groups $\ker\rho,\;\ker\sigma$ and $\ker\tau$ (see Proposition~\ref{prop:kernels}) are generated as follows:
\begin{itemize}
\item $\ker\rho=\Chor_n$ by $\chor=(I,\,\sigma_1,\,-\nfrac1n)$,
\item $\ker\sigma$ by $g_0=(R_{2\pi\ell/k},\,e,\,\nfrac1k)$, and
\item $\ker\tau$ (the core), which is cyclic of order $c:=(n,k)$, by
\begin{equation}\label{eq:core generator}
g_0^{k/c}\chor^{n/c} = (R_{2\pi \ell/c},\, \sigma_1^{n/c},\,0).
\end{equation}
\item It is also useful to note that if $c=(n,k)=1$ then $\tau$ is an isomorphism so that $C(n,k/\ell)$  is cyclic of order $nk$ generated by
\begin{equation}\label{eq:coprime generator}
g_0^a\chor^b  = (R_{2\pi a\ell/k},\,\sigma_1^b,\,\nfrac1{nk}),
\end{equation}
where $a,b$ are such that $an-bk=1$. More generally, if $(n,k)=c$ then $\tau(G)$ is generated by $c/nk$ and one has
\begin{equation}\label{eq:c/nk generator}
 g_0^a\chor^b  = (R_{2\pi a\ell/k},\,\sigma_1^b,\,\nfrac{c}{nk}),
\end{equation}
where now $an-bk=c$, but $G$ itself is not cyclic.
\end{itemize}

\begin{figure}  
\centering
\subfigure[$D(6,4)$]{\includegraphics[scale=0.2]{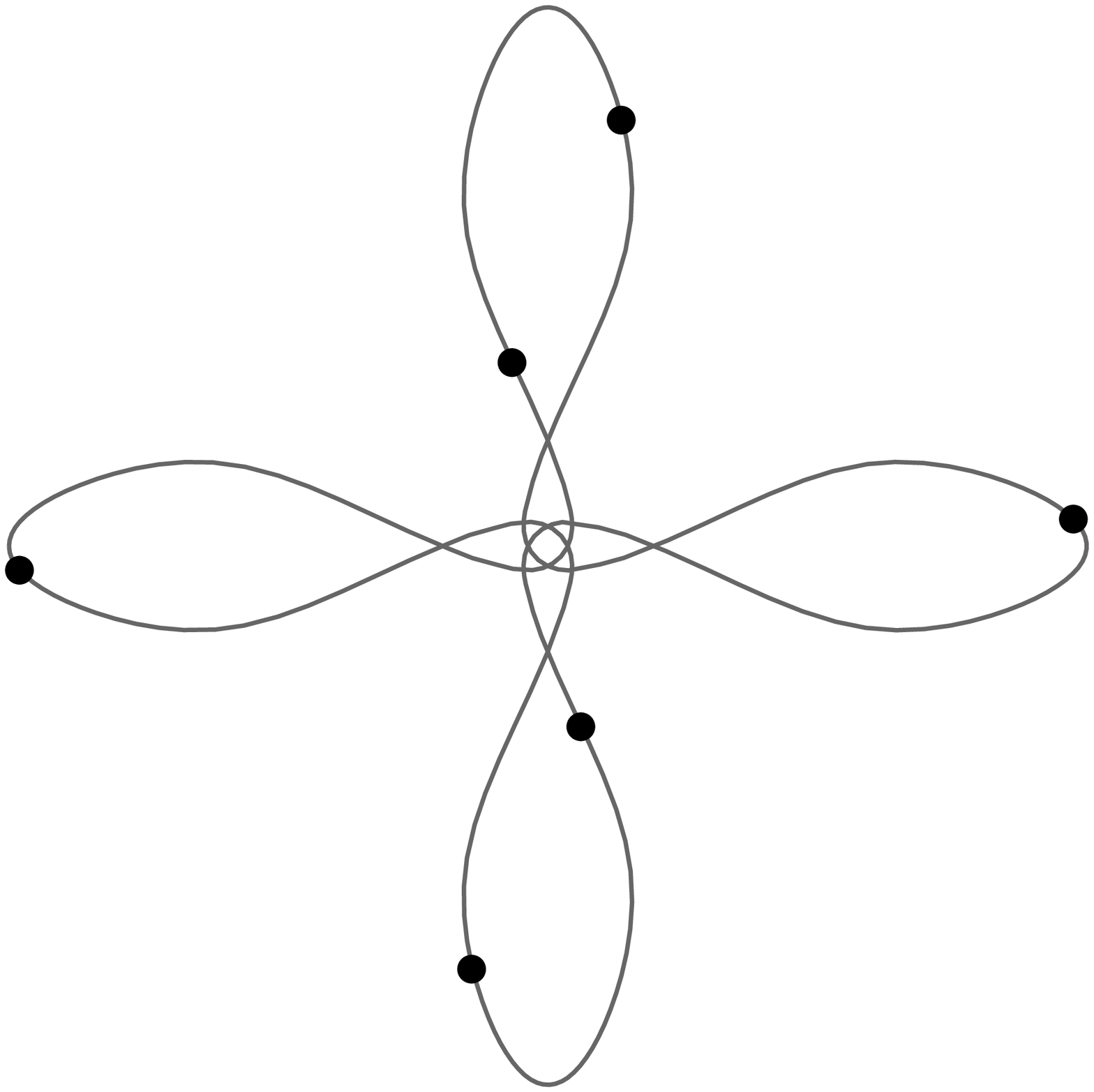}}
 \qquad \qquad
\subfigure[$D(10,5/2)$]{\qquad\includegraphics[scale=0.22]{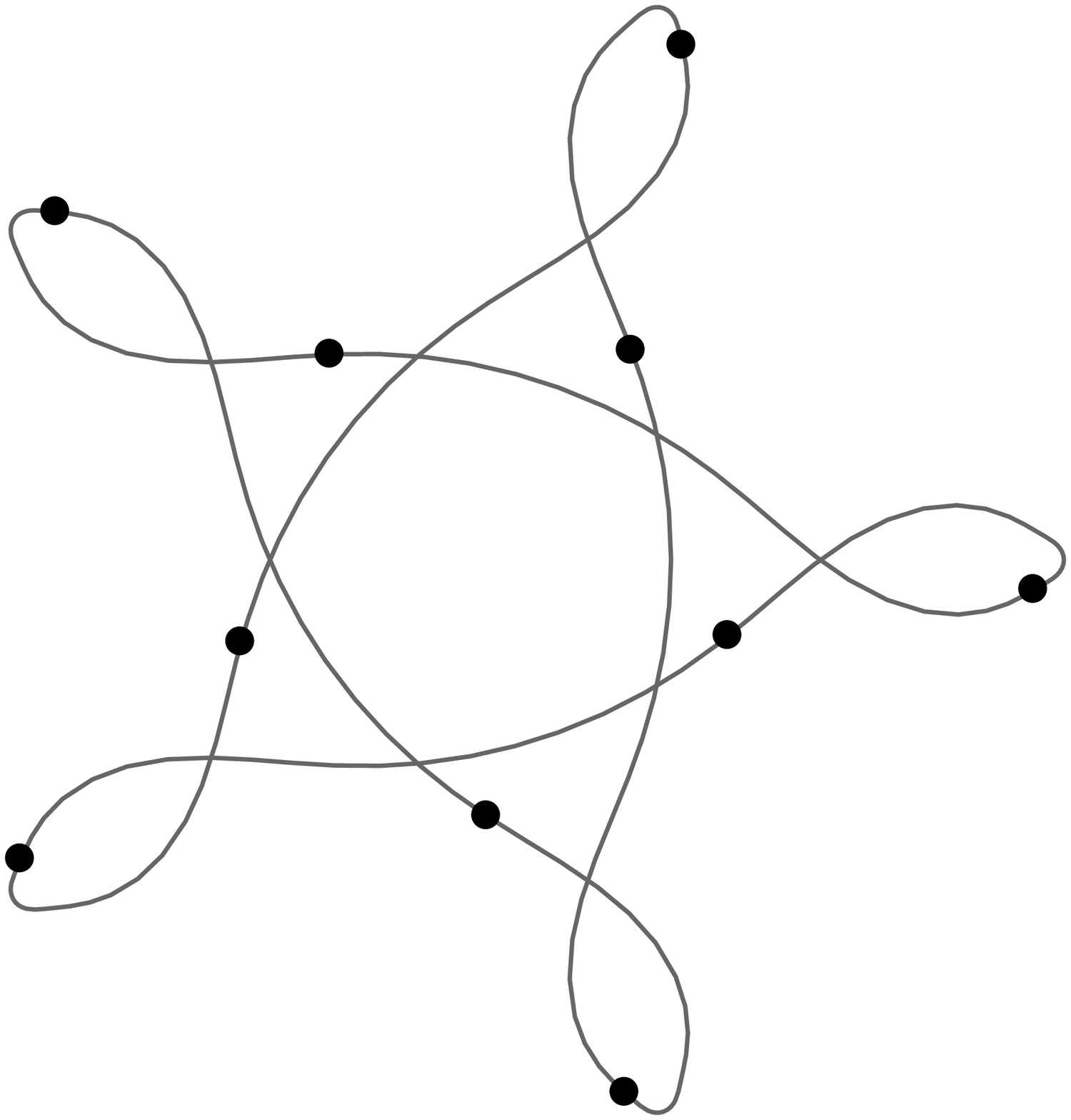}}
\caption{Two examples of choreographies with nontrivial core: in the first the order of the core is $c=2$ and in the second $c=5$ (generated by rotations through $\pi$ and $2\pi/5$ respectively). They are shown for generic values of $t$ illustrating the core symmetry which is valid for all $t$. Compare with special configurations $t=0$ in Fig.~\ref{fig:many choreographies}~(c) and (p)}
\label{fig:core}
\end{figure}

\item It was stated in the definition of $C(n,k/\ell)$ that $\ell$ is coprime to $k$.  If $(k,\ell)>1$ then the element $(I,e,\nfrac{1}{(k,\ell)})$ is in $G$, contradicting Proposition~\ref{prop:kernels}.
Moreover, the group $C(n,k/\ell)$ is conjugate to $C(n,k/(k-\ell))$ (by the element $(\kappa,e,0)$, and this together with the coprimality, allows us to restrict attention to $\ell=1$ for $k\leq4$ and  $1\leq \ell < k/2$ if $k\geq5$.  The same restrictions also apply to $D(n,k/\ell)$. 
\item If $n$ is even the analogues of the exceptional subgroups would involve collisions and so do not arise.
\item The subgroups denoted $C$ are all non-reversing symmetry groups while those denoted with $D$ are reversing (that is, of \emph{cyclic} and \emph{dihedral} type respectively, in the language of \cite{FT04}).
\item We will see that all these symmetry types are of interest for choreographies in the $n$-body problem (at least with a strong force interaction---see Theorem~\ref{thm:SF}), except those where $n$ divides $k$ (see Example~\ref{eg:n divides k}).
\item In the two regular families, an element preserves orientation in $\bb{T}$ if and only if it preserves orientation in $\bb{R}^2$, while for the exceptional groups there are elements $g$ with $\tau(g)\in\Sone$ but $\rho(g)\not\in\SO(2)$, or conversely with $\rho(g)\in\SO(2)$ but $\tau(g)\not\in\Sone$.
\item  For any choreography with symmetry type $D'(n,1)$ or $D'(n,2)$, the particles pass through the origin. This is because the presence of the time-reversing element $(R_\pi,s_1,\zerobar)\in G$ implies that $z_1(0)=R_\pi z_1(0)$, so that $z_1(0)=0$.  It then follows that each of the other particles also passes through 0.  
\item Choreographies of type $C'(n,2)$ may or may not pass through the origin, but if one does not then its winding number around the origin is 0.  This is because the symmetry $(\kappa,\sigma_2, \nfrac{1}{2n})$ reverses the orientation of the plane, but preserves the orientation of the curve, so takes the winding number to its opposite. This is illustrated in  Fig.~\ref{fig:exceptionals}(a,b) where the origin (the barycentre) is clearly to the left of the symmetric crossing point. 
\end{enumerate}
\end{remarks}

\begin{figure} [t]   
\centering
  \subfigure[$C'(7,2)$ at $t=0$]%
{\includegraphics[scale=0.2]{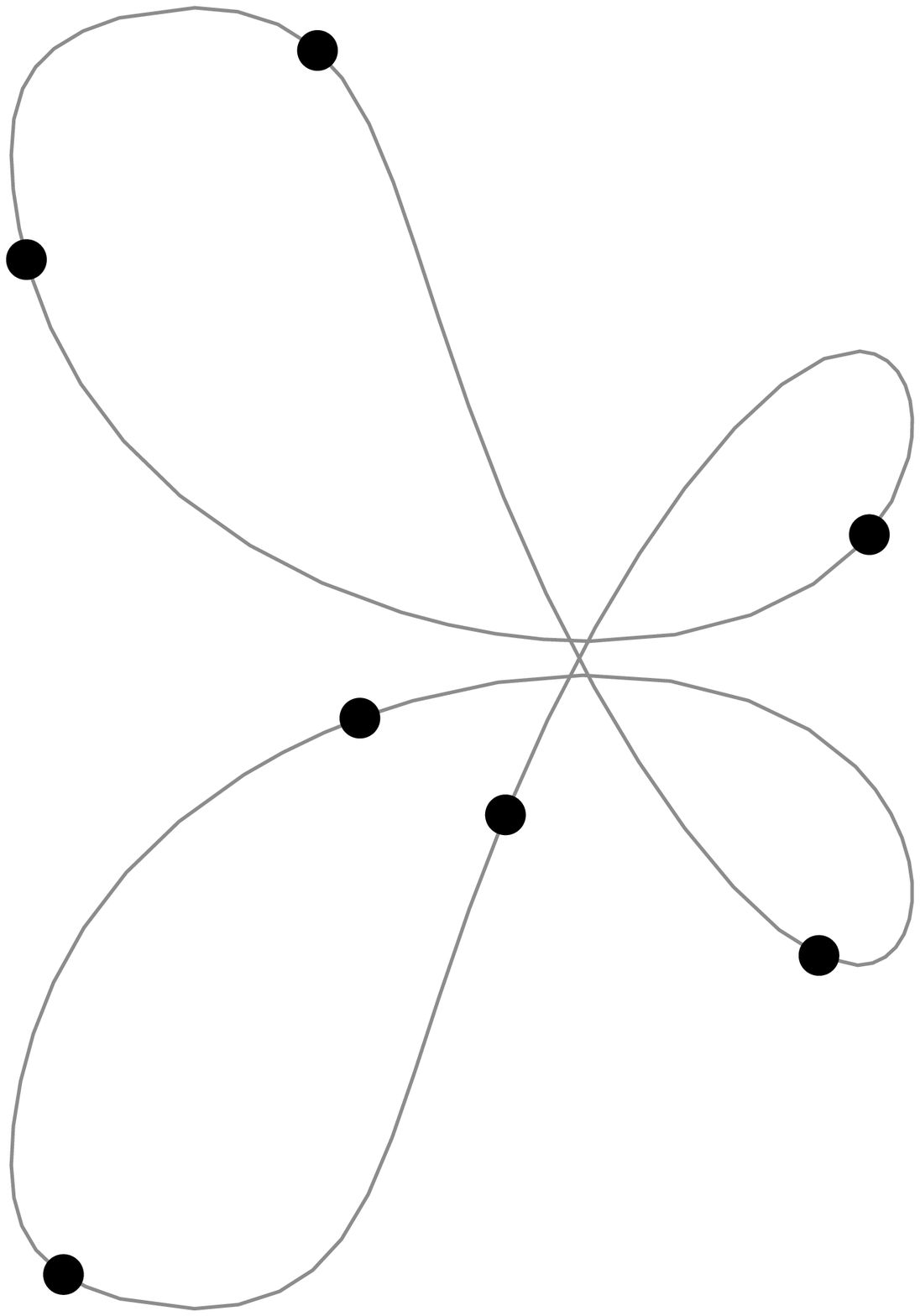}}
 \qquad \qquad
  \subfigure[$C'(7,2)$ at $t=1/14$]%
{\includegraphics[scale=0.2]{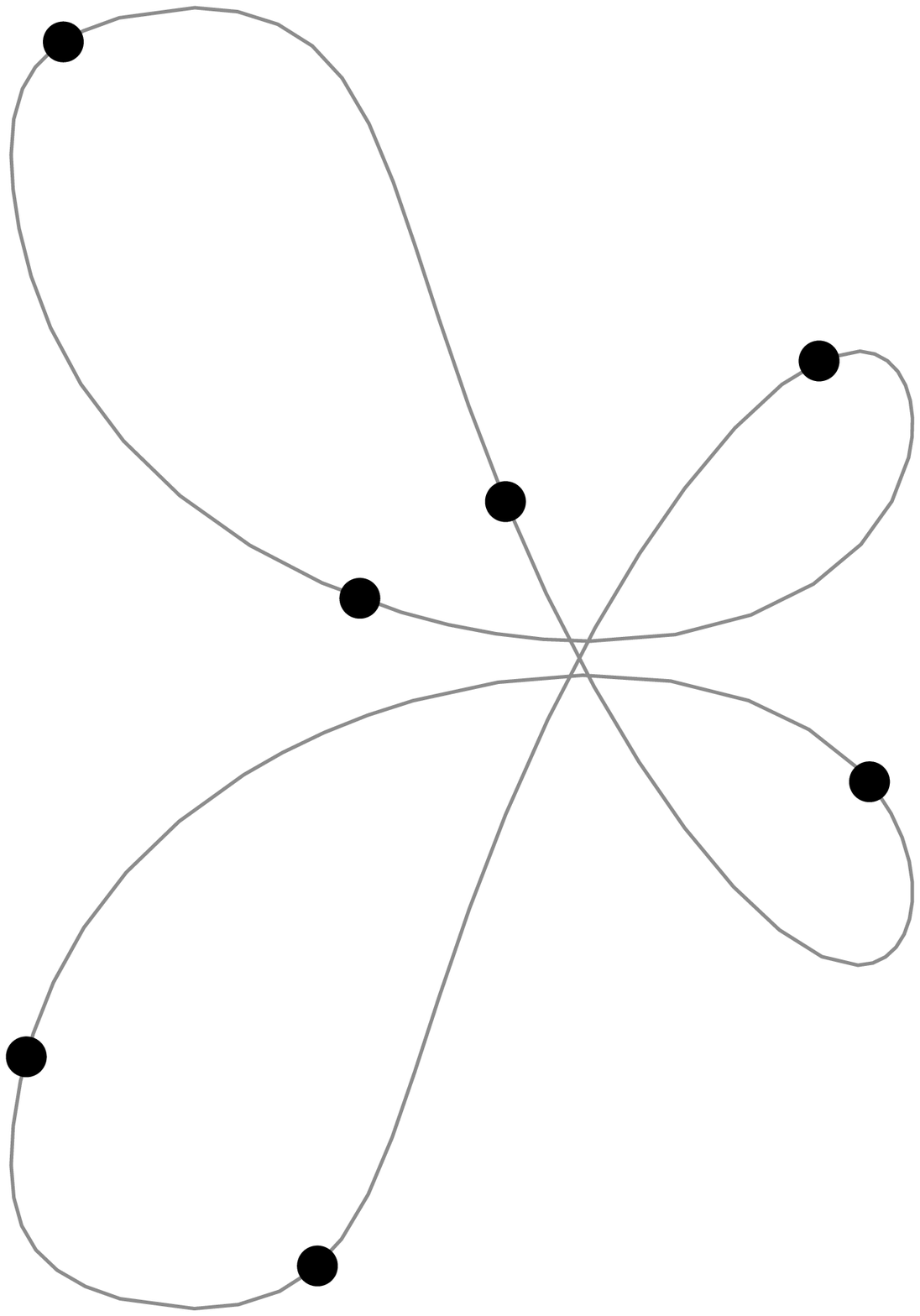}}
 
\subfigure[$D'(9,1)$ (at $t=0$)]%
 {\includegraphics[scale=0.2]{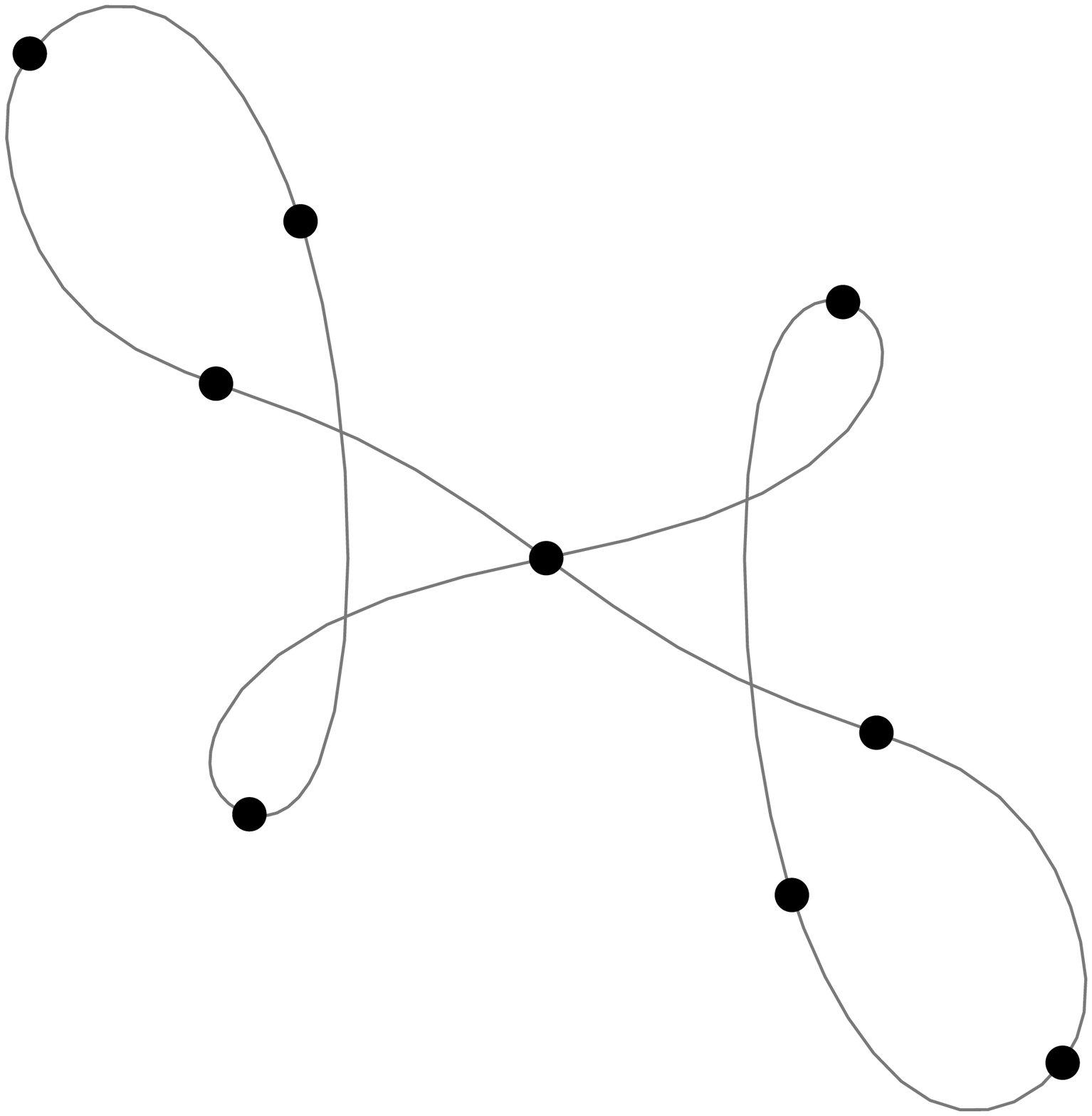}}
\qquad \qquad
\subfigure[$D'(7,2)$ (at $t=0$)]%
 {\includegraphics[scale=0.2]{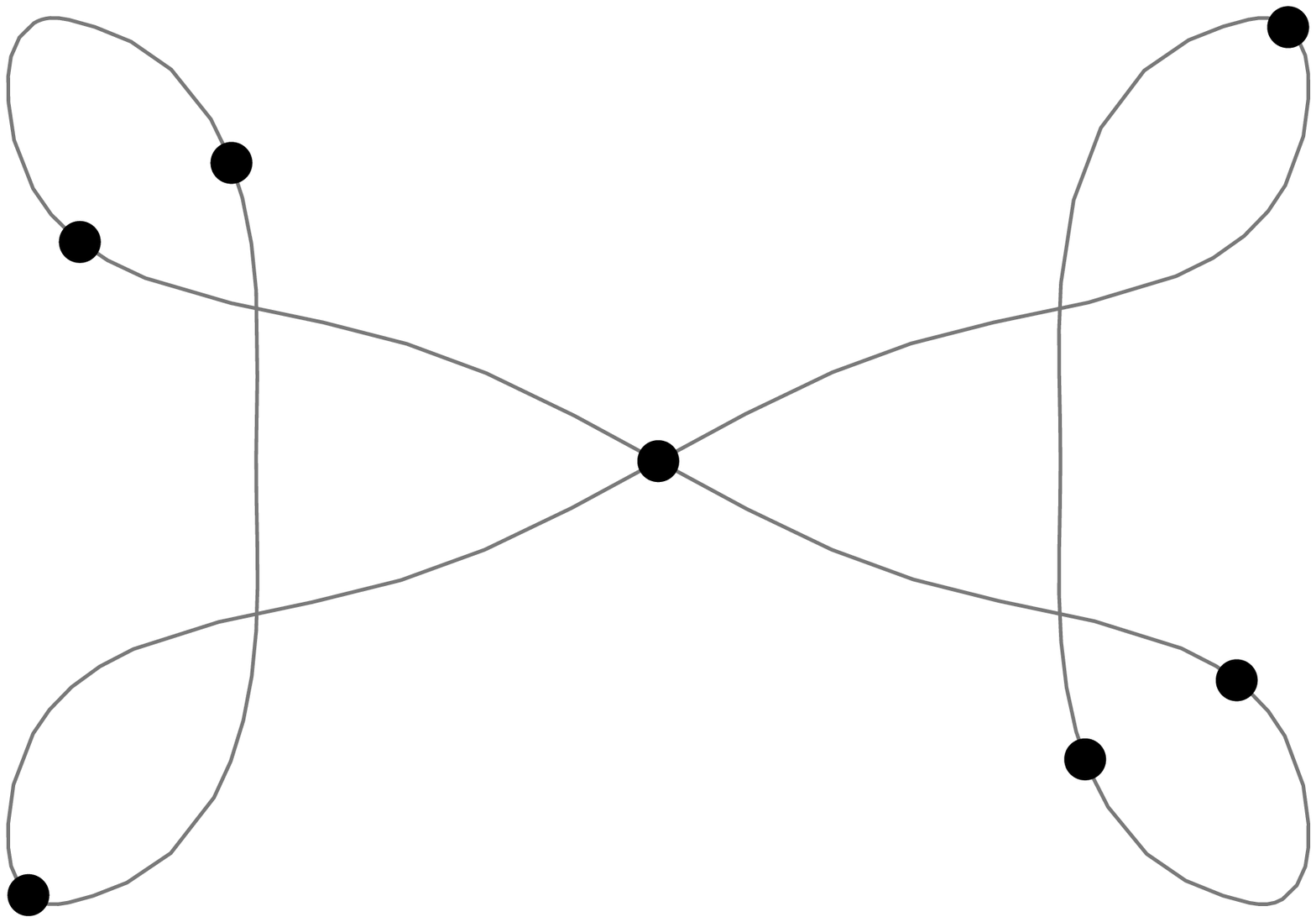}}
\caption{Exceptional symmetry groups. 
  (a,b) A choreography for the 7-body problem, with
  symmetry $C'(7,2)$, demonstrating the symmetry generator
 $(\kappa,\,\sigma_2,\,\mbox{\small 1/14})$, where here  
  $\sigma_2=(1\;5\;2\;6\;3\;7\;4)$. 
 (c) A choreography with $D'(9,1)$ symmetry:  rotation by $\pi$ is combined with a time reversal. (d) A choreography with $D'(7,2)$ symmetry which possesses both the other symmetries: a rotation by $\pi$ combined with time-reversal as well as a reflection which is not time reversing; another with this symmetry, in a different connected component, would be the figure-8 curve with 7 particles. 
  See  \cite{JMweb} for animations. 
}
\label{fig:exceptionals}
\end{figure}

\begin{proposition} \label{prop:vanishing angular momentum}
Choreographies with symmetry type $C'(n,2),D'(n,1)$ or $D'(n,2)$ have zero angular momentum.
\end{proposition}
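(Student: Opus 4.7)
The plan is to show that each of the three exceptional symmetry groups contains an element whose action on loops forces the instantaneous angular momentum
$$L(t) \;=\; \sum_{j=1}^n \mathrm{Im}\bigl(\overline{z_j(t)}\,\dot z_j(t)\bigr)$$
to change sign. Since the $n$-body system is $\SO(2)$-invariant, $L$ is a conserved quantity along any trajectory, so the identity $L(t) = -L(t)$ will immediately force $L \equiv 0$.

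First I would treat $C'(n,2)$. Its generator $g = (\kappa,\sigma_2,-\nfrac{1}{2n})$ yields the relation $z_{\sigma_2(j)}(t - \nfrac{1}{2n}) = \overline{z_j(t)}$; differentiating in $t$ (the shift is by a constant) gives $\dot z_{\sigma_2(j)}(t - \nfrac{1}{2n}) = \overline{\dot z_j(t)}$. Substituting into $L(t - \nfrac{1}{2n})$ and relabelling the sum using the bijection $\sigma_2$ yields
$$L\bigl(t - \nfrac{1}{2n}\bigr) \;=\; \sum_j \mathrm{Im}\bigl(z_j(t)\,\overline{\dot z_j(t)}\bigr) \;=\; -L(t).$$
The sign flip arises because $\kappa$ is orientation-reversing on the plane while $\tau(g) = -\nfrac{1}{2n}$ is not a time reversal.

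For $D'(n,1)$ I would use the generator $(R_\pi, s_1, \zerobar)$, which imposes $z_{s_1(j)}(-t) = -z_j(t)$ and hence $\dot z_{s_1(j)}(-t) = \dot z_j(t)$. A short relabelling of the sum then shows $L(-t) = -L(t)$; this time the sign comes from the time reversal, while the factor $R_\pi = -I$ cancels in the bilinear form $\mathrm{Im}(\bar z \dot z)$. Finally $D'(n,2)$ contains both of the preceding groups as index-$2$ subgroups, so the conclusion is immediate.

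There is no serious obstacle here; the only care needed is in tracking signs through the interplay of complex conjugation, planar rotation by $\pi$, and time reversal. An alternative argument avoiding conservation would observe that $L(\tau(t)) = -L(t)$ already forces the time-averaged angular momentum $\int_0^1 L(t)\,\dt$ to vanish; but since the paper treats trajectories of an $\SO(2)$-invariant system, the pointwise argument using conservation is the cleanest presentation.
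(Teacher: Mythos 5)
Your proof is correct and follows essentially the same route as the paper: identify an element of each exceptional group that reverses the sign of the angular momentum (the spatial reflection $\kappa$ for $C'(n,2)$, the time reversal for $D'(n,1)$, and either for $D'(n,2)$), and conclude $L=-L=0$. The paper states the sign rules for each factor of $\OO(2)\times S_n\times\Sonehat$ without writing out $L(t)$ explicitly, whereas you carry out the sign bookkeeping in detail and make the reliance on conservation (or the time-averaged alternative) explicit; the computations check out.
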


\begin{proof}
The symmetries we consider each have a well-defined effect on angular momentum. The rotations $\SO(2)$, the permutations $S_n$ and the time translations $\Sone$ preserve the angular momentum, while the reflections in $\OO(2)$ and the time reversing elements of $\Sonehat$ change angular momentum to its opposite. It is clear that any choreography whose symmetry group contains an element which changes the sign of the angular momentum must have angular momentum equal to zero.  This is the case for all the exceptional symmetry groups, as $(\kappa,\sigma_2,-\nfrac1{2n})$ and $(R_\pi,s_1,\zerobar)$ reverse the angular momentum (as do all conjugate elements). It is not the case for the regular families.
\end{proof}

\subsection{Fourier series} \label{sec:Fourier}

Write $z(t)$ for the parametrized curve defining the choreography $u(t)$, with $t\in\bb{T}=\bb{R/Z}$. So 
$$z_j(t) = z\left(t+(j-1)/n\right),\quad j=1,\dots,n.$$
Using complex coefficients we can write $z$ as a Fourier series,
\begin{equation}\label{eq:Fourier}
z(t) = \sum_{r\in\bb{Z}}\zeta_r\exp(2\pi\ii rt).
\end{equation}

The fact that the centre of mass is at the origin translates into the following constraint on the coefficients, as observed by Sim\'o \cite{Simo01a}:

\begin{lemma} 
If $u$ is a choreography with $n$ particles and  $r$ is a multiple of $n$ then $\zeta_r=0$ in (\ref{eq:Fourier}).
\end{lemma}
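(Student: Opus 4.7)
The plan is to use the defining property of a choreography together with the vanishing of the centre of mass, and extract the claim from the orthogonality of characters of $\mathbb{Z}_n$.

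First I would write out what the choreography condition says for the common curve. Since $z_j(t) = z(t + (j-1)/n)$, we have, for each $t\in\bb{T}$,
\[
0 \;=\; \sum_{j=1}^n z_j(t) \;=\; \sum_{j=1}^n z\bigl(t + (j-1)/n\bigr).
\]

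Next I would substitute the Fourier expansion \eqref{eq:Fourier} of $z$ into this identity and interchange the two finite/convergent sums (justified by absolute convergence after smoothing, or more cleanly by working formally with the Fourier coefficients, which is all that is needed to identify them):
\[
0 \;=\; \sum_{r\in\ZZ} \zeta_r \,\ee^{2\pi\ii r t}\,\biggl(\sum_{j=1}^n \ee^{2\pi\ii r(j-1)/n}\biggr).
\]
The inner geometric sum is the character sum $\sum_{j=0}^{n-1} \omega^{rj}$ for $\omega=\ee^{2\pi\ii/n}$, which equals $n$ if $n\divides r$ and $0$ otherwise. Hence the identity reduces to
\[
\sum_{\substack{r\in\ZZ \\ n\divides r}} n\,\zeta_r \,\ee^{2\pi\ii r t} \;=\; 0 \qquad\forall t\in\bb{T}.
\]
Uniqueness of Fourier coefficients then forces $\zeta_r=0$ whenever $n\divides r$, which is the claim.

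There is no real obstacle here; the only thing to be slightly careful about is that $z$ need only be assumed continuous (or $H^1$), so the Fourier series is interpreted in $L^2$ (or distributionally), but in either setting the step of reading off the coefficients from the identity above is standard and does not require pointwise convergence.
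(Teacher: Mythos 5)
Your proof is correct and follows essentially the same route as the paper's: substitute the Fourier expansion into the vanishing centre-of-mass identity, interchange the sums, and use the geometric (character) sum $\sum_{j}\ee^{2\pi\ii rj/n}$, which vanishes unless $n\divides r$, to read off $\zeta_r=0$ for $r$ a multiple of $n$. Your added remark about the $L^2$/distributional interpretation of the coefficient identification is a reasonable extra precaution but not needed beyond what the paper does.
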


\begin{proof}
Let \(z(t)=\sum_r\zeta_{r}\exp(2\pi\ii rt)\) and \(z_j(t)=z(t+(j-1)/n)\). The centre of mass (as a function of time) is
\begin{eqnarray*}
0&=&\frac1n \sum_r \zeta_{r}\left[\sum_{j=1}^n \exp(2\pi\ii r(t+j/n))\right] \\
& =&\frac1n \sum_r \exp(2\pi\ii rt)\left[\zeta_{r}\sum_{j=1}^n \exp(2\pi\ii rj/n)\right].
\end{eqnarray*}
This is satisfied if and only if
\[ \zeta_{r}\sum_{j=1}^n \exp(2\pi\ii rj/n) = 0\quad(\forall r)\] 
and the result then follows since the sum over \(j\) vanishes if \(n\) is not a divisor of \(r\), otherwise it is equal to \(n\) .
\end{proof}

If the choreography only has choreographic symmetry $\Chor_n=C(n,1)$  (as in Fig.~\ref{fig:choreo-egs}(d)) then there is no further restriction on the Fourier series of the underlying curve. If there is just one reflection giving a time-reversing symmetry, then $z(-t)=\overline{z(t)}$, and this translates into the condition $\zeta_{n}\in\RR$, which in practice means that $x(t)$ has a cosine expansion and $y(t)$ a sine expansion (where $z=x+\ii y$).

\begin{proposition} \label{prop:Fourier}
The symmetry of a choreography $u$ translates into the conditions on the Fourier coefficients shown in Table \ref{table:Fourier}. 
\end{proposition}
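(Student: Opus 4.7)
The plan is to treat each symmetry type listed in the classification (Theorem~\ref{thm:classification}) in turn, and for each chosen generator of the symmetry group translate the equivariance equation for $u$ into a single functional equation for the generating curve $z$, which then becomes a linear condition on the Fourier coefficients $\zeta_r$.

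The key observation to set up is that for a choreography, the whole motion is determined by $z$ via $z_j(t)=z(t+(j-1)/n)$. Given a generator $g=(A,\sigma,\tau)\in G$ with $\sigma\in\Sigma_n^+$ (permitted by Proposition~\ref{prop:kernels}(4)), the equivariance $u(\tau(t))=(A,\sigma)\cdot u(t)$ reads $z_{\sigma(j)}(\tau(t))=A\,z_j(t)$. Since $\sigma$ is either $\sigma_1^m$ (shifting labels) or $\sigma_1^m s_1$ (shifting-and-flipping), the index shift translates uniformly into an extra time shift in the argument of $z$, and every symmetry condition collapses to one of the two universal functional equations
\begin{equation*}
z(t+\alpha)=A\,z(t) \qquad \text{or}\qquad z(t+\alpha)=A\,\overline{z(-t+\beta)},
\end{equation*}
depending on whether $g$ is non-reversing or reversing (the conjugation arises only when $A$ is a reflection, written as $\kappa\cdot R_\theta$). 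Substituting the Fourier series $z(t)=\sum_r\zeta_r\ee^{2\pi\ii rt}$ and comparing coefficients gives precisely the tabulated conditions on $\zeta_r$.

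Concretely I would proceed generator by generator. First, check that $\chor$ itself imposes no new condition (it's built into the ansatz $z_j(t)=z(t+(j-1)/n)$). Then handle the regular families: for $g_0=(R_{2\pi\ell/k},e,1/k)$, applied to any $z_j$, one gets $z(s+1/k)=\ee^{2\pi\ii\ell/k}z(s)$, which forces $\zeta_r=0$ unless $r\equiv\ell\pmod k$; for the reflection $(\kappa,s_1,\zerobar)$ in $D(n,k/\ell)$, setting $j=1$ gives $z(-t)=\overline{z(t)}$, i.e.\ $\zeta_{-r}=\overline{\zeta_r}$. For the exceptional cases, the generator $(\kappa,\sigma_2,-1/(2n))$ of $C'(n,2)$ requires the cleanest computation: the index shift by $\sigma_2=\sigma_1^{(n+1)/2}$ combines with the time shift $-1/(2n)$ to give $z(s+\tfrac12)=\overline{z(-s)}$, whence $\zeta_{-r}=(-1)^r\overline{\zeta_r}$. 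The generator $(R_\pi,s_1,\zerobar)$ of $D'(n,1)$ gives $z(-t)=-z(t)$, so $\zeta_{-r}=-\zeta_r$. The group $D'(n,2)$ is then obtained by conjunction.

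The only real obstacle is bookkeeping: the permutation component $\sigma$ and the time-translation component $\tau$ must combine consistently to produce a functional equation for $z$ on the entire circle $\bb{T}$ (not just for a single $j$). The rigorous point is to verify that the equation obtained from one value of $j$ is equivalent to those obtained from all other $j$ — this follows because the map $j\mapsto \sigma(j)$ is compatible with the cyclic shift built into the choreography ansatz, so that after the substitution $s=t+(j-1)/n$ the resulting condition on $z$ is independent of $j$. Once this is checked for the generators of each $G$, converting each functional equation into a Fourier identity is a one-line computation using $\overline{\ee^{2\pi\ii rt}}=\ee^{-2\pi\ii rt}$ and the orthogonality of the modes, and the conditions so obtained are exactly those in Table~\ref{table:Fourier}.
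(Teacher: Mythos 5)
Your overall strategy---reduce the equivariance condition for each generator to a single functional equation for the underlying curve $z$, check that the equation obtained is independent of the index $j$ (because $\Chor_n\lhd G$ is already encoded in the ansatz $z_j(t)=z(t+(j-1)/n)$), and then compare Fourier coefficients---is exactly what the paper intends; its own proof is the one sentence ``This is a simple calculation for each group'', so the value of your write-up lies entirely in the case computations, and two of these are wrong as written. For $D(n,k/\ell)$ the functional equation $z(-t)=\overline{z(t)}$ is correct, but comparing coefficients of $\ee^{-2\pi\ii rt}$ on both sides gives $\zeta_r=\overline{\zeta_r}$, i.e.\ $\zeta_r\in\RR$ for all $r$, which is the table's entry; your stated conclusion $\zeta_{-r}=\overline{\zeta_r}$ is instead the condition that $z(t)$ be real-valued for all $t$, which would place all particles on a line and force collisions. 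For $C'(n,2)$ you write $z(s+\tfrac12)=\overline{z(-s)}$, which contains a time reversal; but $C'(n,2)$ is a non-reversing group, and the correct equation (seen most quickly from $g^n=(\kappa,e,\tfrac12)$) is $z(s+\tfrac12)=\overline{z(s)}$. Your Fourier conclusion $\zeta_{-r}=(-1)^r\overline{\zeta_r}$ follows from the correct equation, not from the one you wrote, which would instead give $(-1)^r\zeta_r=\overline{\zeta_r}$.

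There is also a gap in the final step ``$D'(n,2)$ is then obtained by conjunction.'' Taking the conjunction of $\zeta_{-r}=-\zeta_r$ and $\zeta_{-r}=(-1)^r\overline{\zeta_r}$ literally yields $-\zeta_r=(-1)^r\overline{\zeta_r}$, hence $\zeta_r\in\ii\,\RR$ for $r$ even and $\zeta_r\in\RR$ for $r$ odd---the parity \emph{opposite} to the table's $D'(n,2)$ row. The discrepancy is a matter of which conjugate representative is used: the conditions in the table's $D'(n,2)$ row correspond to the copy of $C'(n,2)$ generated by $(R_\pi\kappa,\sigma_2,-\nfrac{1}{2n})$, i.e.\ the exceptional rows of the table implicitly use representatives differing by a rotation of the plane through $\pi/2$ (this is also the convention matching the standard figure-eight parametrization). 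Since the Fourier conditions are not conjugation-invariant, a complete proof must fix the representative of each symmetry type before computing, and your conjunction step as stated does not reproduce the table without this adjustment.
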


\begin{proof}
This is a simple calculation for each group.
\end{proof}

\begin{table}
\centering
\begin{tabular}{@{}c l@{}}
\toprule
Symmetry & Conditions on Fourier coefficients\\ \midrule\midrule
$C(n,k/\ell)$ & $\zeta_r=0\;\forall r\not\equiv\ell\bmod k$ \\ \midrule
$D(n,k/\ell)$ & as $C(n,k/\ell)$, with $\zeta_r\in\bb{R}$ \\ \midrule
$\zeta_r\in\bb{R}$ \\ \midrule\midrule
$C'(n,2)$ &  $\zeta_{-r}=(-1)^r\overline{\zeta_r}\,,\;\forall r$ \\ \midrule
$D'(n,1)$ & $\zeta_{-r}=-\zeta_r\,\;\forall r$\\ \midrule
$D'(n,2)$ & $\zeta_{-r}=-\zeta_r\,,\;\forall r$ with $\begin{cases}\zeta_r\in\bb{R}& \mbox{if } r \mbox{ is even}\cr \zeta_r\in\ii\,\bb{R}& \mbox{if } $r$ \mbox{ is odd}\end{cases}$\\ 
\bottomrule
\end{tabular}
\caption{Conditions on Fourier coefficients in (\ref{eq:Fourier}) for each symmetry type.} \label{table:Fourier}
\end{table}

\subsection{Isotropy subgroup lattice}\label{sec:lattice}

For two subgroups $G,H$ of $\Gamma$, we write $H\prec G$ to mean that $H$ is subconjugate to $G$---that is, $H$ is conjugate to a subgroup of $G$.  It is a transitive relation.  

\begin{proposition}
The subgroups listed above satisfy the following subconjugacy relations (recall that $C(n,k/1)=C(n,k)$ and $D(n,k/1)=D(n,k)$)
\begin{enumerate}
\item $C(n,k/\ell)\prec D(n,k/\ell)$ 
\item $C(n,k/\ell)\prec C(n,k'/\ell')$ iff $k\divides k'$ and $\ell\equiv \pm\ell'\bmod k$
\item $D(n,k/\ell)\prec D(n,k'/\ell')$ iff $k\divides k'$ and $\ell\equiv \pm\ell'\bmod k$
\item $C'(n,2)\prec D'(n,2)$ and $D'(n,1)\prec D'(n,2)$
\item $C(n,1)\prec C'(n,2)$ and $C(n,1)\prec D'(n,1)$
\end{enumerate}
These are illustrated in Figure~\ref{fig:lattice}. Note that (2) and (3) hold with $k'=\infty$.
\end{proposition}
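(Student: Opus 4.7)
The plan is to handle the five claims in two groups, since (1), (4), and (5) follow immediately from the generator descriptions, while (2) and (3) require genuine computations in both directions. For (1), the generators $\chor$ and $g_0$ of $C(n,k/\ell)$ appear among the three generators of $D(n,k/\ell)$. For (4), the generator $g=(\kappa,\sigma_2,-\nfrac{1}{2n})$ of $C'(n,2)$ is a defining generator of $D'(n,2)$, while the generators $\chor,(R_\pi,s_1,\zerobar)$ of $D'(n,1)$ lie in $D'(n,2)$ via $\chor=g^2$ (valid because $\sigma_2^2=\sigma_1$ for odd $n$) and because the reflection is itself a defining generator of $D'(n,2)$. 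For (5), the identity $\chor=g^2$ places $\Chor_n<C'(n,2)$, and $\chor$ is by definition a generator of $D'(n,1)$.

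For the sufficiency direction of (2), I would write $k'=jk$ and first suppose $\ell'\equiv\ell\bmod k$. Then a direct calculation gives $g_0'^j=(R_{2\pi j\ell'/k'},e,j/k')=(R_{2\pi\ell'/k},e,1/k)=g_0$, and since $\chor$ is common to both groups, $C(n,k/\ell)<C(n,k'/\ell')$. If $\ell\equiv-\ell'\bmod k$, I would first conjugate by $(\kappa,e,0)$, which fixes $\chor$ and sends $g_0$ to the generator of $C(n,k/(k-\ell))$, reducing to the previous case. For (3), the same conjugation $(\kappa,e,0)$ also fixes the reflection $(\kappa,s_1,\zerobar)$, and since this element is a defining generator of every $D(n,k'/\ell')$, appending it to the $C$-inclusion yields the $D$-inclusion.

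For the necessity direction, let $(A,\pi,\theta)$ conjugate $C(n,k/\ell)$ into $C(n,k'/\ell')$. A direct computation (using $(k,\ell)=(k',\ell')=1$) shows that $\ker\rho=\Chor_n$ in both groups, so the image of $\Chor_n$ is a subgroup of order $n$ inside $\Chor_n$ and therefore equals $\Chor_n$. Hence the conjugate $(A,\pi,\theta)\chor(A,\pi,\theta)^{-1}=(I,\pi\sigma_1\pi^{-1},\pm\nfrac1n)$ lies in $\{\chor,\chor^{-1}\}$, forcing $\pi\sigma_1\pi^{-1}=\sigma_1^{\pm 1}$ and hence $\pi\in\Sigma_n^+$. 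The conjugate of $g_0$ then has the form $(R_{s_1\cdot 2\pi\ell/k},e,s_2/k)$, with independent signs $s_1,s_2\in\{\pm 1\}$ according as $A$ is or is not a reflection and $\theta$ is or is not time-reversing. Equating this to a general element $\chor^a g_0'^b=(R_{2\pi b\ell'/k'},\sigma_1^a,-a/n+b/k')$ of $C(n,k'/\ell')$ produces in sequence: $n\mid a$ from the permutation component; then $b\equiv s_2 k'/k\pmod{k'}$ from the time component, which requires $k\mid k'$ for integrality; and finally $\ell\equiv s_1s_2\ell'\pmod k$ from the rotation component, yielding the claimed $\ell\equiv\pm\ell'\bmod k$. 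Claim (3) reduces to (2) because conjugation preserves the orientation-type in each factor of $\Gamma\times\Sonehat$, so the image of $C(n,k/\ell)$ inside $D(n,k'/\ell')$ automatically lies in the non-reflection subgroup $C(n,k'/\ell')$. The main technical obstacle is the bookkeeping of the two sign choices $s_1,s_2$, but since only their product $s_1s_2$ enters the final congruence, all four cases collapse to the single $\pm$ appearing in the statement.
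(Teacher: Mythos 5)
Your proof is correct and follows essentially the same route as the paper, which only sketches the argument (``these all follow from the definitions\dots a simple case-by-case analysis''): forward inclusions by exhibiting generators, and the converse by conjugating generators and matching components. The one cosmetic difference is that the paper extracts $k\divides k'$ from $|C(n,k/\ell)|$ dividing $|C(n,k'/\ell')|$, whereas you obtain it from the integrality of the $\Sonehat$-component; both are valid, and your sign bookkeeping for $\ell\equiv\pm\ell'\bmod k$ is sound.
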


An example illustrating part (2) is that $C(4,5)\prec C(4,10/\ell')$ for $\ell'=1$ and $4$ while $C(4,5/2)\prec C(4,10/\ell')$ only for $\ell'=3$ as $\ell'=2$ is not coprime to $k'=10$. Part (3) is similar.

\begin{proof} These all follow from the definitions of the groups (also from the Fourier series representations above). That there are no other subconjugacies is a simple case-by-case analysis, using for example that $G=C(n,k/\ell)\prec G'=C(n,k'/\ell')$ requires $|G|$ divides $|G'|$, and so $k\divides k'$. Moreover, the generator of $C'(n,2)$ is not conjugate to any element of any $C(n,k/\ell)$, or indeed of any element of $D(n,\infty/\ell)$. 
\end{proof}

The importance of this lattice of isotropy subgroups for choreographies lies in the fact that $H\prec G$ is a necessary condition for there to be a sequence of choreographies with symmetry $H$ converging to a choreography with symmetry $G$.  For example, for $D(n,\infty/\ell)\prec D(n,k/\ell)$, the choreography determined by
$$z_1(t)=\ee^{2\pi \ii\ell t}(1+\eps\,\cos(2\pi k \ii t)),$$
which has symmetry $D(n,k/\ell)$ converges as $\eps\to0$ to the circular choreography given in Example~\ref{ex:circular motion}, which has symmetry $D(n,\infty/\ell)$. Note also that for $\eps\neq0$ the particles follow each other in numerical order with time lag $1/n$, while for $\eps=0$ and $\ell>1$ particle $j$ follows particle $j+m$ where $m\ell\equiv 1\bmod n$, but now with time lag $1/\ell n$.

\begin{figure} 
\begin{center}
\psset{yunit=1.2}
\begin{pspicture}(-4,-0.2)(4,4.5)
\psset{nodesep=2pt}
\rput(-2,4){\rnode{A}{$D(n,\infty/\ell')$}}
\rput(-3,2){\rnode{C}{$C(n,k'/\ell')$}} \rput(-2,3){\rnode{B}{$D(n,k'/\ell')$}}
\rput(-2,1){ \rnode{E}{$C(n,k/\ell)$}}  \rput(-1,2){\rnode{D}{$D(n,k/\ell)$}}
\ncline[linestyle=dashed]{-}{A}{B} \ncline{-}{B}{C} \ncline{-}{B}{D}
\ncline{-}{C}{E} \ncline{-}{D}{E}

\rput(1,1.5){\rnode{K}{$C'(n,2)$}} \rput(3.2,1.5){\rnode{L}{$D'(n,1)$}}
\rput(2.2,3){\rnode{M}{$D'(n,2)$}}
\rput(0,-0.2){\rnode{X}{$C(n,1)=\Chor_n$}}
\ncline{-}{E}{X} \ncline{-}{K}{X} \ncline{-}{L}{X} 
\ncline{-}{L}{M} \ncline{-}{M}{K} 
\end{pspicture}
\end{center}
\caption{Lattice of isotropy subgroups, with $k\divides k'$ and $\ell'\equiv\pm\ell\bmod k$.}
\label{fig:lattice}
\end{figure}
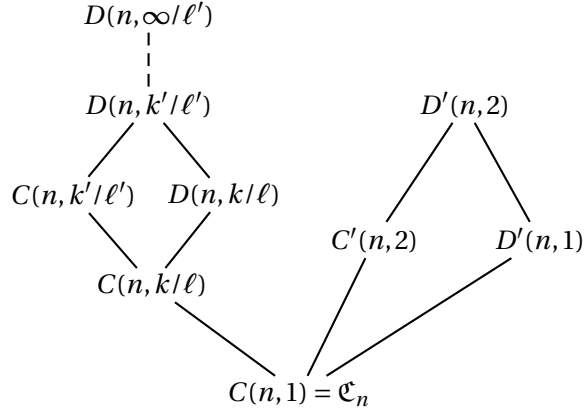

\subsection{Rotating circle condition} \label{sec:rcc}
In their very interesting paper \cite{FT04}, Ferrario and Terracini introduce the \emph{rotating circle condition}, and show by a clever perturbation argument that if the action of $G$ satisfies this condition then a collision path in the loop space cannot be a local minimum of the restriction of the Newtonian action functional  to the set of loops with symmetry $G$.  This condition is defined for choreographies in $\RR^d$, and in $\RR^2$ it reduces to the following:

A symmetry group $G$ satisfies the rotating circle condition if for each $t\in[0,1]$, one has 
\begin{enumerate}
\item $\rho(G_{t}) < \SO(2)$, and 
\item $\rho(G_{t,i})=\trivial$, for at least $n-1$ of the indices $i$.  
\end{enumerate}
Here $G_t$ is the subgroup of $G$ that fixes $t$ (under $\tau$), and $G_{t,i}$ is the subgroup of $G_t$ that fixes the index $i$ (under $\sigma$). Of course $\trivial$ is the trivial group.

\begin{proposition}\label{prop:rcc}
The groups $C(n,k/\ell)$, $C'(n,2)$ and $D'(n,1)$ all satisfy the rotating circle condition, while the remaining groups $D(n,k/\ell)$ and $D'(n,2)$ do not.
\end{proposition}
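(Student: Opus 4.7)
The plan is to verify the two RCC conditions separately for each of the five families, using the following general fact: for any $t \in [0,1]$ the stabilizer $G_t$ contains $\ker\tau$, and equals $\ker\tau$ unless $t$ is fixed by some time-reversing element of $G$ (in which case $G_t$ doubles in size by adjoining one coset of $\ker\tau$). So the check reduces to understanding $\ker\tau$ together with the action of the time-reversing elements of $G$.

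\emph{The three affirmative cases.} For $C(n,k/\ell)$, there are no time-reversing elements, so $G_t = \ker\tau$ for all $t$. By Remark~\ref{rmks:symmetry groups}(1), this is generated by the core element $(R_{2\pi\ell/c}, \sigma_1^{n/c}, 0)$ with $c=(n,k)$, whose $\rho$-image lies in $\SO(2)$, giving Condition~1. All its non-identity powers have permutation $\sigma_1^{jn/c}$ ($1\le j<c$) of order $c/\gcd(c,j)>1$, hence consist only of cycles of length~$>1$ and so fix no index; thus $G_{t,i}=\trivial$ for every $i$, giving Condition~2. For $C'(n,2)$, the projection $\tau$ is injective (since the $\tau$-component $-1/(2n)$ has order $2n=|G|$), so $G_t=\trivial$ for every $t$ and RCC holds vacuously. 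For $D'(n,1)$, the generic stabilizer is trivial, while at $t=j/(2n)$ or $t=j/(2n)+1/2$ (for $0 \le j < n$), $G_t$ has order two with non-identity element $(R_\pi, s_1\sigma_1^j, \overline{j/n})$; Condition~1 holds since $R_\pi\in\SO(2)$, and Condition~2 follows because $n$ odd makes $2$ invertible mod $n$, so the permutation $s_1\sigma_1^j:k\mapsto 2-j-k$ has a unique fixed point, leaving $n-1$ indices with trivial $G_{t,i}$.

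\emph{The two negative cases.} In each I will exhibit an element of $G$ whose $\tau$-part is a reflection (hence fixes some $t$) and whose $\rho$-part is also a reflection, directly violating Condition~1. For $D(n,k/\ell)$, the defining element $(\kappa, s_1, \bar 0)$ fixes $t=0$ and has $\rho=\kappa\notin\SO(2)$. For $D'(n,2)$, the product $(R_\pi, s_1, \bar 0)\cdot (\kappa,\sigma_2,-1/(2n))$ has $\rho$-part $R_\pi\kappa=\kappa_{\pi/2}\notin\SO(2)$ and $\tau$-part $\overline{1/(2n)}$, which fixes $t=1/(4n)$. The computations are largely routine; the only subtlety is Condition~2 for $D'(n,1)$, where one needs $n$ odd so that $s_1\sigma_1^j$ has \emph{exactly} one fixed point (rather than none or two), so that precisely one index is ``blocked'' and the remaining $n-1$ satisfy $\rho(G_{t,i})=\trivial$.
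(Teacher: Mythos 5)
Your proposal is correct and follows essentially the same route as the paper's proof: identify $G_t$ with the core for $C(n,k/\ell)$ and check its generator, note that $G_t$ is trivial for $C'(n,2)$, use the order-two stabilizers generated by conjugates of $(R_\pi,s_1,\zerobar)$ for $D'(n,1)$ (with $n$ odd forcing exactly one fixed index of $s_1\sigma_1^j$), and exhibit the reflections $(\kappa,s_1,\zerobar)$ and $(R_\pi\kappa,s_1\sigma_2,\overline{1/2n})$ to violate Condition~1 in the two negative cases. The only differences are presentational: you verify Condition~2 for $C(n,k/\ell)$ by directly checking that nontrivial powers of $\sigma_1^{n/c}$ are fixed-point-free, where the paper appeals to $\ker\tau\cap\ker\sigma=\trivial$, and you track all values of $t$ for $D'(n,1)$ rather than just $t=0$.
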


\begin{proof}
For $G=C'(n,2)$ this is immediate as $G_t$ is trivial for all $t$. The same is true of $C(n,k/\ell)$ when $n,k$ are coprime. More generally, for $G=C(n,k/\ell)$, $G_t$ is the core generated by $g_c=(R_{2\pi\ell/c},\sigma_1^{n/c},0)$ where $c$ is the order of the core ($\ker\tau$)---see Remarks~\ref{rmks:symmetry groups}. The first condition is clearly met as $\rho(g_c)=R_{2\pi\ell/c}\in\SO(2)$, while the second follows from the fact that $\ker\tau\cap\ker\sigma$ is trivial (Proposition~\ref{prop:kernels}). For $D'(n,1)$ we have $G_t$ is of order at most 2, with $G_0$ generated by $g_0=(R_\pi,s_1,\zerobar)$.  Here $\rho(g_0)\in\SO(2)$, and $\sigma(g_0)=s_1$ which fixes precisely one index (namely, 1 --- recall $n$ is odd for $D'(n,1)$).

For the two remaining types, we have elements $(-\kappa, s_1\sigma_2, \overline{1/2n})\in D'(n,2)$ and $(\kappa,s_1,\zerobar)\in D(n,k/\ell)$, both of which violate the rotating circle condition (here $-\kappa=R_\pi\kappa$ is the reflection in the vertical axis).
\end{proof}

\section{Proof of Classification Theorem} \label{sec:main proof}

In this section we prove Theorem \ref{thm:classification} and we rely extensively on the notation introduced in Sec.~\ref{sec:notation}.  Let $u$ be a choreography of period 1, with finite symmetry $G$. We also assume $u$ does not have period less than 1 (if it has minimal period $T<1$ then replace $u(t)$ by $u(t/T)$ which then has period 1, and if necessary relabel the particles).  By Proposition~\ref{prop:kernels}(4), $G<\OO(2)\times\Sigma_n^+\times\Sonehat$, and  $\Chor_n\lhd G$.   The proof is in two halves: first we assume that the symmetry group has trivial core (so $\tau$ is injective) and then we reduce the general case to the first, by considering the quotient by a free group action.  As usual $\tau:G\to \Sonehat$ is the projection

\subsection{Trivial core} \label{sec:no core proof}
Recall that the core of the symmetry group is $\ker\tau$.  So first we assume $G$ is such that $\tau$ is injective.  This means that $G\simeq\tau(G)<\Sonehat$, so that $G$ is isomorphic to a cyclic or a dihedral group. For $\OO(2)$ and $\Sonehat$ there are two non-conjugate of subgroup of order 2: the first is the order two subgroup of $\SO(2)$ or $\Sone$, which we denote $C_2$ and call cyclic, while the second is generated by a `reflection' in $\OO(2)$ or $\Sonehat$, which we refer to as dihedral and denote it $D_1$.

\subsubsection{Non-reversing symmetry groups}
We are assuming $\tau(G)\subset \Sone$. 
Since $\tau$ is injective, it follows that $G$ is isomorphic to a subgroup of $\Sone$ so is a cyclic group and since it contains $\Chor_n$ its order is a multiple of $n$. Suppose then that $G\simeq\bb{Z}_{kn}$, where $k$ is a positive integer.

Let $g_0$ be the element of $G\simeq\bb{Z}_{kn}$ with $\tau(g_0) = \nfrac{1}{nk}$: it is a generator of $G$. Then 
$$g_0 = ( \rho(g_0), \sigma(g_0), \nfrac{1}{nk})$$ 
for some $\sigma(g_0)\in\Sigma_n^+$ and $\rho(g_0)\in\OO(2)$.  Consider $g_0^k$. Now $g_0^k=(\rho(g_0)^k, \sigma(g_0)^k, \nfrac{1}{n})$, and since $\tau$ is injective this must be equal to $\chor$. 
It follows that $\rho(g_0)^k=I$ and $\sigma(g_0)^k=\sigma_1$. 

The equation $\sigma(g_0)^k=\sigma_1$ has a solution if and only if $(n,k)=1$.  In that case the solution is unique and is, by definition (see Sec.~\ref{sec:notation}), $\sigma(g_0)=\sigma_k$.

Now consider $\rho(g_0)$. 
Since $\ker\rho=\Chor_n$ (Corollary \ref{cor:kerrhochoreo}) we have a short exact sequence,
$$ \trivial \to \Chor_n \lra \bb{Z}_{kn} \stackrel{\rho}{\lra} \bb{Z}_k\to\trivial,$$
where  $\bb{Z}_k < \OO(2)$.
\begin{itemize}
\item If $k=1$, then $\rho(g_0)=I$ so $G = \Chor_n = C(n,1)$.
\item If $k>2$, then we have $\bb{Z}_k<\OO(2)$ is generated by $R_{2\pi/k}$. 
The element $\rho(g_0)$ generates $\bb{Z}_k$, and so must be equal to $R_{2\pi\ell/k}$ for some $\ell$ coprime to $k$.
In this case, we have $G = C(n,k/\ell)$ with $k$ coprime to both $n$ and $\ell$.
\item In the case where $k=2$, we have two possibilities. Either $\bb{Z}_k$ is $C_2$, generated by $R_\pi$, in which case we have $\rho(g_0) = R_\pi$ and $G = C(n,2)$, or it is dihedral $D_1$, generated by $\kappa$ (or a conjugate), in which case $\rho(g_0) = \kappa$ and we have $G = C'(n,2)$.
\end{itemize}

\subsubsection{Reversing symmetry groups} 

This is a bit more involved. Since we are assuming the core is trivial, $G\simeq\tau(G)$, and $G$ is therefore a dihedral group.  Since $\tau(G)$ contains $\nfrac1{n}\in\Sone$ it must be $D_{kn}$ for some $k$.  Let $G_0 = \tau^{-1}(\ZZ_{nk})$ be the index-2 subgroup of $G$ consisting of the non-reversing elements.  Let $r\in G$ be an order 2 element satisfying $rg=g^{-1}r$ for $g\in G_0$, so $r$ together with $G_0$ generates $G$. Up to conjugacy by $\Sone$, we can suppose $\tau(r)=\zerobar$. Furthermore, since $r$ is of order 2, it follows that $\rho(r)$ and $\sigma(r)$ are of order 1 or 2.

For $\sigma(r)$, the dihedral condition implies $r$ anticommutes with $\chor$ and so in particular,
$$\sigma(r) \sigma_1 \sigma(r)^{-1} = \sigma_1^{-1}.$$
As $\sigma_1$ generates an abelian group of order $n>2$, $\sigma(r)$ cannot be in this group. Hence it must be a reflection on the points.  Thus for odd $n$, $\sigma(r)\in\Sigma_n^+$ must be conjugate to $s_1$, which fixes one particle (namely, $z_1$), while for even $n$, there are two possibilities:
$$\sigma(r) = 
  \begin{cases} s_{12} & \textrm{fixing no particles} \\ 
                  s_1 & \textrm{fixing } z_1 \textrm{ and } z_{1+n/2},  \end{cases}
$$
but with $\sigma_1$ both generate the same dihedral group $\Sigma_n^+$.

For the $\rho$ component, since $\rho(r)$ is an element of order at most 2, we must have $\rho(r) = I, R_\pi$ or a reflection, and this is to be combined with $G_0=C(n,k/\ell)$ or, if $n$ is odd, $C'(n,2)$, from the first---non-reversing---part of the proof.   However, $\rho(r)=I$ would give an element of $\ker\rho$ whose temporal component is not in $\Sone$, contradicting Proposition~\ref{prop:kernels}(2). There remains to consider $\rho(r)=R_\pi$ or a reflection.

The dihedral condition on $r$ gives,
\begin{equation}\label{eq:dihedral on rho}
\begin{cases}
\rho(r) R_{2\pi\ell/k} \rho(r)^{-1} = R_{-2\pi\ell/k}& \mbox{if }G_0=C(n,k/\ell)\\
\rho(r)\kappa\rho(r)^{-1} = \kappa& \mbox{if }G_0=C'(n,2).
\end{cases}
\end{equation}

Consider each case in turn.  First suppose $G_0=C(n,k/\ell)$. 

\begin{itemize}

\item If $\rho(r)=R_\pi$, it would commute with $R_{2\pi\ell/k}$ in the above which is only possible if $k\leq2$ (recall that  $(k,\ell)=1$). Then $G$ contains the element $g=(R_\pi,s,\zerobar)$. If $k=2$ then combining $g$ with $(R_\pi,e,\nfrac12)\in C(n,2)$ produces the element $(I, s, \overline{\nfrac12})$ which again contradicts Proposition~\ref{prop:kernels}(2). 
If on the other hand $k=1$ we adjoin the element $(R_\pi,e,\zerobar)$ to $C(n,1)=\Chor_n$ which gives the group $D(n,1)$.

\item If $\rho(r) = \kappa$ then $G$ contains $(\kappa,s,\zerobar)$, with $s=s_1$ or $s_{12}$ as before, in which case $G$ is of type $D(n,k/\ell)$: note that together with $C(n,k/\ell)$ the element $(\kappa,s,\zerobar)$ with the two possible values of $s$ generates conjugate subgroups of $\Gamma\times\Sonehat$.
\end{itemize}

Now suppose $G_0=C'(n,2)$ which is generated by $g'=(\kappa, \sigma_2, \nfrac{1}{2n})$ with $n$ odd. By (\ref{eq:dihedral on rho}), $\rho(r)$ commutes with $\kappa$, and so is either $R_\pi$ or one of the reflections $\kappa$ or $\kappa':=R_\pi\kappa$ (reflection in the line orthogonal to $\Fix(\kappa)$). Moreover, since $n$ is odd $\sigma(r)=s_1$ (up to conjugacy, or relabelling the particles). 
\begin{itemize}
\item If $\rho(r)=R_\pi$ then $r=(R_\pi,s_1,\zerobar)$ so giving the group $G=D'(n,2)$.
\item If $\rho(r)=\kappa$ then $rg'\in\ker\rho$ but $\tau(rg')\not\in\Sone$ so contradicting  Proposition~\ref{prop:kernels}(2).
\item Finally, if $\rho(r)=\kappa'$ then $\rho(r\kappa)=\kappa'\kappa=R_\pi$ and we are in the case $G=D'(n,2)$ again.
\end{itemize}

We have therefore considered every case with a trivial core, and shown each either gives one of the groups of the classification in Section~\ref{sec:symmetry types} or by contradicting Proposition~\ref{prop:kernels} leads to a collision so the trivial core case is complete. We next consider the cases with non-trivial core.

\begin{figure}[t]
\centering
\subfigure[$D(4,6)$ which has core of order 2, and \dots]%
 {\includegraphics[scale=0.2]{Pics/D461.eps}}
 \qquad \qquad
\subfigure[\dots its image in $X_*^{(2)}$]%
 {\includegraphics[scale=0.18]{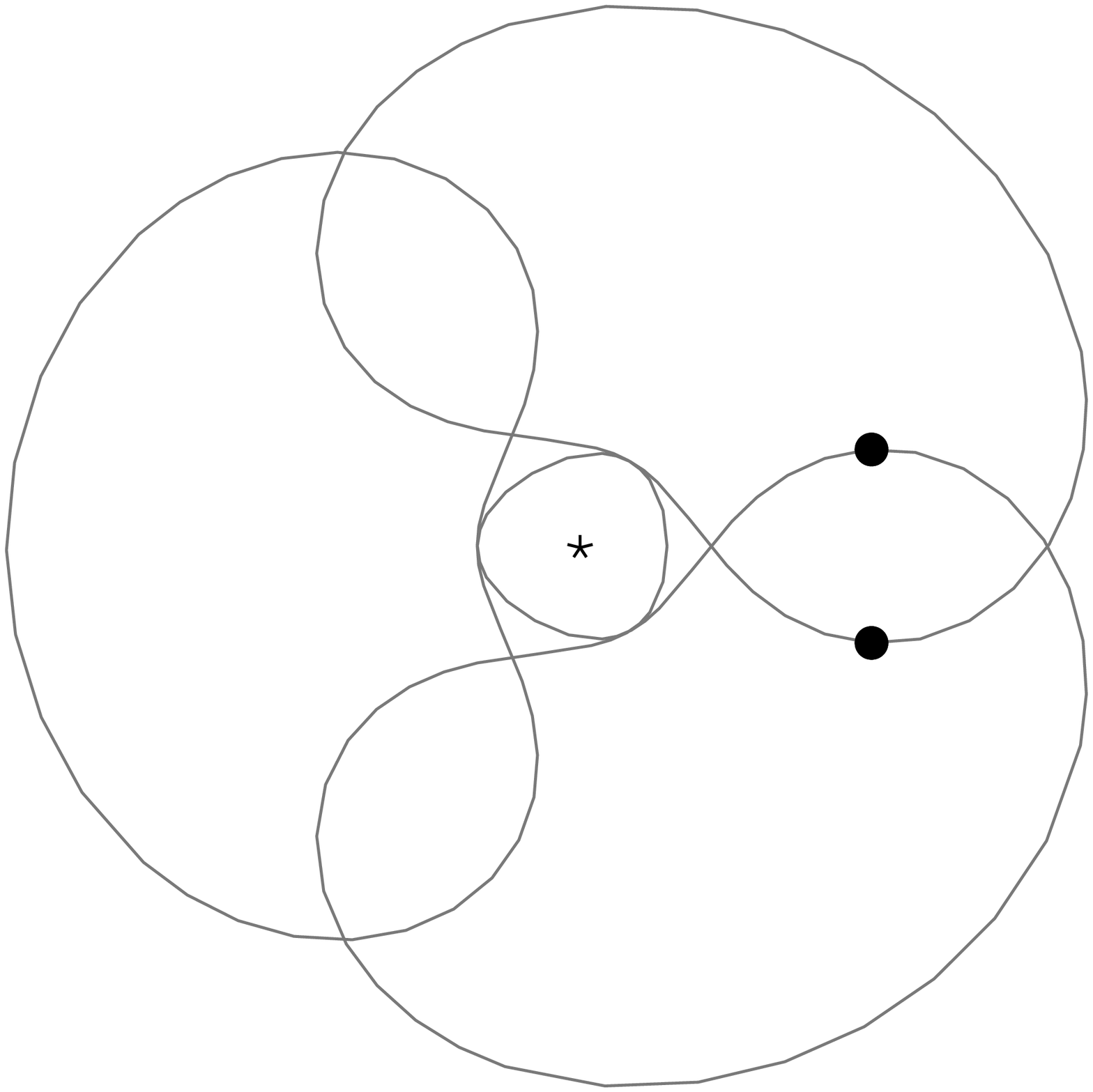}}
\caption{A choreography $u$ for the 4-body problem with 6-fold symmetry, which has core of order 2, and its image $\uhat$ under the map $\Psi$. 
}
\label{fig:D(4,6,1)}
\end{figure}

\subsection{Non-trivial core}
\label{sec:non-trivial core}

Suppose now the choreography $u$ has symmetry group $G$ with non-trivial core $K:=\ker\tau$. It follows that $u(t)\in\Fix(K,\Xn)$ for all $t$.  By Proposition \ref{prop:kernels},  $K$ is a cyclic group of order $c$ say and is generated by an element of the form $g_c:=(R_{2\pi r/c},\, \sigma_1^{\hat n},\, 0)\in\Gamma\times\Sonehat$ for some $r$ coprime to $c$ (and we may assume $1\leq r \leq c/2$ up to conjugacy).   Throughout the paper we put $\nhat = n/c$ and for given $K$ we write $Y=Y_K=\Fix(K,\Xn)$.   For the choreography to have core $K$ it follows that, for all $j$,
\begin{equation} \label{eq:isotropy}
z_{j+\nhat} = \ee^{2\pi\ii r/c}z_j.
\end{equation}

We begin with an elementary lemma. Recall that the centre of mass is fixed at the origin.

\begin{lemma} \label{lemma:avoiding origin}
Any choreography with non-trivial core does not pass through the origin. 
\end{lemma}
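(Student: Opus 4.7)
The plan is to derive a collision from the assumption that some particle of the choreography passes through the origin, using only the defining relation \eqref{eq:isotropy} of the core symmetry.

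Concretely, suppose for contradiction that $z_{j_0}(t_0)=0$ for some index $j_0$ and some time $t_0$. Since $K=\ker\tau$ is non-trivial, we have $c=|K|\geq 2$, and hence the integer $\nhat=n/c$ satisfies $1\leq \nhat<n$, so the indices $j_0$ and $j_0+\nhat\pmod n$ are genuinely distinct.

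Now apply the core relation \eqref{eq:isotropy} at time $t_0$ with $j=j_0$: we get
$$z_{j_0+\nhat}(t_0)=\ee^{2\pi\ii r/c}\,z_{j_0}(t_0)=\ee^{2\pi\ii r/c}\cdot 0 = 0.$$
Thus two distinct particles are simultaneously at the origin at time $t_0$, which is a collision. This contradicts the standing assumption that $u$ is collision-free, and completes the proof.

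The argument is essentially a one-line application of the core relation, so I do not anticipate any genuine obstacle; the only thing to verify carefully is that $\nhat\not\equiv 0\bmod n$, which is exactly the statement that the core is non-trivial ($c>1$). Note that we did \emph{not} need to invoke the fact that all particles follow the same curve, nor the choreography relation \eqref{eq:choreography defn} itself --- the conclusion follows purely from the isotropy condition imposed by $K$.
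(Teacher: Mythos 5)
Your proof is correct and is essentially identical to the paper's own argument: assume a particle is at the origin, apply the core relation \eqref{eq:isotropy} to conclude that the particle with index shifted by $\nhat$ is simultaneously at the origin, and note that non-triviality of the core ($c>1$) makes these two indices distinct, yielding a collision. The extra care you take in checking $1\leq\nhat<n$ is a welcome (if minor) addition to what the paper states.
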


\begin{proof}
Let $g_c= (R_{2\pi r/c},\, \sigma_1^{n/c},\, 0)$ generate the core, and suppose for contradiction that the curve does pass through the origin: say $z_1(t_0)=0$.  Then $z_{\nhat+1}(t_0)=\ee^{2\pi\ii r/c}0 = 0$, resulting in a collision.
\end{proof}

The classification of choreographies in $Y$ proceeds by identifying $Y$ as a smooth finite cover of $\Xnhat$, the configuration space of $\nhat$ distinct particles in the \emph{punctured} plane $\CC\setminus\{0\}$, thereby reducing the classification of choreographies with core to those with no core, but now in $\Xnhat$.  Explicitly, the covering is given by
\begin{equation}\label{eq:Psi}
\begin{array}{rcl}
\Psi:\Fix(K,\Xn)&\lra& \Xnhat\\
(z_1,\dots,z_n)&\longmapsto& (z_1^c,\dots,z_{\nhat}^c)
\end{array}
\end{equation}
Note that it follows from the lemma that the $z_j\neq0$.  Solving \eqref{eq:Psi} and using \eqref{eq:isotropy} shows that $\Psi$ is a smooth covering of degree $c^{\nhat}$.

Let $u(t)=(z_1(t),\dots,z_n(t)$ be a choreography with core $K$, and define the loop $\uhat$ in $\Xnhat$ by
$$\uhat(t) = (w_1(t),\dots,w_{\nhat}(t)) = \Psi(u(t/c)),$$
so that $w_j(t) = z_j(t/c)^c$.  Combining \eqref{eq:isotropy} with the choreography symmetry of $u$ shows each $w_j$ is of period 1, and the loop $\uhat$ is indeed a choreography.  Furthermore, if the core of $u$ is precisely $K$ then the core of $\uhat$ is trivial, so we can apply the classification we have so far obtained (\S\ref{sec:no core proof}) to $\uhat$.

We can immediately rule out the exceptional groups: $D'(n,1)$ and $D'(n,2)$ as they necessarily pass through the origin (which is excluded by the lemma above).  We can also rule out $C'(n,2)$ as its winding number is necessarily zero (see Remarks~\ref{rmks:symmetry groups}(8)), while we have:

\begin{lemma}
Any choreography $u$ with non-trivial core has non-zero winding number around the origin.
\end{lemma}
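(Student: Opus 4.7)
The plan is to extract from the core and choreography conditions a functional equation for $z_1$, then compute its winding number via a continuous argument lift. Since $z_{j+\nhat}(t)=\ee^{2\pi\ii r/c}z_j(t)$ is a rigid rotation of $z_j$'s trajectory, and all particles follow the same curve, every particle has the same winding number around the origin (which is well-defined by the previous lemma); it therefore suffices to compute the winding number of $z_1$.

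Combining the choreographic identity $z_{j+1}(t)=z_j(t+1/n)$ applied $\nhat=n/c$ times with \eqref{eq:isotropy} yields the key functional equation
$$z_1\bigl(t+\nfrac{1}{c}\bigr) \;=\; z_{\nhat+1}(t) \;=\; \ee^{2\pi\ii r/c}\,z_1(t), \qquad \forall\,t.$$
Since $z_1$ never vanishes, I would choose a continuous lift $\theta:[0,1]\to\RR$ of $\arg z_1$. The continuous function $t\mapsto \theta(t+1/c)-\theta(t)-2\pi r/c$ takes values in $2\pi\ZZ$, and hence equals a constant $2\pi k$ for some $k\in\ZZ$. Iterating this relation $c$ times then gives $\theta(1)-\theta(0)=2\pi(r+ck)$, so the winding number of $z_1$ about $0$ is $w=r+ck$ for some integer $k$.

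Finally, since we took $1\leq r\leq c/2$, the integer $r$ is strictly between $0$ and $c$, and so is not a multiple of $c$; hence $w=r+ck\neq 0$ regardless of the value of $k$. The main obstacle, such as it is, lies in writing down the functional equation correctly: one must convert the spatial rotation $R_{2\pi r/c}$ appearing in the core generator $g_c$ into a temporal shift by $1/c$ using the choreographic relation. Once this bookkeeping is done, the computation of the winding number via a continuous argument lift is routine.
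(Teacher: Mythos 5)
Your proof is correct and follows essentially the same route as the paper's: both derive the relation $z_1(t+\nfrac{1}{c})=\ee^{2\pi\ii r/c}z_1(t)$ from the core and choreography conditions and conclude that the winding number is congruent to $r\bmod c$, hence non-zero since $r$ is coprime to $c>1$. Your continuous argument-lift is just a rephrasing of the paper's integration of $dz/z$, and if anything is slightly more careful about the integer ambiguity $2\pi k$ in each $1/c$-step.
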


\begin{proof} Let $u(t)=(z_1(t),\dots,z_n(t))$, and consider the underlying curve $z(t)=z_1(t)$ say. The winding number is given by integrating $dz/z$ around the curve. First integrate $dz/z$ from $t=0$ to $t=1/c$.  This is equivalent to $\ln(z(1/c) - \ln(z(0)) \bmod 2\pi\ii$, which is $2\pi\ii(r/c \bmod 1) \neq0$, using the choreography symmetry and \eqref{eq:isotropy}.  The integral from $t=0$ to $t = 1$ is therefore $2\pi\ii(r\bmod c) \neq0$, and so the winding number itself is equivalent to $r\bmod c$, so we are done.
\end{proof}

There remain the possibilities of $\uhat$ having symmetry $C(\nhat,\khat/\ellhat)$ or  $D(\nhat,\khat/\ellhat)$, for some integers $\khat,\ellhat$ with $(\nhat,\khat)=(\khat,\ellhat)=1$. So the question remains, given one of these symmetries for $\uhat$, what are the possible symmetries of $u$?  

Suppose first that $\uhat$ has symmetry $C(\nhat,\,\khat/\ellhat)$, for $\ellhat$ coprime to $\khat$. We claim that $u$ has symmetry (conjugate to) $C(n,k/\ell)$ for some $\ell$ necessarily coprime to $k=c\khat$ and with $\ell=\pm\ellhat\bmod\khat$.  Note that since $c\divides n$ and $(\nhat,\khat)=1$ it follows that $(n,k)=c$, consistent with the fact that the core is of order $c$. 

Now $C(\nhat,\,\khat/\ellhat)$ is generated by the choreography element $\hat\chor=(I,\sigmahat_1, -\nfrac1{\nhat})$ and $\widehat g_0 = (R_{2\pi\ellhat/\khat}, e, \nfrac1{\khat})$.  Explicitly, for each $j$ and each $t$,
$$\ee^{2\pi\ii \ellhat/\khat}w_j\left(t-\tfrac1{\khat}\right) = w_j(t).$$
Taking $c^{\textrm{th}}$ roots and replacing $t$ by $t/c$ gives
$$ 
  \ee^{2\pi\ii\ellhat/k}z_j\left(t-\tfrac{c}{nk}\right) = \ee^{2\pi\ii h/c}z_j(t), 
$$
for some integer $h$ (independent of $j$ and $t$ by the choreography symmetry), with $k=c\khat$. That is, the symmetry group of $u$ contains, in addition to $K$ and $\Chor_n$, the group generated by 
$$g=(R_{2\pi(\ellhat-h\khat)/k},\,e,\,\nfrac{c}{nk}).$$
This is the symmetry group $C(n,k/\ell)$ with $\ell=\ellhat+h\khat$ as required. Note that $r$ in the core is related to $\ell$ by $r\equiv \ell\bmod c$.

Finally suppose $\uhat$ has symmetry $D(\nhat,\,\khat/\ellhat)$. Then it also has symmetry $C(\nhat,\,\khat/\ellhat)$, and so $u$ has symmetry $C(n,k/\ell)$ as above.  Moreover, $\uhat$ has symmetry $(\kappa,s_1,\bar 0)$. That is,
$$\bar w_{2-j}(-t) = w_j(t).$$
Here the index $2-j$ is modulo $\nhat$.  Lifting this to $u$ (taking $c^{\textrm{th}}$ roots and replacing $t$ by $t/c$ as above) gives 
$$\bar z_{2-j+f\nhat}(-t) = z_j(t),$$
for some $f$. This leads to a subgroup conjugate to $D(n,k/\ell)$, and concludes the proof of Theorem~\ref{thm:classification}. \hfill$\Box$


\section{Connected components of spaces of symmetric loops}
\label{sec:components of loops space}

Let us fix notation.  Throughout the remainder of this paper, $X$ will denote a smooth manifold, and $\Gamma$ will be a Lie group acting smoothly on $X$.  (Although we assume $X$ to ba a manifold, the results hold more generally.) 
The loop space $\Lambda X$ on $X$ is the set of all continuous maps $u:\bb{T}\to X$, where we identify the circle $\bb{T}$ with $\bb{R}/\ZZ$.  All loop spaces are given the compact-open topology.
There is then a natural action of $\Gamma\times\Sone$ on the loop space $\Lambda X$ given by
$$((g,\theta)\cdot u)(t) = g\cdot u(t-\theta).$$
  We will consider the natural action of $\Gamma \times\Sonehat$ in a later paper.  For the remainder of this paper we change notation slightly compared to the earlier sections.  
Instead of writing $G< \Gamma\times\Sone$ as hitherto, we now let $G$ be a subgroup of $\Gamma$, and the corresponding subgroup of $\Gamma\times\Sone$ is given as the graph of a homomorphism $\tau:G\to \Sone$; we denote this subgroup by $G_\tau$.

In this section we analyse the sets of connected components of the spaces $\Fix(\Gt,\;\Lambda X)$ for subgroups $\Gt$ of $\Gamma\times\Sone$.  In the following section we apply these results to spaces of choreographies with $C(n,k/\ell)$ and $C'(n,2)$ symmetry. In a future paper we will consider the reversing symmetry groups in $\Gamma\times\Sonehat$, with applications to the remaining symmetry types of choreography.  

The topological analysis in this section is based on \cite{MMRS05}, but expressed in more ``equivariant'' terms.

\subsection{Symmetric loops}
We assume throughout that $X$ is a manifold with an action of a Lie group $\Gamma$, and we let $\tau:G\to \Sone$ be a given homomorphism, and write $K=\ker\tau\lhd G$. The graph $\Gt$ of $\tau$ is of course the subgroup
$\{(g,\,\tau(g))\in\Gamma\times\Sone\mid g\in G \}$.  A loop $u\in\Lambda X$ is said to have symmetry $G_\tau$ if $u\in \Fix(G_\tau,\Lambda X)$.  This means $(g,\tau(g))\cdot u = u$ for all $g\in G$, or $g\cdot u(t) = u(t+\tau(g))$.

The following lemma is easy to check. 

\begin{lemma}
The action of\/ $G_\tau$ on $\Lambda X$ restricts to an action of\/ $(G/K)_\tau$ on $\Lambda(X^K)$, where $X^K=\Fix(K,\,X)$.  Moreover $\Fix(G_\tau,\Lambda X) = \Fix((G/K)_\tau,\Lambda(X^K))$.
\end{lemma}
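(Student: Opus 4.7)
The plan is to break the lemma into two independent assertions and check each by direct manipulation of the action formula $((g,\theta)\cdot u)(t)=g\cdot u(t-\theta)$. Throughout I will use the fact that $K=\ker\tau$ is normal in $G$ (being the kernel of a homomorphism) and that the homomorphism $\tau$ descends to an injection $\bar\tau\colon G/K\hookrightarrow\Sone$ whose graph is $(G/K)_\tau$.

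First I would show that $G_\tau$ preserves $\Lambda(X^K)\subset\Lambda X$. Given $u\in\Lambda(X^K)$ and $(g,\tau(g))\in G_\tau$, set $v=(g,\tau(g))\cdot u$, so $v(t)=g\cdot u(t-\tau(g))$. For any $k\in K$,
\[
k\cdot v(t)=k g\cdot u(t-\tau(g))=g\cdot\bigl(g^{-1}kg\bigr)\cdot u(t-\tau(g))=g\cdot u(t-\tau(g))=v(t),
\]
where the second-last equality uses $g^{-1}kg\in K$ (normality of $K$) together with $u(t-\tau(g))\in X^K$. Hence $v\in\Lambda(X^K)$. Next I would observe that for $k\in K$ the element $(k,\tau(k))=(k,0)\in G_\tau$ acts on any $u\in\Lambda(X^K)$ by $((k,0)\cdot u)(t)=k\cdot u(t)=u(t)$, so $K\times\{0\}$ acts trivially and the action descends to the quotient $G_\tau/(K\times\{0\})\simeq(G/K)_\tau$.

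For the equality of fixed-point sets I would argue by double inclusion. If $u\in\Fix(G_\tau,\Lambda X)$, then applying $(k,0)\in G_\tau$ for each $k\in K$ gives $k\cdot u(t)=u(t)$ for all $t$, so $u\in\Lambda(X^K)$, and the remaining invariance conditions $(g,\tau(g))\cdot u=u$ for $g\in G$ project to invariance under $(G/K)_\tau$. Conversely, if $u\in\Fix((G/K)_\tau,\Lambda(X^K))$, then $u(t)\in X^K$ for all $t$ ensures $K\times\{0\}$ fixes $u$, and invariance under any coset representative $\bar g\in G/K$ lifts back to invariance under every element of $G_\tau$, since the formula $(g,\tau(g))\cdot u=u$ depends only on the class of $g$ modulo $K$ once $u\in\Lambda(X^K)$.

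No step looks genuinely difficult; the one place where care is required is verifying that $G_\tau$ preserves $\Lambda(X^K)$, since that is the only point where the normality of $K$ enters in a non-obvious way. Everything else is bookkeeping with the definitions.
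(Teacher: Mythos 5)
Your proof is correct, and since the paper simply states that this lemma ``is easy to check'' without supplying an argument, your direct verification is exactly the intended one: the normality of $K=\ker\tau$ gives that $G_\tau$ preserves $\Lambda(X^K)$, the triviality of the $(K\times\{0\})$-action on $\Lambda(X^K)$ gives the descent to $(G/K)_\tau$, and the two inclusions of fixed-point sets follow by the bookkeeping you describe. No gaps.
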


For $g\in \Gamma$ let 
$$\Lambda^gX = \left\{\gamma:[0,1]\to X\mid \gamma(1)=g\cdot \gamma(0)\right\}$$
be the space of \defn{relative loops with phase} $g$, or more briefly of \defn{$g$-loops}. In particular, $\Lambda^eX=\Lambda X$. 

For each $g\in \Gamma $ and each positive integer $r$, define a map $\phi_r:\Lambda^gX\to\Lambda^{g^r}X$ by concatenating $\gamma$ with its image under $g$ and then with its image under $g^2$ etc, up to $g^{r-1}$. That is,
\begin{equation}\label{eq:defn of phi_r}
\phi_r(\gamma)(t) = \begin{cases}\gamma(rt)& \text{if } t\in[0,\,1/r] \\
        g\gamma(rt-1) & \text{if }  t\in[1/r,\,2/r] \\
         \quad\vdots\\
        g^{r-1}\gamma(rt-r+1) & \text{if }  t\in[(r-1)/r,\,1].
\end{cases}
\end{equation}
Note that indeed $\phi_r(\gamma)(0)=\gamma(0)$ and $\phi_r(\gamma)(1)=g^r\gamma(0)$. Furthermore, if $g^r=e$ then $\phi_r(\gamma)\in\Fix(H,\Lambda X)$, where $H$ is the cyclic group generated by $(g,\nfrac{1}{r})\in \Gamma \times \Sone $.

\begin{proposition} \label{prop:fix is Lambda^g}
Suppose $\tau(G)$ is of order $r$, so $\tau(G)=\ZZ_r < \Sone $. Then the map
$$\phi_r:\Lambda^gX^K\lra \Fix(G_\tau,\,\Lambda X),$$ 
defines a homeomorphism for any $g\in G$ satisfying $\tau(g)=\nfrac{1}{r}$.
\end{proposition}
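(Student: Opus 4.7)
The plan is to produce an explicit continuous inverse to $\phi_r$, after first reducing to the case where $K$ is trivial. By the preceding lemma, $\Fix(G_\tau,\,\Lambda X)=\Fix((G/K)_\tau,\,\Lambda(X^K))$, and $\phi_r$ clearly factors through $\Lambda^g X^K$ (note $g$ preserves $X^K$ since $K\lhd G$). So by replacing $X$ with $X^K$ and $G$ with $G/K$, I may assume $K=\trivial$ and $\tau:G\to\ZZ_r$ is an isomorphism, with $G=\langle g\rangle$.

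Define $\psi:\Fix(G_\tau,\,\Lambda X)\to\Lambda^gX$ by $\psi(u)(t)=u(t/r)$ for $t\in[0,1]$. The symmetry $g\cdot u(s)=u(s+\nfrac1r)$ gives $\psi(u)(1)=u(\nfrac1r)=g\cdot u(0)=g\cdot\psi(u)(0)$, so $\psi$ indeed lands in $\Lambda^gX$.

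Next I would check $\psi$ and $\phi_r$ are mutually inverse. For $\gamma\in\Lambda^gX$, the definition \eqref{eq:defn of phi_r} restricted to the first interval gives $\psi(\phi_r(\gamma))(t)=\phi_r(\gamma)(t/r)=\gamma(t)$ for $t\in[0,1]$. Conversely, for $u\in\Fix(G_\tau,\,\Lambda X)$ and $t\in[j/r,\,(j+1)/r]$, the piecewise formula yields
\[
\phi_r(\psi(u))(t)=g^j\,\psi(u)(rt-j)=g^j\,u\!\left(t-\tfrac{j}{r}\right)=u(t),
\]
the last equality by iterating the symmetry. Continuity across the boundaries $t=j/r$ is automatic since $\psi(u)(1)=g\cdot\psi(u)(0)$.

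Then I would verify that $\phi_r(\gamma)$ really lies in $\Fix(G_\tau,\,\Lambda X)$: for $t\in[j/r,(j+1)/r]$ a direct computation from \eqref{eq:defn of phi_r} shows $\phi_r(\gamma)(t+\nfrac1r)=g\cdot\phi_r(\gamma)(t)$, and iterating proves invariance under the full generator of $G_\tau$. Finally, both maps are continuous in the compact-open topology: $\phi_r$ is given by piecewise reparametrization and action by elements of $\Gamma$ (continuous since the action of $\Gamma$ on $X$ is continuous), and $\psi$ is simply precomposition with a continuous map $[0,1]\to\bb{T}$.

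The only substantive point requiring care is the reduction to $K$-trivial, where one must keep track of the fact that $g^r$ need not be the identity in $G$ but does act trivially on $X^K$; modulo this, all the verifications are direct computations with the explicit formula for $\phi_r$.
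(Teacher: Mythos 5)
Your proposal is correct and follows essentially the same route as the paper: the paper also exhibits the inverse as the restriction $u\mapsto[t\mapsto u(t/r)]$ and notes both maps are continuous in the compact-open topology, handling $K$ directly by observing that $\gamma(t)\in X^K$ forces $\phi_r(\gamma)(t)\in X^K$ and that $g^r\in K$ makes $\phi_r(\gamma)$ close up. Your preliminary reduction to the $K$-trivial case via the preceding lemma is a harmless reorganization of the same argument, and your explicit verifications of the symmetry and of mutual inversion simply spell out what the paper calls ``straightforward.''
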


Note that if $g,g'$ satisfy $\tau(g)=\tau(g')$ then they differ by an element of $K$ and therefore they act the same way on $X^K$ and so $\Lambda^gX^K=\Lambda^{g'}X^K$.

\begin{proof}
Let $\gamma\in\Lambda^gX^K$ and $u=\phi_r(\gamma)$.  Since $\gamma(t)\in X^K$ for all $t$, so is $u(t)$. Since $g^r\in K$ it follows that $u$ is indeed a loop. That $u$ has symmetry $G_\tau$ is straightforward.  Moreover $\phi_r$ is a homeomorphism as the inverse is simply the restriction $u\mapsto [t\mapsto u(t/r)]$ (with $t\in[0,1]$) and both maps are continuous in the compact-open topology.
\end{proof}

\begin{definition}\label{def:generating path}
Given a loop $u\in\Lambda X$ with symmetry $G_\tau$ as above, we call $\gamma=\phi_r^{-1}(u)$ a \defn{generator} of $u$.  Explicitly,
\begin{equation} \label{eq:generating loop}
\gamma(t) = u(t/r),\quad t\in[0,1]. 
\end{equation}
\end{definition}

\subsection{Equivariant loop space} \label{sec:equivariant loops}
Let $X$ be a manifold and $\Gamma$ a group acting on $X$. Denote by $\paths(X)$ the space of all paths on $X$---that is continuous maps $[0,1]\to X$, and by $\paths(X,x)$ the space of those paths $\gamma$ for which $\gamma(0)=x$.  We follow standard notation and denote by $\Lambda X$ the space of (free) loops in $X$, so the set of those $\gamma\in\paths(X)$ for which $\gamma(1)=\gamma(0)$,  and we let $\Omega(X,x)$ denote the space of loops based at $x$, so $\gamma(0)=\gamma(1)=x$. All spaces of paths are given the compact-open topology. 

We define the \defn{equivariant free loop space} to be
$$\Lambda^\Gamma X:=\left\{(\gamma,g)\in \paths(X)\times \Gamma  \mid \gamma(1)=g\cdot\gamma(0)\right\}.$$
Similarly for $x\in X$, $\Omega^\Gamma (X,x)$ consists of those equivariant loops $(\gamma,g)$ with $\gamma(0)=x$ and $\gamma(1)=g\cdot x$. We will denote by $\beta$ the projection $\paths(X)\times \Gamma \to \Gamma $ given by $\beta(\gamma,g)=g$.  We also denote the restrictions of $\beta$ to $\Lambda^\Gamma X$ and $\Omega^\Gamma(X,x)$ by $\beta$.  In the last section we defined, for each $g\in \Gamma $,
$$\Lambda^gX =\left\{\gamma\in\paths(X) \mid \gamma(1)=g\cdot\gamma(0)\right\} = \beta^{-1}(g) \subset \Lambda^\Gamma X.$$

In the same way that the fundamental group $\pi_1(X,x)$ is defined to be $\Omega(X,x)/{\sim}$, where $\sim$ means homotopy of paths relative to the end-points, we define
$$\pi_1^\Gamma (X,x) = \Omega^\Gamma (X,x)/{\sim},$$
where again $\sim$ is relative to end-points, and with $g$ kept fixed.
This set $\pi_1^\Gamma (X,x)$ has a natural group structure given by
$$(\gamma,g)(\delta,h) = (\gamma*(g\delta),gh)$$
where $g\delta$ is the image of $\delta$ under the action of $g$ on $X$, and $*$ is the usual concatenation product in homotopy. [Note that our notation does not distinguish between homotopy classes and their representatives: one could be more careful and write for example $([\gamma],g)([\delta],h) = ([\gamma*g\delta],gh)$. We hope this will not be a source of confusion.]  
Some details and examples of this group can be found in the second author's thesis \cite[Sec.~2.4]{Steckles}. The group was introduced by Rhodes \cite{Rhodes} where he calls it the fundamental group of the transformation group, and denotes it $\sigma(X,x,\Gamma )$.   
The group is independent of base point $x$ up to isomorphism, provided $X$ is path connected, or more 
generally\footnote{Rhodes \cite{Rhodes} proves this only for Abelian groups, but it is easily seen to be true in general: indeed the map $(\gamma,g)\mapsto(h\gamma,hgh^{-1})$ provides a homeomorphism $\Omega^\Gamma (X,x)\to\Omega^\Gamma (X,hx)$ which descends to an isomorphism $\pi_1^\Gamma (X,x)\to\pi_1^\Gamma (X,hx)$.} provided $\Gamma $ acts transitively on the set of connected components of $X$.  
Some other properties, immediate from the definition,  are as follows.  We write all groups multiplicatively, so the trivial (homotopy) group is denoted $\trivial$. 

\begin{itemize}
\item There is a short exact sequence,
\begin{equation}\label{eq:EFG ses}
\trivial \to \pi_1(X,x) \longrightarrow \pi_1^\Gamma (X,x) \stackrel{\beta}{\longrightarrow} \Gamma  \to \trivial,
\end{equation}
where $\beta$ is the natural projection $(\gamma,g)\mapsto g$ as above, and more generally if $N\lhd \Gamma $ (normal subgroup) then
\begin{equation}\label{eq:EFG-normal subgp}
\trivial \to \pi_1^N(X,x) \longrightarrow \pi_1^\Gamma (X,x) \longrightarrow \Gamma /N \to \trivial.
\end{equation}
\item If $H<\Gamma $ then $\pi_1^H(X,x)=\beta^{-1}(H) < \pi_1^\Gamma (X,x)$.
\item If $x$ is fixed by $\Gamma $ then $\pi_1^\Gamma (X,x) \simeq \pi_1(X,x)\rtimes \Gamma $, where the action of $\Gamma $ on $\pi_1(X,x)$ is the natural one.
In particular if the action of $\Gamma $ on $X$ is trivial then $\pi_1^\Gamma (X,x)\simeq\pi_1(X,x)\times \Gamma $.
\item If $f:X\to Y$, is an equivariant map then there is a natural homomorphism $f_*:\pi_1^\Gamma (X,x)\to\pi_1^\Gamma (Y,f(x))$. More generally, if $\Gamma <H$ and $f:X\to Y$ is $\Gamma $-equivariant, then there is a natural homomorphism 
 $$f_*:\pi_1^\Gamma (X,x)\lra  \pi_1^H(Y,f(x)).$$
\end{itemize}

At the level of homotopy, the homeomorphism $\phi_r$ of Proposition~\ref{prop:fix is Lambda^g} becomes 
\begin{equation}\label{eq:r-th power}
 (\phi_r(\gamma),g^r) = (\gamma,g)^r,
\end{equation}
using the group structure in $\pi_1^\Gamma $.

\begin{example} \label{ex:pi_1 S^1}
Let $\Sone $ act on a manifold $X$. Then there is a homomorphism $\alpha:\pi_1(\Sone ,e)\to\pi_1(X,x)$ generated by the homotopy class of the orbit $\Sone \cdot x$. It is well-known that the image of $\alpha$ lies in the centre of $\pi_1(X,x)$.  Then we claim
$$\pi_1^{\Sone }(X,x) \simeq \pi_1(X,x)\times_{\alpha}\RR,$$
where $\times_{\alpha}$ means we identify $(\gamma,r+n)$ and $(\alpha(n)\gamma,r)$ for $n\in\mathbb{Z}\simeq\pi_1(\Sone ,e)$; in other words, we factor $\pi_1(X,x)\times\RR$ by the normal subgroup generated by $(\alpha(1),-1)$, where the 1 is the generator of $\pi_1(\Sone ,e)\simeq \mathbb{Z}<\RR$.  The isomorphism is given as follows.  Let $\theta\in\RR$ and $p\in X$, and denote by $\theta_p$  the path
\begin{equation}\label{eq:theta_x}
\theta_p(t) = (t\theta)\cdot p,
\end{equation}
which is a path from $p$ to $\theta\cdot p$ following the $\Sone$-orbit. Then for $(\gamma,\theta)\in\pi_1(X,x)\times\RR$, the corresponding element of $\pi_1^{\Sone}(X,x)$ is $(\gamma*\theta_x,\; \theta\bmod 1)$. This descends to a well-defined homomorphism $\pi_1(X,x)\times_\alpha\RR \to \pi_1^{\Sone}(X,x)$, which has inverse 
$$(\gamma,\theta) \mapsto  [\gamma*(-\theta)_{\theta\cdot x},\;\theta].$$
Note that $(-\theta)_{\theta\cdot x}$ is the reverse path of $\theta_x$. (Everything is up to homotopy of course.)

The projection $\beta$ makes $\pi_1^{\Sone }(X,x) $ into a bundle over $\Sone $ with fibre $\pi_1(X,x)$ and monodromy $\alpha$.
If we restrict the action to the cyclic group $\ZZ_k<\Sone $, then the corresponding subgroup of $\pi_1(X,x)\times_{\alpha}\RR$ is
$$\pi_1^{\ZZ_k}(X,x) \simeq \pi_1(X,x)\times_{\alpha}\ZZ(1/k),$$ 
where $\ZZ(1/k)$ is the subgroup of $\RR$ consisting of integer multiples of $1/k$.
\end{example}

{A fundamental property of $\pi_1^\Gamma(X,x)$ is the following.  The first part in fact follows from a result of Rhodes \cite[Theorem 4]{Rhodes}, although he proves a more general statement about orbit spaces of not necessarily free actions. The argument in the free case is more straightforward, so we give it here (it appears also in the second author's thesis \cite[Proposition 2.4.3]{Steckles}).  

\begin{proposition}\label{prop:EFG2}
If\/ $N$ is finite and acts freely on a manifold $X$ then $\pi_1^N(X,x)\simeq \pi_1(X/N,\bar x)$; more generally, if $N$ is a finite normal subgroup of\/ $\Gamma $ and acts freely on $X$ then
$$\pi_1^\Gamma (X,x) \simeq \pi_1^{\Gamma /N}(X/N,\bar x).$$
\end{proposition}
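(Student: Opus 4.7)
My plan has three steps: prove the special case $\Gamma=N$ by covering-space theory, extend to the general case by defining an explicit map, then conclude via the Five Lemma applied to the short exact sequence \eqref{eq:EFG-normal subgp}.

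First I would establish the case $\Gamma=N$, which carries the essential geometric content. Since $N$ is finite and acts freely on the manifold $X$, the projection $\pi:X\to X/N$ is a regular covering map. I define
$$\Phi_0:\pi_1^N(X,x)\lra \pi_1(X/N,\bar x),\qquad (\gamma,n)\mapsto \pi\circ\gamma.$$
Since $\gamma(1)=n\cdot x$, the composition $\pi\circ\gamma$ is a loop at $\bar x$, and homotopies rel endpoints in $X$ descend to homotopies in $X/N$, so $\Phi_0$ is well-defined on homotopy classes. The $N$-action is trivial on $X/N$, hence $(\gamma,n)(\delta,m)=(\gamma*(n\delta),nm)$ is sent to $(\pi\gamma)*(\pi\delta)$; so $\Phi_0$ is a homomorphism. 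For the inverse, given a loop $\bar\gamma$ based at $\bar x$, the unique path-lifting property of the covering gives a unique lift $\gamma$ starting at $x$, and since the action is free there is a unique $n\in N$ with $\gamma(1)=n\cdot x$. Set $\Psi_0(\bar\gamma)=(\gamma,n)$. The covering homotopy property makes $\Psi_0$ homotopy-invariant, and verifying $\Phi_0\Psi_0=\mathrm{id}$ and $\Psi_0\Phi_0=\mathrm{id}$ is straightforward.

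Second, for the general case, I would define
$$\Phi:\pi_1^\Gamma(X,x)\lra \pi_1^{\Gamma/N}(X/N,\bar x),\qquad (\gamma,g)\mapsto (\pi\circ\gamma,\,\bar g).$$
Normality of $N$ is precisely what is needed for $\Gamma/N$ to act on $X/N$; and if $\gamma(1)=g\cdot x$ then $(\pi\gamma)(1)=\bar g\cdot\bar x$, so the image is indeed an element of $\Omega^{\Gamma/N}(X/N,\bar x)$. The homomorphism property follows from $\pi\circ(g\delta)=\bar g\cdot(\pi\delta)$ by $\Gamma$-equivariance of $\pi$.

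Third, I would slot $\Phi$ into the commutative diagram built from two copies of \eqref{eq:EFG-normal subgp}: the top row is that of $\pi_1^\Gamma(X,x)$ with normal subgroup $\pi_1^N(X,x)$; the bottom row is that of $\pi_1^{\Gamma/N}(X/N,\bar x)$ with normal subgroup $\pi_1(X/N,\bar x)$. The left vertical map is $\Phi_0$, the middle is $\Phi$, and the right is the identity on $\Gamma/N$. The squares commute by construction, and since $\Phi_0$ is an isomorphism by step one, the Five Lemma yields that $\Phi$ is an isomorphism. The main subtlety throughout is the well-definedness of $\Psi_0$ on homotopy classes: this requires both the homotopy-lifting property of $\pi$ and freeness of the $N$-action (to pin down the endpoint $n\cdot x$ uniquely), while finiteness of $N$ is what ensures $\pi$ is a covering in the first place.
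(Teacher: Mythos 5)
Your proof is correct and takes essentially the same route as the paper: the case $\Gamma=N$ is handled by covering-space theory (path- and homotopy-lifting for the covering $X\to X/N$, with freeness pinning down the deck element), and the general case by a commutative-diagram chase. The only cosmetic difference is that you apply the five lemma to the two short exact sequences of the form \eqref{eq:EFG-normal subgp}, whereas the paper applies the snake lemma to the diagram built from \eqref{eq:EFG ses}; the two arguments amount to the same bookkeeping.
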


\begin{proof}
The map $X\to X/N$ is a covering so enjoys the homotopy lifting property (see for example, Hatcher \cite[Proposition 1.30]{Hatcher}).  Consider the natural map given by projecting the path: 
\begin{equation} \label{eq:equivariant fundamental loops}
\Omega^N (X,x) \lra \Omega(X/N ,\bar{x})
\end{equation}
It follows from the homotopy lifting property that this map is a homeomorphism, and that it induces the required isomorphism
$$\pi_1^N(X,x) \lra \pi_1(X_N ,\overline{x}).$$  

The more general statement follows from the snake lemma applied to the diagram,
\begin{center}\psset{arrowscale=1.3,yunit=0.8}
\begin{pspicture}(-4,-1.5)(4,1.5)
\rput(-3,1){$\pi_1(X,x)$} \psline{->}(-2,1)(-1,1) \rput(0.3,1){$\pi_1^\Gamma (X,x)$} \psline{->}(1.5,1)(2.5,1) \rput(3,1){$\Gamma $}
\psline{->}(-3,0.6)(-3,-0.6)\psline{->}(0.3,0.6)(0.3,-0.6)\psline{->}(3,0.6)(3,-0.6)\rput(-3,-1){$\pi_1(X/N,\bar x)$} \psline{->}(-2,-1)(-1,-1) \rput(0.3,-1){$\pi_1^{\Gamma /N}(X/N,\bar x)$} \psline{->}(1.5,-1)(2.5,-1) \rput(3,-1){$\Gamma /N$}
\end{pspicture}
\end{center}
The first column is injective with cokernel $N$ since $\pi_1^N(X,x)\simeq \pi_1(X/N,\bar x)$, and the final column is surjective with kernel $N$; since the resulting homomorphism $N\to N$ is an isomorphism, it follows that the middle column is an isomorphism.
\end{proof}

A simple consequence of this is that if $\Gamma $ is a finite group, $\pi_1^\Gamma (X,x)$ is the \emph{equivariant fundamental group} of $X$:

\begin{corollary} \label{coroll:pi1G  is EFG}
If\/ $\Gamma $ is a finite group acting on a manifold $X$, then $\pi_1^\Gamma (X,x)$ is
isomorphic to the fundamental group of\/ $X_\Gamma :=X\times_\Gamma  \EGamma $.
\end{corollary}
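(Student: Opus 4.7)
The plan is to reduce the statement to Proposition \ref{prop:EFG2} via the Borel construction. The point is that $\Gamma$ does not act freely on $X$, but it does act freely on $X\times\EGamma$, and this auxiliary space is $\Gamma$-equivariantly homotopy equivalent to $X$.

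First I would introduce $Y:=X\times\EGamma$ with the diagonal $\Gamma$-action. Since $\Gamma$ acts freely on $\EGamma$, it acts freely on $Y$, and by definition $Y/\Gamma=X_\Gamma$. Taking $N=\Gamma$ (as a normal subgroup of itself) in Proposition \ref{prop:EFG2} applied to $Y$ gives
$$\pi_1^\Gamma(Y,(x,\ee))\;\simeq\; \pi_1(Y/\Gamma,\overline{(x,\ee)})\;=\;\pi_1(X_\Gamma,\bar x),$$
where $\ee\in\EGamma$ is a chosen base point. So it remains to exhibit a canonical isomorphism $\pi_1^\Gamma(X,x)\simeq \pi_1^\Gamma(Y,(x,\ee))$.

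The second step is to show that the $\Gamma$-equivariant projection $p\colon Y\to X$ induces such an isomorphism $p_*$. I would use the short exact sequence \eqref{eq:EFG ses} at both ends and the five lemma, in the commutative diagram
\begin{center}
\psset{arrowscale=1.3,yunit=0.8}
\begin{pspicture}(-5,-1.5)(5,1.5)
\rput(-4,1){$\trivial$}\psline{->}(-3.7,1)(-3,1)
\rput(-2,1){$\pi_1(Y,(x,\ee))$}\psline{->}(-0.7,1)(0.2,1)
\rput(1.7,1){$\pi_1^\Gamma(Y,(x,\ee))$}\psline{->}(3.2,1)(4,1)
\rput(4.3,1){$\Gamma$}\psline{->}(4.6,1)(5.3,1)\rput(5.6,1){$\trivial$}
\psline{->}(-2,0.6)(-2,-0.6)\psline{->}(1.7,0.6)(1.7,-0.6)\psline{->}(4.3,0.6)(4.3,-0.6)
\rput(-4,-1){$\trivial$}\psline{->}(-3.7,-1)(-3,-1)
\rput(-2,-1){$\pi_1(X,x)$}\psline{->}(-1,-1)(0.2,-1)
\rput(1.7,-1){$\pi_1^\Gamma(X,x)$}\psline{->}(3,-1)(4,-1)
\rput(4.3,-1){$\Gamma$}\psline{->}(4.6,-1)(5.3,-1)\rput(5.6,-1){$\trivial$}
\end{pspicture}
\end{center}
The right-hand vertical map is the identity, and the left-hand one is an isomorphism because $\EGamma$ is contractible, so $\pi_1(Y)=\pi_1(X)\times\pi_1(\EGamma)=\pi_1(X)$. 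The five lemma then gives that $p_*$ in the middle column is an isomorphism.

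Finally, composing
$$\pi_1^\Gamma(X,x)\;\stackrel{p_*^{-1}}{\lra}\;\pi_1^\Gamma(Y,(x,\ee))\;\stackrel{\eqref{prop:EFG2}}{\lra}\;\pi_1(X_\Gamma,\bar x)$$
yields the claimed isomorphism. The only subtle point is the verification that the diagram above commutes and that its rows are exact: exactness is \eqref{eq:EFG ses}, and commutativity of the central square is immediate from the definition of $p_*$ on a pair $(\gamma,g)\mapsto(p\circ\gamma,g)$, since the $\beta$-component is preserved on the nose. I expect the main obstacle to be purely bookkeeping: checking that the homeomorphism of path spaces underlying Proposition \ref{prop:EFG2} really does descend to the quotient $X_\Gamma$ rather than merely to $Y/\Gamma$ considered abstractly, and that base points are chosen consistently throughout.
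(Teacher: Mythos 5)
Your proposal is correct and follows essentially the same route as the paper: both reduce the statement to Proposition~\ref{prop:EFG2} by passing to the free diagonal action on $X\times\EGamma$ and identifying the quotient with $X_\Gamma$. The only difference is in how the isomorphism $\pi_1^\Gamma(X,x)\simeq\pi_1^\Gamma(X\times\EGamma,(x,\ee))$ is established: the paper constructs an explicit map $(\gamma,g)\mapsto((\gamma,\omega_g),g)$ using chosen paths $\omega_g$ in $\EGamma$ and checks bijectivity directly, whereas you invoke functoriality of $\pi_1^\Gamma$ for the equivariant projection together with the five lemma applied to the exact sequence \eqref{eq:EFG ses} --- a clean and equally valid verification of the same step.
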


Here $\EGamma$ is the total space of the universal classifying bundle for $\Gamma$, it is a contractible space on which $\Gamma$ acts freely.  The space $X\times_\Gamma \EGamma = (\EGamma\times X)/\Gamma$ is the (Borel construction for) the homotopy orbit space for the $\Gamma$-action on $X$, see for example \cite{GS-Supersymmetry}. 

\begin{proof}
Let $*\in E$ be a base point, and for each $g\in \Gamma $ choose a path $\omega_g$  from $*$ to $g\cdot*$ (since $\EGamma$  is contractible, $\omega_g$ is unique up to homotopy).  Consider the injective map $i:\Omega^\Gamma (X,x) \to \Omega^\Gamma (X\times \EGamma , (x,*))$ defined by $i(\gamma,g) = ((\gamma,\omega_g),g)$. 

We claim that the induced homomorphism
$$i_*:\pi_1^\Gamma (X,x) \lra \pi_1^\Gamma (X\times \EGamma, (x,*))$$
is an isomorphism. That it is injective is clear, for if $(\gamma,g)$ is in the kernel then $g=e$ and $\gamma$ is trivial. It is surjective because $\EGamma$ is contractible, so any $\Gamma$-loop $((\gamma,\delta_g),g)$ is homotopic to $((\gamma,\omega_g),g)$ which is in the image of the original map $i$.  Therefore
$$\pi_1^\Gamma (X,x) \simeq \pi_1^\Gamma (X\times \EGamma, (x,*)) \simeq \pi_1(X_\Gamma,\overline{(x,*)}),$$
where the latter isomorphism follows from the proposition above.
\end{proof}
}

\subsection{Connected components}
We are interested in the topology of the space $\Lambda^\Gamma X$ of equivariant loops, and in particular of each $\Lambda^gX=\beta^{-1}(g)$.   To this end we adapt the usual argument showing that the connected components of the free loop space $\Lambda X$ correspond to conjugacy classes in the fundamental group $\pi_1(X,x)$.

First we need the topology of the \emph{based} equivariant loop space $\Omega^\Gamma (X,x)$, or rather $\Omega^g(X,x)$ for each $g$.  This is derived by giving $\Gamma $ the \emph{discrete} topology and using the fibration $\paths(X,x)\times \Gamma \to X,\; (\gamma,g)\mapsto g^{-1}\gamma(1)$ with fibre $\Omega^\Gamma (X,x)$. Since $\paths(X,x)$ is contractible, one finds from the long exact sequence that for $k\geq1$, 
$$\pi_k(\Omega^\Gamma (X,x),(\gamma,g))\simeq \pi_{k+1}(X,x).$$  
And by definition $\pi_0(\Omega^\Gamma (X,x),(\gamma,g)) = \pi_1^\Gamma (X,x)$ (as $\Gamma $ has the discrete topology).

Consider now the fibration 
\begin{equation}\label{eq:G-loop fibration}
{\begin{pspicture}(-3,0)(2,1)\psset{yunit=0.7,arrowscale=1.3}
\rput(-2.3,1){$\Omega^\Gamma (X,x)$} \rput(-1,0.9){${\lhook\joinrel\longrightarrow}$} 
\rput(0,1){$\Lambda^\Gamma X$} \rput(2,1){$(\gamma,\,g)$}
\psline{->}(0,0.5)(0,-0.5) \psline{|->}(2,0.5)(2,-0.5)
\rput(0,-1){$X$} \rput(2,-1){$\gamma(0)=g^{-1}\gamma(1)$}
\end{pspicture}}
\bigskip
\bigskip
\end{equation}
The fibre over the point $x$ is $\Omega^\Gamma (X,x)$.  The long exact sequence associated to this fibration ends with
\begin{equation} \label{eq:fibration l.e.s}
 \begin{array}{c}
\hbox to 0pt{\hss$\cdots$}\lra \pi_2(\Omega^\Gamma (X,x),\,(\gamma,g)) \lra \pi_2(\Lambda^\Gamma X,\,(\gamma,g)) \lra \pi_2(X,x) \lra \\[6pt]
\qquad\lra \pi_1(\Omega^\Gamma (X,x),\,(\gamma,g)) \lra \pi_1(\Lambda^\Gamma X,\,(\gamma,g)) \lra \pi_1(X,x) \lra \\[6pt]
\qquad\qquad\lra \pi_0(\Omega^\Gamma (X,x),\,(\gamma,g)) \lra \pi_0(\Lambda^\Gamma X,\,(\gamma,g)) \lra \pi_0(X,x) = \trivial,
\end{array}
\end{equation}
where we have assumed $X$ is path connected.

Since the topology on $\Gamma $ is discrete, we have $\pi_0(\Omega^\Gamma (X,x),\,(\gamma,g))= \pi_1^\Gamma (X,x)$, so that the last few terms of the sequence above become,
\begin{equation}\label{eq:4-term les}
\cdots \lra \pi_1(\Lambda^\Gamma X,\,(\gamma,g)) \lra \pi_1(X,x) \stackrel{\eps}{\lra} 
 \pi_1^\Gamma (X,x) \lra \pi_0(\Lambda^\Gamma X,\,(\gamma,g)) \lra  \trivial. 
\end{equation}

\begin{lemma} \label{lemma:alpha-map}
The map $\eps$ is given by conjugation in $\pi_1^\Gamma (X,x)$ by elements of $\pi_1(X,x)$:
\begin{equation} \label{eq:alpha}
\eps(\eta) = (\eta,e)^{-1}(\gamma,g)(\eta,e) = (\bar\eta*\gamma*(g\eta),\,g). 
\end{equation} 
where $\bar\eta$ is the reverse path of $\eta$.
\end{lemma}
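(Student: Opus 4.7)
The plan is to compute the connecting homomorphism $\eps$ by the standard recipe from the homotopy long exact sequence of the fibration \eqref{eq:G-loop fibration}, and then to recognise the result as the conjugation formula asserted in the lemma. The computation breaks into two independent steps: constructing an explicit lift of a loop in $X$ to a path in $\Lambda^\Gamma X$, and then a short algebraic identification in $\pi_1^\Gamma(X,x)$.

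First I would set up the lift. Given $[\eta]\in\pi_1(X,x)$, represented by a loop $\eta:[0,1]\to X$ with $\eta(0)=\eta(1)=x$, I want to build a path $\Psi:[0,1]\to\Lambda^\Gamma X$ with $\Psi(0)=(\gamma,g)$ and $\beta\circ\Psi=\eta$. Since $\Gamma$ carries the discrete topology, the $\Gamma$-component of $\Psi(s)$ must be constant, equal to $g$, so $\Psi(s)=(\delta_s,g)$ with $\delta_s(0)=\eta(s)$ and $\delta_s(1)=g\cdot\eta(s)$. A natural choice is the homotopy $H:[0,1]\times[0,1]\to X$ defined by
\begin{equation*}
H(s,t)=\begin{cases}\eta(s(1-3t)) & t\in[0,\tfrac13],\\ \gamma(3t-1) & t\in[\tfrac13,\tfrac23],\\ g\eta(s(3t-2)) & t\in[\tfrac23,1],\end{cases}
\end{equation*}
and then $\delta_s:=H(s,\cdot\,)$. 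One checks $\delta_0\simeq\gamma$ rel.\ endpoints (the outer pieces are constant at $x$ and $gx$), while $\delta_1$ is a reparametrisation of the concatenation $\bar\eta*\gamma*(g\eta)$. Since $H(s,1)=g\cdot H(s,0)$, the path $\Psi$ lies in $\Lambda^\Gamma X$ and lifts $\eta$.

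Next, by definition of the connecting map for a fibration with discrete fibre components, $\eps([\eta])$ is the class of $\Psi(1)$ in $\pi_0(\Omega^\Gamma(X,x))=\pi_1^\Gamma(X,x)$, so
\begin{equation*}
\eps([\eta]) = [(\bar\eta*\gamma*(g\eta),\,g)].
\end{equation*}
Finally I would identify this with the conjugation formula: using the multiplication rule $(\alpha,h)(\beta,k)=(\alpha*(h\beta),hk)$ in $\pi_1^\Gamma(X,x)$, one computes
\begin{equation*}
(\eta,e)^{-1}(\gamma,g)(\eta,e) = (\bar\eta,e)(\gamma,g)(\eta,e) = (\bar\eta*\gamma,\,g)(\eta,e) = (\bar\eta*\gamma*(g\eta),\,g),
\end{equation*}
which is exactly $\eps([\eta])$ as claimed.

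The only even mildly delicate point is keeping track of reparametrisations when comparing $\delta_0$ with $\gamma$ and $\delta_1$ with $\bar\eta*\gamma*(g\eta)$, and of course checking that the homotopy $H$ descends to a well-defined map on homotopy classes (independence of the representative $\eta$), both of which are routine. The main content is genuinely the explicit construction of the lift $\Psi$; everything else is a direct unwinding of the group law on $\pi_1^\Gamma(X,x)$.
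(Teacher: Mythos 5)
Your proposal is correct and follows essentially the same route as the paper: your homotopy $H(s,\cdot)$ is precisely the paper's deformation $\bar\eta_{y}*\gamma*(g\cdot\eta_{y})$ with $\eta_y(t)=\eta(t_0t)$ and $s=t_0$, and the concluding algebraic identification is just the group law in $\pi_1^\Gamma(X,x)$ as in the statement of the lemma. The only quibble is notational: you write $\beta\circ\Psi=\eta$ for the fibration projection $\Lambda^\Gamma X\to X$, $(\delta,g)\mapsto\delta(0)$, whereas the paper reserves $\beta$ for the projection to $\Gamma$.
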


\begin{proof}
The map $\eps:\pi_1(X,x) \to \pi_0(\Omega^\Gamma (X,x),\,(\gamma,g))=\pi_1^\Gamma (X,x)$ is the effect of lifting a loop $\eta\in\Omega(X,x)$ in the fibration (\ref{eq:G-loop fibration}). Let $t_0\in[0,1]$ and put $y=\eta(t_0)$.  The $g$-loop $\bar\eta_y*\gamma*(g\cdot \eta_y)$, where $\eta_y(t) = \eta(t_0t)$ for $t\in[0,1]$ (see figure below), provides a deformation of $\gamma$, continuous in $t_0$,  with base-point $y$. 
Letting $t_0$ increase until $t_0=1$ gives the required expression.
\begin{center}
\begin{pspicture}(0,-0.5)(4,2)
\psset{arrowscale=1.8}
\psdot(0,0) 
\rput(0,-0.5){$x$} \rput(6,-0.5){$g\cdot x$}
\psdot(-0.5625,1.125) \rput(-0.8,1){$y$}
\psbezier(0,0)(2,1)(4,-1)(6,0) \psline{->}(2.6,0.1)(3,0) \rput(3,-0.3){$\gamma$}
\psbezier[linestyle=dashed](0,0)(-2,2)(2,2)(0,0) \psline{->}(-0.1,1.48)(0,1.5)
\parametricplot{0}{0.3}{6 t mul 1 t sub mul 2 t mul 1 sub mul 6 t mul 1 t sub mul}
 \rput(0,1.75){$\eta$}
\rput(6,0){
\psdot(0,0) 
\psdot(-0.5625,1.125) \rput(-1,1){$g\cdot y$}
\psbezier[linestyle=dashed](0,0)(-2,2)(2,2)(0,0) \psline{->}(-0.1,1.48)(0,1.5) \parametricplot{0}{0.3}{6 t mul 1 t sub mul 2 t mul 1 sub mul 6 t mul 1 t sub mul}
\rput(0,1.75){$g\cdot\eta$}
}
\end{pspicture}
\end{center}
\end{proof}

It follows from the exact sequence (\ref{eq:4-term les}) that two $g$-loops $\gamma,\delta\in\Omega^g(X,x)$ are in the same connected component of $\Lambda^gX$ if and only if there is a loop $\eta\in\Omega(X,x)$ and a homotopy
$$\delta \;\sim\; \bar\eta*\gamma*(g\eta).$$

If we assume that $X$ is \emph{aspherical}, which will be true of the applications to choreographies, then we can easily deduce more (a space is aspherical if its universal cover is contractible; these are also known as Eilenberg-MacLane $K(\pi,1)$ spaces).  In this case $\pi_k(X,x)=\trivial$ for $k\geq 2$, and it follows that $\pi_k(\Omega^\Gamma (X,x))=\trivial$ for $k\geq1$ and the long exact sequence above then implies $\pi_k(\Lambda^\Gamma X)=\trivial$ for $k\geq2$.  In particular, the `$\cdots$' at the start of (\ref{eq:4-term les}) can be replaced by $\trivial$, and so $\pi_1(\Lambda^\Gamma X,\,(\gamma,g)) $ is isomorphic to the kernel of $\eps$.

The preceding discussion can be summarized in the following statement, which is a restatement in terms of the equivariant fundamental group of two theorems in \cite{MMRS05}.  The theorem will be applied via Proposition \ref{prop:fix is Lambda^g} to spaces of symmetric loops. 

\begin{theorem} \label{thm:connected components}
Suppose $X$ is a manifold with an action of a group $\Gamma $.  For each $g\in\Gamma$, the relative loop space $\Lambda^gX$ enjoys the following properties.
\begin{enumerate}
\item The connected components of\/ $\Lambda^gX$ are in 1--1 correspondence with the set of orbits of the action of\/ $\pi_1(X,x)$ on $\beta^{-1}(g)\subset \pi_1^\Gamma (X,x)$ by conjugation.
\item If in addition $X$ is aspherical then the connected component of\/ $\Lambda^gX$ containing\, $\gamma$ is aspherical with fundamental group isomorphic to the `partial centralizer'
$$Z_{\pi_1^\Gamma (X,x)}((\gamma,g)) \;\cap \; \pi_1(X,x),$$
where $Z_{\pi_1^\Gamma (X,x)}((\gamma,g)) $ is the centralizer of the element $(\gamma,g)$ in $\pi_1^\Gamma (X,x)$.
\end{enumerate}
\end{theorem}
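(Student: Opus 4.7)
The proof has essentially been set up in the preceding discussion; the plan is to assemble the pieces.  First, I would note that since the projection $\beta:\Lambda^\Gamma X\to\Gamma$ is continuous with $\Gamma$ discrete, $\Lambda^\Gamma X=\bigsqcup_{g\in\Gamma}\Lambda^g X$ as a topological space.  In particular each connected component of a fibre $\Lambda^g X$ is a connected component of $\Lambda^\Gamma X$, and $\pi_0(\Lambda^\Gamma X)=\bigsqcup_g \pi_0(\Lambda^g X)$.

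For part~(1), I would apply the tail of the long exact sequence \eqref{eq:4-term les},
\[
\pi_1(X,x)\xrightarrow{\eps}\pi_1^\Gamma(X,x)\to\pi_0(\Lambda^\Gamma X,(\gamma,g))\to\trivial,
\]
which comes from the fibration \eqref{eq:G-loop fibration} together with the identification $\pi_0(\Omega^\Gamma(X,x),(\gamma,g))=\pi_1^\Gamma(X,x)$ (with distinguished basepoint the class $(\gamma,g)$).  By Lemma~\ref{lemma:alpha-map} we have $\eps(\eta)=(\eta,e)^{-1}(\gamma,g)(\eta,e)$, so the image of $\eps$ is exactly the $\pi_1(X,x)$-conjugation orbit of $(\gamma,g)$.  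Exactness then identifies the component of $(\gamma,g)$ in $\Lambda^\Gamma X$ with this single orbit, and letting the basepoint vary, $\pi_0(\Lambda^\Gamma X)$ is in bijection with the set of $\pi_1(X,x)$-conjugation orbits in $\pi_1^\Gamma(X,x)$.  Since $\pi_1(X,x)\subset\beta^{-1}(e)$ and $\beta$ is a homomorphism, such conjugation preserves the $\beta$-coordinate, so orbits lie in the fibres $\beta^{-1}(g)$; combining with the disjoint decomposition of $\Lambda^\Gamma X$ yields the stated bijection.

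For part~(2), asphericity of $X$ gives $\pi_k(X,x)=\trivial$ for $k\geq 2$, hence $\pi_k(\Omega^\Gamma(X,x))=\pi_{k+1}(X,x)=\trivial$ for $k\geq 1$.  Feeding these vanishings into \eqref{eq:fibration l.e.s} gives $\pi_k(\Lambda^\Gamma X)=\trivial$ for $k\geq 2$, so each component of $\Lambda^g X$ is aspherical.  The low-dimensional piece of \eqref{eq:4-term les} then reads
\[
\trivial\to\pi_1(\Lambda^\Gamma X,(\gamma,g))\to\pi_1(X,x)\xrightarrow{\eps}\pi_1^\Gamma(X,x),
\]
so $\pi_1(\Lambda^\Gamma X,(\gamma,g))$ is isomorphic, via the leftmost map, to $\eps^{-1}((\gamma,g))$.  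By Lemma~\ref{lemma:alpha-map} this preimage is $\{\eta\in\pi_1(X,x)\mid (\eta,e)(\gamma,g)=(\gamma,g)(\eta,e)\text{ in }\pi_1^\Gamma(X,x)\}$, i.e.\ $\pi_1(X,x)\cap Z_{\pi_1^\Gamma(X,x)}((\gamma,g))$.

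The only genuine subtlety, and therefore the main ``obstacle,'' is careful bookkeeping at the pointed-set end of the long exact sequence: at the $\pi_0$ and $\pi_1$ level one has maps of pointed sets, and ``kernel'' means preimage of the distinguished basepoint rather than preimage of a group identity.  Crucially, the basepoint of $\pi_1^\Gamma(X,x)$ viewed as $\pi_0(\Omega^\Gamma(X,x),(\gamma,g))$ is not the group identity but the class $(\gamma,g)$ itself, which is precisely what forces the centralizer condition to appear rather than a triviality condition, and which also forces the conjugation-orbit description in part~(1).
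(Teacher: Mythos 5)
Your proof is correct and follows essentially the same route as the paper's: the fibration \eqref{eq:G-loop fibration}, the tail of the long exact sequence \eqref{eq:4-term les}, and Lemma~\ref{lemma:alpha-map} identifying $\eps$ with conjugation by $\pi_1(X,x)$. Your explicit remark that the ``kernel'' at the pointed-set end means the preimage of the class of $(\gamma,g)$ rather than of the identity is exactly the point the paper's phrase ``$\ker\eps$'' silently relies on, so this is a faithful (slightly more careful) rendering of the same argument.
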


\begin{proof}
(1) The exact sequence (\ref{eq:4-term les}) together with Lemma \ref{lemma:alpha-map} shows that two $g$-loops $\gamma,\delta\in\Omega^g(X,x)$ belong to the same connected component of $\Lambda^g X$ if and only if there exists $\eta\in\pi_1(X,x)$ such that 
$$(\delta,g) = (\eta,e)^{-1}(\gamma,g)(\eta,e).$$
That is, if and only if $(\gamma,g)$ and $(\delta,g)$ lie in the same orbit of the action of $\pi_1(X,x)$ on the coset $\beta^{-1}(g)\subset \pi_1^\Gamma (X,x)$. Different connected components therefore correspond to different orbits of this action.

(2) Since $X$ is aspherical, we have $\pi_k(\Omega^\Gamma (X,x),\,(g,\gamma))=0$ for $k\geq1$, so the long exact sequence (\ref{eq:fibration l.e.s}) shows that $\pi_k(\Lambda^\Gamma X,\,(\gamma,g))=0$ for $k\geq2$, and $\pi_1(\Lambda^\Gamma X,\,(\gamma,g))$ is isomorphic to $\ker\eps$. By Lemma \ref{lemma:alpha-map} this kernel is the required `partial centralizer'.
\end{proof}

Before turning to the case in hand of $n$-body choreographies, we illustrate the theorem with two examples.

\begin{example} (Taken from \cite{MMRS05}.)
Consider $\Gamma =\ZZ_2=\left<\kappa\mid\kappa^2=e\right>$ acting on $X=\bb{T}$ by $\kappa\theta = -\theta$. Then using the fixed point $\theta=0$ as base point, one sees that $\pi_1^\Gamma (\bb{T} ,0)=\ZZ\rtimes\ZZ_2$, with $(a,\kappa)(b,\kappa) = (a-b,e)$. Thus $(a,e)(b,\kappa)(a,e)^{-1} = (b+2a,\kappa)$.  It follows that, with $\tau(\kappa)=\nfrac12$ we have $\Fix(\Gamma _\tau,\Lambda X)\simeq\Lambda^\kappa X$ and this has two connected components, determined by the parity of $b\in\ZZ$ in $(b,\kappa)$; both components are contractible. 
\end{example}

\begin{example}
For a more interesting example, let $X$ be the cylinder $\bb{T}\times\RR$ with a single puncture at $(\nfrac12,0)$. The fundamental group is the free group on two generators $F_2$. Choosing $x=(0,0)$ as a base point, the generators are the two loops $a$ and $b$, one above the puncture and one below, both chosen to go round the circle $\bb{T}$  once in the positive direction.  Let $\Gamma =\ZZ_2$ act by reflection in the equator, so $\kappa\cdot(\theta,r)=(\theta,-r)$.  Since $x$ is fixed by this action, we have $\pi_1^\Gamma (X,x)\simeq F_2 \rtimes\ZZ_2$, with $\ZZ_2$ acting on $F_2$ by $\kappa a=b$ and $\kappa b=a$.
If we let $\tau:\Gamma \to \Sone $ be the only non-trivial homomorphism (so $\tau(\kappa)=\nfrac12$), we have 
$$\Fix(\Gamma_\tau,\LX) \simeq \Lambda^\kappa X,$$
and the latter space has connected components in 1--1 correspondence with the orbits of the action of $F_2$ on $\beta^{-1}(\kappa)<\pi_1^\Gamma (X,x)$. This action is
$$(w,e)(z,\kappa)(w^{-1},e) = (wz\bar w^{-1}, \kappa)$$
where $w$ is any element of $F_2$, so any word in $a,b$, and $\bar w$ is the same word but with $a$ replaced everywhere by $b$ and vice versa. Thus two $\kappa$-loops $z,z'$ based at $x$ are in the same connected component of $\Fix(\Gamma _\tau,\LX)$ if and only if there is a $w\in F_2$ such that $z'=wz\bar w^{-1}$. For example, all $\kappa$-loops of the form $w\bar w^{-1}$ are in the same connected component as the trivial loop at $x$. Moreover, that connected component is contractible by Theorem~\ref{thm:connected components}(2) since it is aspherical with fundamental group $\{w\in F_2\mid w\bar w^{-1}=e\}$, which is the trivial group. On the other hand, the connected component containing the $\kappa$-loop $a$ is not contractible, as $ab(a)\bigl(\overline{ab}\bigr)^{-1} = ab(a)a^{-1}b^{-1}=a$, so all powers of $ab$ lie in the partial centralizer $Z_{F_2\rtimes Z_2}((a,\kappa)) \cap F_2$. 
\end{example}

\begin{remark} \label{rmk:Reidemeister} 
The theorem above was written in terms of `twisted conjugacy' in \cite{MMRS05} (also known as Reidemeister conjugacy).  Let $\pi$ be a group and $\phi$ an endomorphism of $\pi$. Two elements $\gamma,\gamma'\in\pi$ are said to be $\phi$-twisted conjugate if there is a $\delta\in\pi$ such that $\gamma'=\delta\gamma\phi(\delta^{-1})$. In the first example above, $\phi(a)=-a$ (additively) and in the second $\phi(w)=\bar w$.

The description in \cite{MMRS05} is related to the present approach as follows. For $g\in \Gamma $ let $\omega=\omega_g$ be a fixed path from $x$ to $g\cdot x$. Then we can identify the coset $\beta^{-1}(g)\subset\pi_1^\Gamma (X,x)$ with $\pi_1(X,x)$ by making $\zeta\in\pi_1(X,x)$ correspond to $(\zeta,e) (\omega_g,g) = (\zeta*\omega_g,g)$ in $\beta^{-1}(g)$. The expression for $\eps$ in (\ref{eq:alpha}) above using this identification is, for $\eta\in\pi_1(X,x)$, 
$$\eps(\eta) = \bar\eta*(\zeta*\omega_g)*(g\eta)*\overline{\omega_g}.$$
Thus $\eps(\eta) = \eta^{-1}\zeta\phi(\eta)$ which is precisely Reidemeister conjugation of $\zeta$, where $\phi$ is the automorphism
$$\phi(\eta) = \omega_g*(g\eta)*\overline{\omega_g}\,.$$
Further details can be found in \cite[Sec.~2.5]{Steckles}.  
\end{remark}

\section{Choreographies and braids}
\label{sec:choreography components}

We now apply the results of the previous section to the case in hand of $n$ distinct points in the plane and in particular to the choreographic loops.  We are interested in the action of finite subgroups $G$ of $\Gamma=\OO(2)\times S_n$ on $\Xn$ and we use some of the properties above to find the equivariant fundamental group $\pi_1^G(\Xn,x)$. (We ignore the temporal part $\Sone$ here as its action on $\Xn$ is trivial.)

\subsection{Configuration space and braids}
First we recall some facts about braid groups, and introduce some notation. Two useful references are the books by Kessel and Turaev \cite{KTbook} and Farb and Margalit \cite{FarbMargalit}. It was first observed by Fox and Neuwirth \cite{FoxN} that the fundamental group of the space $X^{(n)}$ is (isomorphic to) the pure braid group $P_n$, and it was moreover proved by Fadell and Neuwirth \cite{FadellN} that $\Xn$ is aspherical.  Recall that the braid groups sit in a short exact sequence
\begin{equation}\label{eq:braid ses}
\trivial \to P_n\lra B_n\stackrel{\pi}{\lra} S_n\to \trivial,
\end{equation}
where $B_n$ is the full braid group on $n$ strings, and $S_n$ the permutation group. Furthermore, $S_n$ acts free  ly on $\Xn$ and the fundamental group of the quotient space $\Xn/S_n$ is the full braid group $B_n$ (as observed in \cite{FoxN}).  It follows that the equivariant fundamental group $\pi_1^{S_n}(\Xn,x)$ is isomorphic to the braid group $B_n$, and that (\ref{eq:EFG ses}) becomes (\ref{eq:braid ses})  (i.e., $\beta$ becomes $\pi$). Here we are taking the usual base point considered for braid groups, namely $x$ is any point in $\Xn$ where the $z_j$ are placed in sequence along the real line. 

Let us denote\footnote{these are usually denoted $\sigma_1,\dots,\sigma_{n-1}$, but this would conflict with our use of $\sigma_i$ as permutations} the generators of the braid group $B_n$ by  $\braid_1,\braid_2,\dots,\braid_{n-1}$, where $\braid_i$ represents the crossing of string $i+1$ over (in front of) string $i$. In terms of motion of points in the plane, $\br_1$ represents the clockwise interchange of points $i$ and $i+1$.  The centre of both groups $B_n$ and $P_n$ is infinite cyclic, generated by the \emph{full twist}  $\Delta^2$, where 
$$\Delta = (\braid_{1}\braid_{2} \dots \braid_{n-1}) (\braid_{1}\braid_{2} \dots \braid_{n-2}) \cdots (\braid_{1}).$$ 
The full twist represents a clockwise rotation through $2\pi$ of the whole braid.

Denote by $\delta$ the element $\delta=\braid_1\braid_2\dots\braid_{n-1}$ 
(see Figure \ref{fig:braid delta5}).  In the exact sequence (\ref{eq:braid ses}) we have $\pi(\delta)=\sigma_1 = (1\;2\;\dots\;n)$. It  follows that $\delta^n\in P_n$ and in fact $\delta^n=\Delta^{2}$ (as is not hard to see geometrically).

\begin{remark}\label{rmk:braids to permutations}
  Some care should taken to express correctly the permutation associated to a given braid, since we are using the usual \emph{left} action of the permutation group. Now, as homotopy classes of curves in $\Xn$, the product $\br\br'$ is the class $[\br*(\sigma\cdot\br')]$, where $\sigma=\pi(\br)$.  It follows that $\pi(\br\br')=\pi(\br)\circ\pi(\br')$ (so first applying $\pi(\br')$ on $\Xn$ and then $\pi(\br)$ --- this is consistent with the usual relation between free group actions and fundamental groups, as described for example in Spanier \cite[Chap.~2]{Spanier}). In particular, the permutation associated to the braid $\delta=\br_1\br_2\br_3\br_4\in B_5$ shown in Fig.~\ref{fig:braid delta5} is $\pi(\delta)=(1\;2\;3\;4\;5)$ (rather than the usual $(5\;4\;3\;2\;1)$). 
\end{remark}

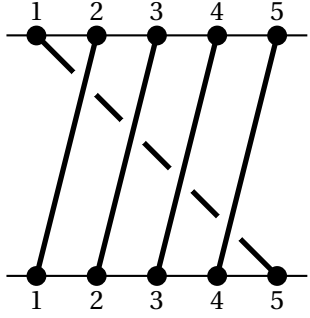
\begin{figure}
\begin{center}\psset{unit=0.8}
 \begin{pspicture}(-2,-2)(2,2.5)
\psline(-2.5,2)(2.5,2) 
  \rput(-2,2.4){1} \rput(-1,2.4){2} \rput(0,2.4){3} \rput(1,2.4){4} \rput(2,2.4){5}
 \psdots[dotsize=5pt](-2,2)(-1,2)(0,2)(1,2)(2,2)
  \psline[linewidth=2pt](2,-2)(-2,2)  
\psdots[dotsize=8pt,dotstyle=o,linecolor=white,fillcolor=white](1.2,-1.2)(0.4,-0.4)(-0.4,0.4)(-1.2,1.2) 
  \psline[linewidth=2pt](-2,-2)(-1,2)  
  \psline[linewidth=2pt](-1,-2)(0,2)  
  \psline[linewidth=2pt](0,-2)(1,2)  
  \psline[linewidth=2pt](1,-2)(2,2)  
 \psline(-2.5,-2)(2.5,-2) 
 \psdots[dotsize=5pt](-2,-2)(-1,-2)(0,-2)(1,-2)(2,-2)  
  \rput(-2,-2.4){1} \rput(-1,-2.4){2} \rput(0,-2.4){3} \rput(1,-2.4){4} \rput(2,-2.4){5}
\end{pspicture}
\end{center}
\caption{The braid $\delta$ for $n=5$. One has $\pi(\delta)=(1\;2\;3\;4\;5)=\sigma_1$.}
\label{fig:braid delta5}
\end{figure}

There is a homomorphism $\chi:B_n\to\ZZ$, generated by $\chi(\braid_i)=1$ (for all $i$), and called the \emph{exponent sum} of the braid; it measures the algebraic number of crossings in the braid.  For example  $\chi(\delta)=n-1$ while $\chi(\Delta)=\half n(n-1)$.  The important property in the present context is that $\chi$ is invariant under conjugation, as is easy to see.   (More abstractly, the first homology group of $B_n$ is $\ZZ$, and $\chi$ is the natural projection $B_n\to H_1(B_n)\simeq B_n/[B_n,B_n]$.)

We define two subgroups of the braid group which will be useful when we consider choreographies with non-trivial core below.

\begin{definitions} \label{def:winding braids}
Firstly, we let $B_{n,1}$ be the group
\begin{equation}\label{eq:B_{n,1}}
B_{n,1} = \{\br\in B_{n+1}\mid \pi(\br)(0)=0\}
\end{equation}
where the $n+1$ strings are labelled $0,1,\dots,n$. Its generators are described in the review by Vershinin \cite{Vershinin-loops} (where $B_{n,1}$ is denoted $Br_{1,n+1}$).

Secondly, we define a subgroup  $P_{n,1(c)}$ of $P_{n+1}$ as follows: again label the strings $0,\dots, n$. Then for each pure braid $p\in P_{n+1}$ there are integers $w_{ij}(p)$ given by the number of times string $i$ winds around string $j$.  Then put
$$P_{n,1(c)} = \{p\in P_{n+1} \mid w_{j0}(p) \equiv 0 \bmod c,\;\forall j=1\dots,n\}.$$
That is, the number of times each string winds around the 0-string is an integer multiple of $c$. (This also has a homological interpretation, as the winding numbers of a loop are the components of the image of the element of $P_n$ in its first homology group: $H_1(P_n) \simeq \ZZ^{n(n-1)/2}$.)
\end{definitions}

Recall that $\sigma_1\in S_n$ is the permutation $\sigma_1(j)=j+1\bmod n$.

\begin{proposition}\label{prop:aspherical Y}
Suppose $c$ divides $n$ with $1<c<n$, and $\ell$ is coprime to $c$. Let $K$ be the subgroup of\/ $\Gamma$ generated by $(R_{2\pi\ell/c}, \,\sigma_1^{\nhat})$, where $\nhat=n/c$, so $K$ is cyclic of order $c$.  The space $Y=\Fix(K,\Xn)$ is connected and aspherical, with fundamental group
$\pi_1(Y,y) \simeq P_{\nhat,1(c)}<P_{\nhat +1}$. %
\end{proposition}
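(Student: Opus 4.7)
The plan is to realize $Y$ as a connected finite covering of $\Xnhat$ via the map $\Psi:Y\to\Xnhat$ of \eqref{eq:Psi}, and then to read off connectedness, asphericity, and the fundamental group from the Galois correspondence for covering spaces. By the analysis already given in Section~\ref{sec:non-trivial core}, $\Psi$ is a smooth covering of degree $c^{\nhat}$ (Lemma~\ref{lemma:avoiding origin} ensures that no $z_j$ vanishes, so each coordinate map $z\mapsto z^c$ is a genuine $c$-sheeted covering $\CC^*\to\CC^*$). Since $\Xnhat$ is the configuration space of $\nhat$ distinct points in the punctured plane, the Fadell--Neuwirth theorem identifies it as aspherical with fundamental group $P_{\nhat+1}$, where the puncture is thought of as a fixed ``$0$-string'' and the $\nhat$ particles as the remaining strings.

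Next I would compute the monodromy of $\Psi$ and read off $\pi_1(Y)$ as the stabilizer of a point in the fiber. Fixing a basepoint $\uhat=(w_1,\ldots,w_{\nhat})$, its fiber under $\Psi$ is a $(\ZZ/c)^{\nhat}$-torsor parametrized by independent choices of $c$-th roots of each $w_j$, with the remaining $z_{\nhat+1},\ldots,z_n$ then forced by \eqref{eq:isotropy}. For each $j=1,\dots,\nhat$, the pure braid in which string $j$ winds once around the $0$-string (and no other crossings occur) lifts to a path in $Y$ along which the chosen $c$-th root of $w_j$ is rotated by $\ee^{2\pi\ii/c}$; so it acts on the fiber by $+1$ in the $j$-th coordinate and trivially elsewhere. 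More generally, $[\eta]\in P_{\nhat+1}$ acts on the $j$-th coordinate of the fiber by $w_{j0}([\eta])\bmod c$. These generators already act transitively on the fiber, so $Y$ is connected, and the stabilizer of any point---equivalently, the image of $\pi_1(Y,y)$ inside $\pi_1(\Xnhat)$---is precisely
$$\{[\eta]\in P_{\nhat+1}\mid w_{j0}([\eta])\equiv 0\bmod c\text{ for all }j=1,\ldots,\nhat\} = P_{\nhat,1(c)},$$
by Definition~\ref{def:winding braids}. Asphericity of $Y$ is then automatic, since any covering of an aspherical space has the same (trivial) higher homotopy groups.

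The delicate step will be the monodromy calculation: I must check carefully that the topological winding count of string $j$ around the $0$-string, as I describe it geometrically, is indeed the homological invariant $w_{j0}$ fixed in Definition~\ref{def:winding braids}, and that no unexpected interaction between strings disturbs the ``shift by $+1$ in a single coordinate'' description for the generators $A_{0j}$. A secondary, easier point to verify is that the lifts really remain inside $Y$ rather than picking up a collision: two particles $R^{k}z_{j}$ and $R^{k'}z_{j'}$ can coincide only when $z_{j'}=R^{k-k'}z_j$, which forces $w_{j'}=w_j$, and this is already excluded on $\Xnhat$.
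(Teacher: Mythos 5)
Your proof is correct and follows essentially the same route as the paper: realize $Y$ as the degree-$c^{\nhat}$ cover $\Psi:Y\to\Xnhat$, identify $\Xnhat$ as aspherical with $\pi_1\simeq P_{\nhat+1}$, and characterize $\pi_1(Y)$ inside $P_{\nhat+1}$ by the condition that the winding numbers about the $0$-string vanish mod $c$. The paper packages the last step as the short exact sequence coming from the free deck action of $\ZZ_c^{\nhat}$ on $Y$, whereas you phrase it as a monodromy/stabilizer computation; these are equivalent, and your explicit transitivity check is a slightly more careful way of establishing the connectedness of $Y$, which the paper leaves implicit.
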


\begin{proof}
We saw in Sec.~\ref{sec:non-trivial core}  that $Y$ is a cover of $\Xnhat$, of degree $c^{\nhat}$.    The homotopy type of $\Xnhat$  is found from the fibration $X^{(\nhat+1)}\to\bb{C}$ given by 
$$(z_0,z_1,\dots,z_{\nhat})\longmapsto z_0.$$ 
The fibre over $0$ is $\Xnhat$ and since the base is contractible, the total space and the fibre are homotopic. That is, $\Xnhat \sim X^{(\nhat +1)}$, and the latter is aspherical with fundamental group $P_{\nhat+1}$ as already pointed out.  Now consider the covering $\Psi:Y\to\Xnhat$ of degree $c^{\nhat}$ given in \eqref{eq:Psi}, which is in fact the quotient map for the action of $\ZZ_c^{\nhat}$ acting by multiplication by $c^{\mathrm{th}}$ roots of unity:
\begin{equation}\label{eq:Zcnhat action}
 (\omega_1,\dots,\omega_{\nhat})\cdot(z_1,\dots,z_n) = (\omega_{1}z_1,\dots,\omega_{\jhat}z_j,\dots,\omega_{\nhat}z_n) \quad (\omega_j\in\ZZ_c),
\end{equation}
where $\jhat=j\bmod\nhat$. 
This gives the short exact sequence,
$$\trivial\to\pi_1(Y,x) \longrightarrow \pi_1(\Xnhat,\Psi(x)) \longrightarrow \ZZ_c^{\nhat}\to\trivial.$$
The projection to $\ZZ_c^{\nhat}$ is the winding number mod $c$ of each of the strings (particles) around the origin, so in order for a loop to be in $\pi_1(Y,x)$ these winding numbers must vanish, modulo $c$. 
\end{proof}

For the following theorem, we need to define an involution $\br\mapsto \bar \br$ on $B_n$:  given any braid $\br\in B_n$, then $\bar\br$ is the same braid but with every overcrossing changed into an undercrossing and vice versa. This is a homomorphism, and clearly this leaves the resulting permutation unaffected: $\pi(\bar\br) = \pi(\br)$. Algebraically, let $\br$ be any word written in terms of the generators $\braid_j$, then $\bar\br$ is the same word with each instance of a generator replaced by its inverse. It follows that the exponent sum satisfies $\chi(\bar\br)=-\chi(\br)$. For example, if $\br=\braid_i\braid_j^{-2}$ then $\bar\br = \braid_i^{-1}\braid_j^2$. One can show easily that $\bar\Delta=\Delta^{-1}$. 

Recall the notation $\times_{\alpha}$ and the curve $(-\theta)_{g\cdot x}$ defined in Example~\ref{ex:pi_1 S^1}. Recall also that $R_{2\pi\theta}$ is the rotation through $2\pi\theta$ in the plane, and $\kappa$ is the reflection in the $x$-axis (complex conjugation).

\begin{theorem} \label{thm:EFG for Gamma}
For\/ $\Gamma=\OO(2)\times S_n$ acting on $X^{(n)}$ with $x$ the usual base point for braids, there is an isomorphism
\begin{eqnarray*}
\qquad  \pi_1^\Gamma(\Xn,x) &\stackrel{\sim}{\lra}&     
  (B_n\times_{\alpha}\RR)\rtimes\mathbb{Z}_2\\[6pt]
    (\gamma,\;g) 
  &\longmapsto& 
  \left\{\begin{array}{ll}
    ([\gamma*(-\theta)_{g\cdot x},\, \theta], e) & \mbox{if }\; g=(R_{2\pi \theta},\,\sigma)\\
    ([\gamma,\, 0], \kappa) & \mbox{if }\; g=(\kappa,\sigma),
  \end{array}\right.
\end{eqnarray*}
where $\sigma\in S_n$. The image of the remaining elements, so those of the form $g=(R_{2\pi\theta}\kappa,\,\sigma)$, can be determined using the semidirect product structure coming from $\kappa[\br,s] = [\bar\br,-s]$.  The homomorphism $\alpha:\ZZ\to B_n$ is given by $\alpha(n)=\Delta^{2n}$.  In terms of this isomorphism, the projection $\beta:\pi_1^\Gamma\to\Gamma$ is given by
$$\beta([\br,\theta],\kappa^r) = (R_{2\pi \theta}\kappa^r,\,\pi(\br))\in\OO(2)\times S_n,$$
where $\pi:B_n\to S_n$ is the usual projection. 
\end{theorem}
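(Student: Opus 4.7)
The strategy is to exploit the decompositions $\Gamma = \OO(2) \times S_n$ and $\OO(2) = \Sone \rtimes \ZZ_2$, applying the machinery of Section~\ref{sec:equivariant loops} in stages. First I would invoke Proposition~\ref{prop:EFG2}: since $S_n$ is a finite normal subgroup of $\Gamma$ acting freely on $\Xn$ (collisions are excluded), passing to the unordered configuration space $Y_n := \Xn/S_n$ gives
\[
  \pi_1^\Gamma(\Xn,x) \;\simeq\; \pi_1^{\OO(2)}(Y_n,\bar x),
\]
where $\pi_1(Y_n,\bar x) \simeq B_n$ by Fox--Neuwirth and $Y_n$ is aspherical by Fadell--Neuwirth.

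Next I would apply the short exact sequence~(\ref{eq:EFG-normal subgp}) to the normal subgroup $\Sone \lhd \OO(2)$ with quotient $\ZZ_2$, combined with Example~\ref{ex:pi_1 S^1}, which yields $\pi_1^{\Sone}(Y_n,\bar x) \simeq B_n \times_\alpha \RR$. Here $\alpha(1) \in B_n$ is the homotopy class of the $\Sone$-orbit of $\bar x$, which rotates the entire base configuration through a full revolution around the origin; this is the geometric realization of the full twist, so $\alpha(1) = \Delta^2$ (the sign being fixed by the conventions of Remark~\ref{rmk:braids to permutations}), hence $\alpha(m) = \Delta^{2m}$ as required.

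To assemble the semidirect product with $\ZZ_2$, I would lift the nontrivial class to $\kappa \in \OO(2)$ and determine its action on $B_n \times_\alpha \RR$. On $\RR$ it acts by $\theta \mapsto -\theta$, coming from $\kappa R_\theta \kappa^{-1} = R_{-\theta}$. On $B_n$, the reflection $\kappa$ fixes $\bar x$ (which lies on the horizontal axis) and therefore induces an automorphism of $\pi_1(Y_n,\bar x) = B_n$; geometrically, reflecting a braid diagram swaps over- and under-crossings, acting on the Artin generators by $\br_i \mapsto \br_i^{-1}$, which extends as a homomorphism to exactly the involution $\br \mapsto \bar\br$ of the statement. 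Combining, $\kappa \cdot [\br,\theta] = [\bar\br,-\theta]$, and the sequence splits via $\kappa \mapsto ([e,0],\kappa)$, giving $\pi_1^{\OO(2)}(Y_n,\bar x) \simeq (B_n \times_\alpha \RR) \rtimes \ZZ_2$. The explicit formula in the statement is then obtained by tracking $(\gamma,g)$ through this chain of isomorphisms: for $g=(R_{2\pi\theta},\sigma)$, the inverse isomorphism in Example~\ref{ex:pi_1 S^1} produces $([\gamma*(-\theta)_{g\cdot x},\theta], e)$ (the concatenation with the reverse orbit arc turns an $\Sone$-equivariant path into a genuine loop in $Y_n$), while for $g=(\kappa,\sigma)$ no orbit arc is needed and the image is $([\gamma,0],\kappa)$; the formula for $\beta$ is then immediate.

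The main obstacle will be justifying the two geometric identifications rigorously within the conventions of the paper: that the $\kappa$-action on $B_n$ is the crossing-reversal $\br \mapsto \bar\br$, and that the $\Sone$-orbit of $\bar x$ represents precisely $\Delta^2$ (including sign). Both reduce to checking standard pictures --- reflection of a braid diagram inverts every crossing; a full planar rotation realizes the full twist --- against the already-fixed conventions that $\br_i$ is a \emph{clockwise} half-turn and that $\pi(\br\br') = \pi(\br)\circ\pi(\br')$ as in Remark~\ref{rmk:braids to permutations}.
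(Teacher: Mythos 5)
Your proposal follows essentially the same route as the paper's own proof: first quotient by the freely acting $S_n$ via Proposition~\ref{prop:EFG2} to land in $\pi_1^{\OO(2)}(\Xn/S_n,\bar x)$ with fundamental group $B_n$, then handle the $\SO(2)$ part via Example~\ref{ex:pi_1 S^1} to get $B_n\times_\alpha\RR$ with $\alpha(1)=\Delta^2$, and finally adjoin $\kappa$ acting by crossing reversal $\br\mapsto\bar\br$ and $\theta\mapsto-\theta$. Your write-up is, if anything, slightly more explicit than the paper's about the splitting of the $\ZZ_2$ extension and about why the $\Sone$-orbit of the base configuration realizes the full twist, but the structure and the key identifications are identical.
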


\begin{proof} We use Proposition~\ref{prop:EFG2} and Eq.~\eqref{eq:EFG-normal subgp}, with $N = S_n \lhd \OO(2)\times S_n$.  Firstly, $S_n$ acts freely on $\Xn$ so that
\begin{equation}\label{eq:EFG for S_n action}
\pi_1^{S_n}(\Xn,x) \simeq \pi_1(\Xn/S_n,\,\bar x) \simeq B_n.
\end{equation}

Now apply Proposition~\ref{prop:EFG2} to the $\SO(2)$-action to obtain (we identify $\Sone\stackrel{\sim}{\lra}\SO(2)$ by $\theta\mapsto R_{2\pi\theta}$)
\begin{equation}\label{eq:EFG for SO(2) x S_n}
\pi_1^{\SO(2)\times S_n}(\Xn,\,x) \simeq \pi_1^{\SO(2)}(\Xn/S_n,\hat x) \simeq \pi_1(\Xn/S_n, \hat x) \times_{\alpha}\RR.
\end{equation}
The second isomorphism, by Example~\ref{ex:pi_1 S^1}, is $(\hat\gamma,\,\theta) \mapsto [\hat\gamma*(-\theta)_{\theta\cdot x},\;\theta]$, where $\hat\gamma$ is a path in $\Xn/S_n$ with phase $\theta$.  Let $\gamma$ be the unique lift of the path $\hat\gamma$ to $\Xn$ with base point $x$. Since $\hat\gamma(1) = \theta\cdot\hat x$, we have that $\gamma(1)=\sigma\cdot\theta\cdot x = \theta\cdot\sigma\cdot x$ for some $\sigma\in S_n$. That is, $\gamma(1)=g\cdot x$ for $g=(R_{2\pi\theta},\sigma)\in\SO(2)\times S_n$. Consequently, $\gamma*(-\theta)_{g\cdot x}$ is a path from $x$ to $\sigma\cdot x$ and so represents a braid, as required.

The final result then follows by using $\kappa\in\OO(2)$---the reflection in the $x$-axis (real line) which fixes the base point and makes every overcrossing into an undercrossing and vice versa, so changing each $\braid_j$ into $\braid_j^{-1}$.

For the homomorphism $\beta$, the element $s\in\RR$ corresponds to rotation through an angle of $2\pi s$ and $\kappa$ to the reflection in the $x$-axis.  The choice of semi-direct product structure then determines the first component of $\beta$.  The second component $\pi(b)$ is determined by the projection associated to Eq.\,\eqref{eq:EFG for S_n action} above.
\end{proof}

For future reference, if $p\in P_n$, which we consider as the element $(p,0,e)\in (B_n\times_{\alpha}\RR)\rtimes\ZZ_2$, then conjugation by $p$ is given by,
\begin{equation}\label{eq:conjugation by pbraids}
p(\br ,\theta,e)p^{-1} = (p\br p^{-1},\theta,e)\quad\mbox{and}\quad 
p(\br ,\theta,\kappa)p^{-1} = (p\br \bar p^{-1},\theta,\kappa).
\end{equation}
The first of these just involves the usual conjugacy in the braid group, while the second uses a twisted (Reidemeister) conjugacy: in the notation of Remark~\ref{rmk:Reidemeister} it uses $\phi(p)=\bar p$.

\begin{example} \label{ex:EFG for trivial core} 
The equivariant fundamental groups for the non-reversing symmetry groups without core are subgroups of the group $\pi_1^\Gamma(\Xn,x)$ found in Theorem~\ref{thm:EFG for Gamma}. For the cyclic group $\Sigma_n<S_n$ (the `spatial component' of the choreography group $\Chor_n$), one has
\begin{equation} \label{eq:CB_n}
 \pi_1^{\Sigma_n}(\Xn,x) \simeq CB_n,
\end{equation}
a group we call the \defn{cyclic braid group}, and which is the subgroup of $B_n$ generated by $P_n$ and $\delta$; it is of course the inverse image under $\pi:B_n\to S_n$ of the cyclic group $\Sigma_n$.  The homomorphism $\beta:\pi_1^{\Sigma_n}\to \Sigma_n$ here is just the restriction of $\pi$ to this cyclic braid group. 
\end{example}

Combining this example with the $\SO(2)$ action we can find $\pi_1^G(\Xn,x)$ for $G=C(n,k/\ell)$, using Example~\ref{ex:pi_1 S^1}. Recall that provided $(n,k)=1$, $G$ is cyclic of order $nk$ and generated by $g=(R_{2\pi a\ell/k}, \sigma_1^b)$, where $an-bk=1$.

\begin{corollary}\label{coroll:EFG for trivial core} 
(1) Let  $G=C(n,k/\ell)$ with $(n,k)=1$.  There is an isomorphism
\begin{equation}\label{eq:EFG for coprime n,k}
\pi_1^G(\Xn,x) \stackrel{\sim}{\lra} CB_n\times_{\alpha}\ZZ(1/k),
\end{equation}
where $\ZZ(1/k)$ is the subgroup of\/ $\RR$ consisting of integer multiples of $1/k$. Given $(\gamma,g)\in\pi_1^G(X,x)$ for $g=(R_{2\pi a\ell/k},\sigma_1^b)$, the corresponding element of the right hand side is
$$[\gamma*(-\theta)_{g\cdot x},\; a\ell/k],$$ 
for $\theta=a\ell/k\in\ZZ(1/k)$ (the path $\theta_p$  is defined in Example~\ref{ex:pi_1 S^1}).  Under this isomorphism, the projection $\beta:\pi_1^G\to G<\SO(2)\times\Sigma_n$ is given by
$$\beta([\br, \theta]) = (R_{2\pi\theta}, \pi(\br)).$$  

\medskip

(2) Let $G=C'(n,2)$, which is cyclic of order $2n$ and its projection to $\Gamma$ is generated by $(\kappa,\sigma_2)\in\Gamma$.  There is an isomorphism
\begin{equation}\label{eq:EFG for C'}
\pi_1^G(\Xn,x) \stackrel{\sim}{\lra} CB_n\rtimes \ZZ_2
\end{equation}
where the semidirect product comes from $\kappa\cdot\br=\bar\br$.  Any relative loop $\gamma$ with $g=(\kappa, \sigma_2)$ maps under the isomorphism to $(\gamma,\kappa)$, and $\beta(\gamma,\kappa^r) = (\kappa^r,\pi(\gamma))$.
\end{corollary}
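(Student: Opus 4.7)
The plan is to derive both isomorphisms directly from Theorem~\ref{thm:EFG for Gamma}, using the property that $\pi_1^H(X,x)=\beta^{-1}(H)$ for any subgroup $H<\Gamma$. In each case I will identify the preimage of $G$ under the projection $\beta\colon(B_n\times_\alpha\RR)\rtimes\ZZ_2\to\OO(2)\times S_n$, and verify that the induced structure matches the claim.

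For part~(1), the projection of $G=C(n,k/\ell)$ to $\Gamma$ is the subgroup $\langle R_{2\pi\ell/k},\,I\rangle\times\langle I,\,\sigma_1\rangle$ of $\SO(2)\times\Sigma_n$. Since $G$ contains no reflections, $\beta^{-1}(G)$ is contained in the subgroup $B_n\times_\alpha\RR$ (i.e.\ the $r=0$ component). The condition $\beta([\br,\theta])=(R_{2\pi\theta},\pi(\br))\in G$ then decouples: using $(k,\ell)=1$ we get $\langle R_{2\pi\ell/k}\rangle=\langle R_{2\pi/k}\rangle$, so $\theta\in\ZZ(1/k)\bmod\ZZ$; and $\pi(\br)\in\Sigma_n$ is by definition equivalent to $\br\in CB_n$. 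The identifications $[\br,\theta+n]=[\Delta^{2n}\br,\theta]$ preserve both conditions (since $\Delta^2\in P_n\subset CB_n$), so $\beta^{-1}(G)=CB_n\times_\alpha\ZZ(1/k)$ as a well-defined subgroup. The element-wise formula is the direct restriction of the map in Theorem~\ref{thm:EFG for Gamma} with $\theta=a\ell/k$, which is the unique representative of the rotational component of $g=(R_{2\pi a\ell/k},\sigma_1^b)$ in the cyclic group generated by $R_{2\pi\ell/k}$.

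For part~(2), since $n$ is odd the subgroup $G=\langle(\kappa,\sigma_2)\rangle$ of $\Gamma$ has order $2n$, with generic element $(\kappa^m,\sigma_2^m)$. In particular the rotational component is always trivial, forcing $\theta=0$ for every element of $\beta^{-1}(G)$. For even $m$ the element is $(I,\sigma_1^{m/2})\in I\times\Sigma_n$, and for odd $m$ it is $(\kappa,\sigma_2\sigma_1^{(m-1)/2})\in\kappa\times\Sigma_n$; in both cases the permutation lies in $\Sigma_n$ (since $\sigma_2\in\Sigma_n$), so $\br\in CB_n$. With $\theta=0$ the $\times_\alpha$ identifications become trivial, so we may drop the $\theta$-coordinate, yielding $\beta^{-1}(G)\simeq CB_n\rtimes\ZZ_2$ with the semidirect product structure inherited from Theorem~\ref{thm:EFG for Gamma}, namely $\kappa\cdot\br=\bar\br$ (and $\bar\br\in CB_n$ since $\pi(\bar\br)=\pi(\br)$).

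The only delicate bookkeeping is checking that the $\times_\alpha$ identifications restrict sensibly to the subgroups in question and that the semidirect structure of part~(2) is inherited cleanly; both are routine, so there is no substantial obstacle beyond carefully unpacking Theorem~\ref{thm:EFG for Gamma} in each case.
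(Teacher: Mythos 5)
Your proposal is correct and follows exactly the route the paper takes: the paper's proof of this corollary is the one-line observation that both statements are immediate from Theorem~\ref{thm:EFG for Gamma} together with the restriction property $\pi_1^H(X,x)=\beta^{-1}(H)$, and your computation of $\beta^{-1}(G)$ in each case (yielding $CB_n\times_\alpha\ZZ(1/k)$ for $C(n,k/\ell)$ and $CB_n\rtimes\ZZ_2$ for $C'(n,2)$ after normalising $\theta=0$ via the $\times_\alpha$ identification) is precisely the detail being elided. The bookkeeping you flag — that the identifications $[\br,\theta+m]=[\Delta^{2m}\br,\theta]$ respect the subgroup conditions because $\Delta^2\in P_n\subset CB_n$, and that $\bar\br\in CB_n$ since $\pi(\bar\br)=\pi(\br)$ — is handled correctly.
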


\begin{proof}
Since the groups are subgroups of $\Gamma$, these statements are all immediate consequences of the more general Theorem~\ref{thm:EFG for Gamma}. 
\end{proof}

\subsection{Choreographies}

We now return to the case of choreographies, where the equivariant loops are those arising from the classification of Section~\ref{sec:classification}. Since we are looking at non-reversing symmetry groups in this section, there are two cases: $C(n,k/\ell)$ and, if $n$ is odd, $C'(n,2)$, defined in Sec.~\ref{sec:symmetry types}. (We consider the time-reversing symmetries in a separate paper.)

Recall that for $C(n,k/\ell)$ we denote the core by $K$: it is the pointwise isotropy of a loop with this symmetry and is the cyclic group of order $c=(n,k)$ generated by $(R_{2\pi\ell/c},\,\sigma_1^{\nhat})\in\Gamma$.  If $c=1$ or if $G=C'(n,2)$ then $K$ is trivial.

Combining Propositions~\ref{prop:fix is Lambda^g}, \ref{prop:aspherical Y}, and Theorem~\ref{thm:connected components} we deduce the main result of this section.  For the part with non-trivial core, it will be useful to use the subgroup $B_{\nhat,1}$ defined in Eq.~\eqref{eq:B_{n,1}}. Note that the  projection $\pi:B_{\nhat+1}\to S_{\nhat+1}$ restricts to $B_{\nhat,1}\to S_{\nhat}$. 
Define the element $\deltahat =  \braid_0^{2}\braid_1\braid_2\dots\braid_{\nhat-1}\in B_{\nhat,1}$ (see Figure~\ref{fig:deltahat}), and note that $\pi(\deltahat)=\sigmahat_1 = (1\;2\;\dots\;\nhat)\in S_{\nhat+1}$. In fact, though we don't use it, it is well known that $(\deltahat)^{\nhat-1}=\Delta^2$.

\begin{figure}[t] 
\begin{center}\psset{unit=0.8}
 \begin{pspicture}(-3,-2)(3,2.5)
\psline(-2.5,2)(2.5,2) 
  \rput(-2,2.4){0} \rput(-1,2.4){1} \rput(0,2.4){2} \rput(1,2.4){3} \rput(2,2.4){4}
 \psdots[dotsize=5pt](-2,2)(-1,2)(0,2)(1,2)(2,2)
  \pscurve[linewidth=2pt](2,-2)(-1,-0.5)(-2.5,0.8)(-1,2)  
\psdots[dotsize=6pt,dotstyle=o,linecolor=white,fillcolor=white](-2,0.1)(-0.7,-0.7)(0.2,-1.15)(1.1,-1.58) 
  \psline[linewidth=2pt](-1,-2)(0,2)  
  \psline[linewidth=2pt](0,-2)(1,2)  
  \psline[linewidth=2pt](1,-2)(2,2)  
  \psline[linewidth=2pt](-2,-2)(-2,1.2)  
  \psline[linewidth=2pt](-2,1.6)(-2,2)  
   \psline(-2.5,-2)(2.5,-2) 
 \psdots[dotsize=5pt](-2,-2)(-1,-2)(0,-2)(1,-2)(2,-2)  
  \rput(-2,-2.4){0} \rput(-1,-2.4){1} \rput(0,-2.4){2} \rput(1,-2.4){3} \rput(2,-2.4){4}
\end{pspicture}
\end{center}
\caption{The braid $\deltahat\in B_{\nhat,1}$ for $G=C(n,k/\ell)$ with $(n,k)=c>1$, shown here for $\nhat=4$; note that  $\pi(\deltahat)=(1\;2\;3\;4) =\sigmahat_1$. See Remark~\ref{rmk:braids to permutations}} 
\label{fig:deltahat}
\end{figure}

\begin{theorem} \label{thm:connected components of choreographies}
Let $G$ be any of the symmetry groups $C(n,k/\ell)$ or $C'(n,2)$. \\
(1) The set of connected components of the space $\Fix(G,\Lambda\Xn)$ is in 1-1 correspondence with the following sets:
\begin{itemize}
\item for $G=C(n,k/\ell)$ with $(n,k)=1$, the set of\/ $P_n$-conjugacy classes in the coset $\delta^bP_n\subset CB_n$, where $bk\equiv -1 \bmod n$;
\item for $G=C(n,k/\ell)$ with $(n,k)=c>1$, the set of\/ $P_{\nhat,1(c)}$-conjugacy classes in the coset $\deltahat^bP_{\nhat,1(c)}\subset B_{\nhat,1}$ where $\deltahat\in \braids_{\nhat,1}$ is defined above, and $bk\equiv -c\bmod n$; 
\item for $G=C'(2,n)$, the set of orbits of the twisted conjugacy action of\/ $P_n$ on the coset $\delta'P_n$, where $\delta'=\delta^{(n+1)/2}$ and the twisted conjugacy action is $p\cdot \br = p\br\bar p^{-1}$, for $p\in P_n$, and the definition of $\bar p$ precedes Theorem~\ref{thm:EFG for Gamma}.
\end{itemize}

(2) The connected component containing $u\in\Fix(G,\Lambda X)$ is aspherical with fundamental group isomorphic to the group $\Pi_u$ defined as follows:
\begin{itemize}
\item for $G=C(n,k/\ell)$ with $(n,k)=1$ then 
$$\Pi_u=\{p\in P_{n} \mid p\br =\br p\}$$
where $\br \in\delta^b P_n<B_n$ is the braid corresponding to the path $t\mapsto u(t/nk)$ for $t\in[0,1]$ and $bk\equiv -1\bmod n$;  
\item for $G=C(n,k/\ell)$ with $(n,k)=c>1$ it is the analogue, with $\Xn$ replaced by $Y$ and so $p\in P_{\nhat,1(c)}$ in place of\/ $P_n$, 
$$\Pi_u=\{p\in P_{\nhat,1(c)} \mid p\br =\br p\}$$ 
where $\br \in\deltahat^bP_{\nhat,1(c)}<B_{\nhat,1}$ is the braid corresponding to the path $t\mapsto u(ct/nk)$ for $t\in[0,1]$ and $bk\equiv -c\bmod n$;  
\item  for $C'(n,2)$ it is 
$$\Pi_u=\{p\in P_n \mid p\br  = \br \bar p\},$$
where $\br \in B_n$ is the braid corresponding to the path $t\mapsto u(t/2n)$ for $t\in[0,1]$. 
\end{itemize}
\end{theorem}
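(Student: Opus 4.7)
The strategy throughout is to apply Theorem~\ref{thm:connected components}: first use Proposition~\ref{prop:fix is Lambda^g} (combined with the lemma preceding it) to identify $\Fix(G,\Lambda\Xn)$ with the relative loop space $\Lambda^g Y$, where $Y=\Fix(K,\Xn)$, $K$ is the core of $G$, and $g\in\Gamma$ is the projection of an element of $G$ whose $\tau$-component generates $\tau(G/K)$. Theorem~\ref{thm:connected components}(1) then gives the connected components of $\Lambda^g Y$ as orbits of $\pi_1(Y,y)$-conjugation on the coset $\beta^{-1}(g)\subset\pi_1^{G/K}(Y,y)$, while Theorem~\ref{thm:connected components}(2), combined with asphericity of $Y$ (immediate in the trivial-core case, and supplied by Proposition~\ref{prop:aspherical Y} otherwise), realises the fundamental group of each component as the partial centraliser intersected with $\pi_1(Y,y)$. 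So the work is to compute $\pi_1^{G/K}(Y,y)$, locate the coset $\beta^{-1}(g)$, and describe the conjugation action in each of the three cases.

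For $G=C(n,k/\ell)$ with $(n,k)=1$ the core is trivial, so $Y=\Xn$, and Eq.~\eqref{eq:coprime generator} gives a generator projecting to $g=(R_{2\pi a\ell/k},\sigma_1^b)$ with $an-bk=1$, so $bk\equiv-1\bmod n$. Corollary~\ref{coroll:EFG for trivial core}(1) identifies $\pi_1^G(\Xn,x)$ with $CB_n\times_{\alpha}\ZZ(1/k)$; the coset $\beta^{-1}(g)$ is the set of $[\br,a\ell/k]$ with $\pi(\br)=\sigma_1^b$, which is exactly $\delta^bP_n$, and conjugation by $p\in P_n$ reduces to ordinary braid conjugation by Eq.~\eqref{eq:conjugation by pbraids}. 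For $G=C'(n,2)$ the core is again trivial; the generator projects to $(\kappa,\sigma_2)$, and since $n$ is odd one has $\sigma_2=\sigma_1^{(n+1)/2}$ (because $\sigma_2^2=\sigma_1$ and $2\cdot(n+1)/2\equiv1\bmod n$). Corollary~\ref{coroll:EFG for trivial core}(2) then realises $\beta^{-1}(g)$ as $\{(\br,\kappa):\br\in\delta'P_n\}$ with $\delta'=\delta^{(n+1)/2}$, while the semidirect-product rule $\kappa\br\kappa^{-1}=\bar\br$ forces the $P_n$-action to be the twisted conjugation $p\cdot\br=p\br\bar p^{-1}$ (Eq.~\eqref{eq:conjugation by pbraids}). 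Feeding these identifications into Theorem~\ref{thm:connected components} yields both parts of the theorem in these two cases, with the partial centralisers reducing to the stated (twisted) centralisers.

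The main obstacle is the non-trivial core case $(n,k)=c>1$. Here one must identify $\pi_1^{G/K}(Y,y)$ as an extension of $G/K$ by $P_{\nhat,1(c)}$ sitting inside $B_{\nhat,1}$, and show $\beta^{-1}(g)$ is precisely $\deltahat^bP_{\nhat,1(c)}$, where Eq.~\eqref{eq:c/nk generator} provides $an-bk=c$. The plan is to observe that the $G/K$-action on $Y$ normalises the free $\ZZ_c^{\nhat}$-action from the proof of Proposition~\ref{prop:aspherical Y} (the two actions commute up to a relabelling that stays inside $\ZZ_c^{\nhat}$), producing an action of $\ZZ_c^{\nhat}\rtimes(G/K)$ on $Y$ with $\ZZ_c^{\nhat}$ normal and free. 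Proposition~\ref{prop:EFG2} then gives
\[
\pi_1^{\ZZ_c^{\nhat}\rtimes G/K}(Y,y)\;\simeq\;\pi_1^{G/K}(\Xnhat,\bar y),
\]
and restricting $\beta$ on the left to $G/K$ recovers $\pi_1^{G/K}(Y,y)$ as the subgroup of the right-hand side whose underlying braid in $B_{\nhat+1}$ has winding numbers around strand $0$ divisible by $c$. Geometrically, the generator of $G/K$ descends on $\Xnhat$ to $(R_{2\pi a\ell/\khat},\sigma_1^b)$ (a case handled by Corollary~\ref{coroll:EFG for trivial core}(1) for the $\nhat$-body problem in the punctured plane), and it is represented by the braid $\deltahat$: the factor $\braid_1\cdots\braid_{\nhat-1}$ records one choreographic shift among the $\nhat$ moving strands, while the initial $\braid_0^2$ is forced by the core relation $z_{j+\nhat}=\ee^{2\pi\ii\ell/c}z_j$, which demands that moving from $z_j$ to $z_{j+1}$ pick up a full rotation around the origin once projected to $\Xnhat$. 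The delicate step, and the main technical hurdle, is verifying that the braid associated to the generator path $t\mapsto u(ct/(nk))$ lies in the specific coset $\deltahat^bP_{\nhat,1(c)}$: since $\deltahat^{\nhat}\notin P_{\nhat,1(c)}$ once $c>1$, the exponent $b$ is genuinely needed modulo $n$ and not merely modulo $\nhat$, and one must track winding numbers around the origin carefully (in addition to the permutation) to pin down the correct exponent. Once this identification is in place, conjugation by $p\in P_{\nhat,1(c)}$ is ordinary braid conjugation in $B_{\nhat+1}$, and Theorem~\ref{thm:connected components} completes the argument.
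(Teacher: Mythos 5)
Your proposal follows essentially the same route as the paper: reduce $\Fix(G,\Lambda\Xn)$ to a relative loop space $\Lambda^gY$ via Proposition~\ref{prop:fix is Lambda^g}, compute the equivariant fundamental group and locate the coset $\beta^{-1}(g)$, and read off components and centralisers from Theorem~\ref{thm:connected components}; your auxiliary group $\ZZ_c^{\nhat}\rtimes(G/K)$ in the non-trivial core case plays exactly the role of the paper's $N=\SO(2)\times(\ZZ_c^{\nhat}\rtimes S_{\nhat})$, with Proposition~\ref{prop:EFG2} applied to the free $\ZZ_c^{\nhat}$-action in both. The coset identification $\deltahat^bP_{\nhat,1(c)}$ that you single out as the delicate step is treated at a comparable level of detail in the paper itself, so there is no substantive gap.
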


For example, for choreographies with symmetry $\Chor=C(n,1)$, we have that $k=1$, so  $b=-1$. The relevant coset is therefore $\delta^{-1}P_n$. 

\begin{remarks}\label{rmk:cosets}
(a) It is easy to see that each of the cosets occurring in part (1) contains infinitely many conjugacy classes, since multiplication by powers of $\Delta^2$ preserves the coset, but changes the conjugacy class as is readily seen by considering the exponent sum (which is invariant under conjugacy).

\noindent(b) Note that in the theorem the $a,b$ are not unique.  However, different choices of $b$ differ by multiples of $n$, and $\delta^{b+rn}= \delta^b\Delta^{2r}$.  So firstly the cosets $\delta^bP_n$ and $\delta^{b+rn}P_n$ coincide, and secondly while the conjugacy classes are not the same, they are in (natural) 1--1 correspondence, because $\Delta^2$ is central.

\noindent(c) Finding centralizers of braid elements is an interesting problem in braid theory, see for example \cite{G-MW04}. However the known results do not address the twisted conjugacy problem relevant to $C'(n,2)$, although some general results do exist \cite{G-MV11}.
\end{remarks}

\begin{proof}
By Proposition~\ref{prop:fix is Lambda^g}, we know $\Fix(G,\Lambda\Xn)$ is homeomorphic to $\Lambda^g\Xn$ if the core is trivial, and to $\Lambda^gY = \Lambda^gY_K$ if the core $K$ is non-trivial, for some suitable $g\in \Gamma$ discussed below. In each case, given a loop $u\in\Fix(G,\Xn)$ one defines a suitable $g$-loop $\gamma$ according to Proposition~\ref{prop:fix is Lambda^g}. The set of connected components is then given in part (1) of Theorem~\ref{thm:connected components}, while the fundamental group of each component is given in part (2) of the same theorem.  We proceed by treating each class of symmetry group in turn, first the two cases with trivial core, and then the remaining groups with non-trivial core, which requires a more involved argument.

First suppose $G=C(n,k/\ell)$ with $(n,k)=1$. Here $G$ is cyclic of order $nk$ and its projection to $\Gamma$ is generated by  $g=(R_{2\pi a\ell/k},\, \sigma_1^b)\in \Gamma$, with $an-bk=1$.  Now $\pi_1^G$ is given in Corollary~\ref{coroll:EFG for trivial core}, and $(\gamma,g)$ represents an element $[\br , r/k]\in CB_n\times_{\alpha}\ZZ(1/k)\simeq\pi_1^G$ for some $\br \in \braids_n$ and $r\in\ZZ$. Then $\beta([\br ,\,r/k])=(R_{2\pi r\ell/k},\pi(\br ))\in \SO(2)\times S_n$ so $r=a$ and  $\pi(\br )=\sigma_1^b$ so that $\br $ belongs to the coset $\delta^bP_n$.  The final statement in this case then follows from the expression for conjugation given in \eqref{eq:conjugation by pbraids}.

Next suppose $u$ has symmetry $G=C'(n,2)$, which is the cyclic group of order $2n$ whose projection to $\Gamma$ is generated by $g'=(\kappa,\sigma_2)$ (recall $n$ is odd, so $\sigma_2=\sigma_1^h$ with $h={(n+1)/2}$).  Then $\beta^{-1}(g')\subset \pi_1^\Gamma(\Xn,x)$ is the coset $(\delta^hP_n,0,\kappa)<\pi_1^\Gamma(\Xn,x)$ (see Theorem~\ref{thm:EFG for Gamma} for notation). It then follows from the second part of \eqref{eq:conjugation by pbraids} that the conjugation by $p\in P_n$ is given by $\br\mapsto p\br\bar p^{-1}$.

Finally suppose $G$ has non-trivial core, so $G=C(n,k/\ell)$ with $(n,k)=c>1$. Then by Proposition~\ref{prop:fix is Lambda^g}  we have $\Fix(G,\Lambda\Xn)\simeq \Lambda^g Y$ for $g = (R_{2\pi a\ell/k},\, \sigma_1^b)$ with $an-bk=c$ ---see \eqref{eq:c/nk generator}. By Proposition~\ref{prop:aspherical Y} the fundamental group of $Y=\Fix(K,\Xn)$ is $P_{\nhat,1(c)}$, so that the connected components of $\Lambda^gY$ are in 1--1 correspondence with the $P_{\nhat,1(c)}$-conjugacy classes in the coset $\beta^{-1}(g)\subset\pi_1^{N/K}(Y,x)$, where $N$ is any group containing $K$ and $g$ and acting on $Y$.  It remains to identify a suitable group $N$ so that the commutation is as given in the theorem.

We take the group $N$ as follows. The core $K$ is generated by $(R_{-2\pi\ell(c)},\,\sigma_1^{\nhat})$, and $\sigma_1^{\nhat}$ is a product of $\nhat$ disjoint cycles of length $c$. For $j=1,\dots,\nhat$ let $\pi_j\in S_n$ be the cycle of length $c$
$$\pi_j = (j\;\;(\nhat+j)\;\;(2\nhat+j)\;\dots\;\;((c-1)\nhat+j)),$$
that is, $\pi_j(j+r\nhat) = j+(r+1)\nhat\bmod n$, for $r=0\dots c-1$.  Then as a product of disjoint cycles $\sigma_1^{\nhat} = \pi_1\pi_2\dots\pi_{\nhat}$.
Let $N<N_\Gamma(K)$ be 
$$N=\SO(2)\times(\ZZ_c^\nhat\rtimes S_{\nhat}).$$ 
The action on $Y$ is by $\SO(2)$ acting on the plane as usual, the $j^{\mathrm{th}}$ component of $\ZZ_c^{\nhat}$ acting by powers of the cycle $\pi_j$ (equivalently, on $Y$ by  multiplication by $c^{\mathrm{th}}$ roots of unity as in \eqref{eq:Zcnhat action}), and $S_{\nhat}$ acting by permuting the $\nhat$ disjoint cycles in $\sigma_1^{\nhat}$.  (The permutation part of $N$ is the wreath product $\ZZ_c\wr S_{\nhat}$.)  Clearly $K<N$ and there is therefore an action of $N/K$ on $Y$. Moreover, the subgroup $\ZZ_c^\nhat$ acts freely, and the quotient $Y/\ZZ_c^{\nhat}$ can be identified with $\Xnhat$ via the map $\Psi$ defined in \eqref{eq:Psi}.

Let $\Gammahat=\SO(2)\times S_{\nhat}$ act in the usual way on $\Xnhat$, and let $\psi:N/K\to\Gammahat$ be the surjective homomorphism $\psi(R,\sigma)=(R^c,\sigmahat)$, where $\sigmahat(j)=\sigma(J)\bmod \nhat$. The kernel of $\psi$ is precisely $\ZZ_c^{\nhat}$ and the map $\Psi$ is then equivariant with respect to $\psi$, meaning $\Psi(g\cdot x) = \psi(g)\cdot\Psi(x)$.

Since the $\ZZ_c^\nhat$-action is free, by Proposition~\ref{prop:EFG2} we have an isomorphism 
\begin{equation} \label{eq:EFG for Y}
\pi_1^{N/K}(Y,x) \simeq \pi_1^{\widehat{\Gamma}}(\Xnhat,x) = B_{{\nhat},1}\times_{\alpha}\RR, 
\end{equation}
where $\alpha(1)=\widehat{\Delta}^2$, where $\widehat{\Delta}^2$ is the full twist in $P_{\nhat+1}\subset B_{\nhat,1}$ (the proof of the final equality in \eqref{eq:EFG for Y} is the same as that for Theorem~\ref{thm:EFG for Gamma}, bearing in mind that only $S_{\nhat}$ acts and not $S_{\nhat+1}$, and we are not including the reflection $\kappa$).
Now with $\deltahat\in B_{\nhat,1}$ defined just before the statement of the theorem, we have $\pi(\deltahat) = \sigmahat_1$ so (similar to the case above) 
$$\beta^{-1}(R_{2\pi a\ell/k},\sigma_1^b) = \deltahat^b P_{\nhat,1(c)},$$
as required, where $\beta:\pi_1^{N/K}(Y,x)\to N/K$.

The fundamental group of the component containing $u$ is then deduced in the same way as before.
\end{proof}

\begin{figure}  
\centering
\subfigure[$D(6,4)$]{\includegraphics[scale=0.2]{Pics/D641a_0.eps}}
 \qquad \qquad
\subfigure[$D(10,5/2)$]{\qquad\includegraphics[scale=0.22]{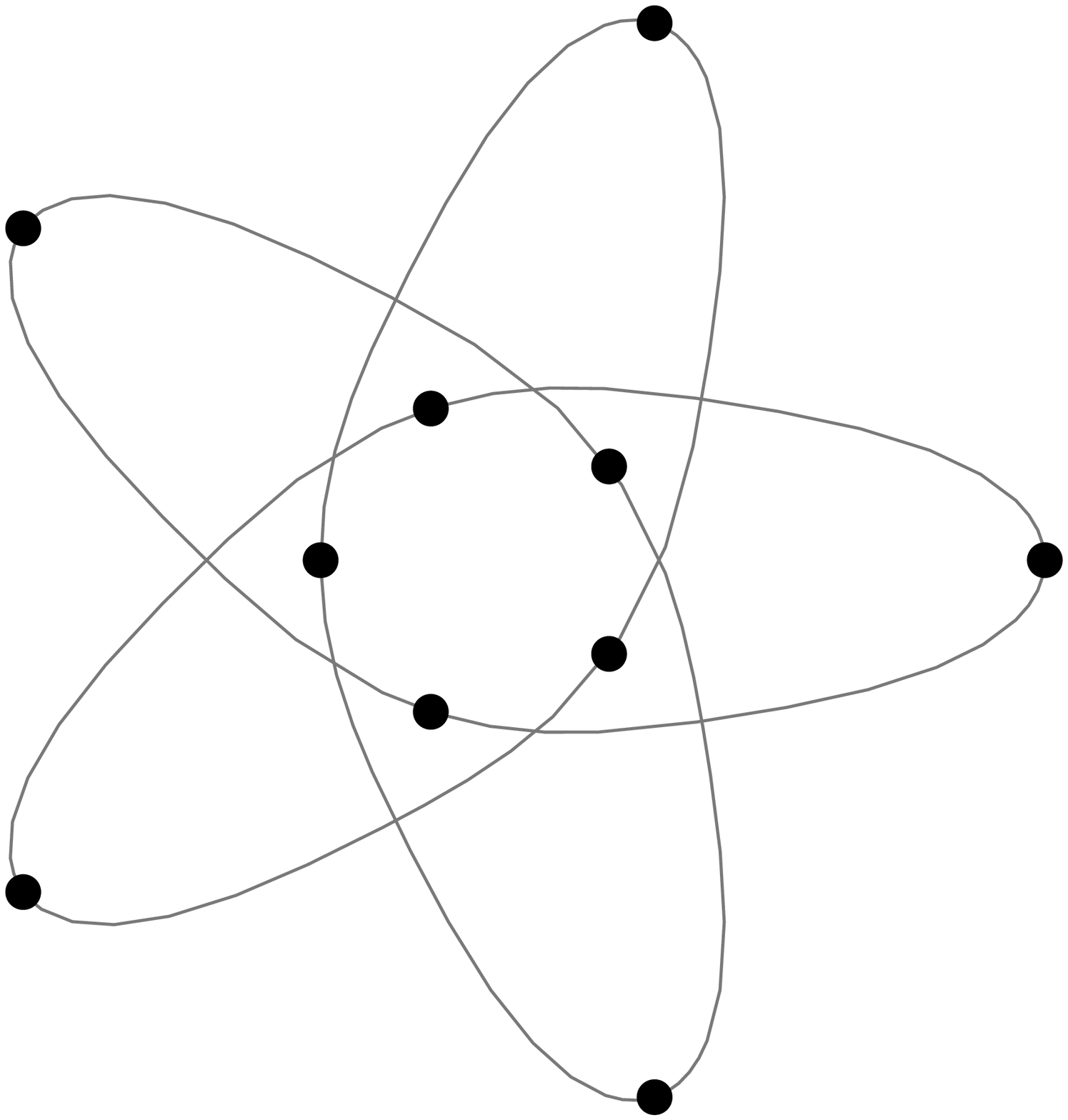}}\caption{Two examples illustrating different connected components of each symmetry type compared with those in Figure~\ref{fig:core}, see also Remark~\ref{rmk:figures}. Many examples of 5-body choreographies with a single time-reversing reflectional symmetry---so type $D(5,1)$ ---in different connected components have been found (numerically) by Sim\'o \cite[ Figure 3]{Simo01a}.}  
\label{fig:core2}
\end{figure}

Choreographies with the same symmetry but in different connected components can often be distinguished by their sequence of winding numbers: since the particles move without collision, in the full periodic orbit each pair of particles winds around each other some integer number of times. For a given choreography, the winding number for particles $i$ and $j$ depends only on the difference $|i-j|$, because of the choreography symmetry. For the two choreographies depicted in Fig.~\ref{fig:core2} the sequence of winding numbers is $(-3,-3,1)$ and $(-3, -3, 2, -3, -3)$ respectively, while for the choreographies of Fig.~\ref{fig:core} they are $(1,-3,1)$ and $(-3, 2, 2, 2, -3)$ respectively. The two with symmetry $D(6,4)$ have different winding numbers and so are not homotopic; similarly for the two $D(10, 5/2)$-choreographies. Here the notation is that the $k^{\mathrm{th}}$ term of the sequence is the winding number of particle $j$ around particle $j+k$. It is not hard to show that, for choreographies of type $C(n,k,\ell)$, the winding numbers are all equivalent to $\ell\bmod k$.

\begin{example}\label{eg:component with circular choreography}
Consider the circular choreography $u$ with speed $\ell=1$ and $n$ particles (Example~\ref{ex:circular motion}). If $z_0=(\omega, \omega^2, \dots, \omega^{n-1}, 1)\in\Xn$, where $\omega=\ee^{2\pi\ii/n}$, then $u(t)=\ee^{2\pi\ii t}z_0$. For this example, it is convenient to use $z_0$ as the base point. The generators of the braid group $\braid_i$ now represent the clockwise exchange of particle $i$ and $i+1$.   The braid $\delta=\braid_1\dots\braid_{n-1}$ becomes the rigid rotation by $2\pi/n$ of the configuration in the clockwise direction, represented by the path $\delta(t) = \ee^{-2\pi\ii t/n}z_0$.  The braid $\Delta^2=\delta^{n}$ is the full twist, given by a clockwise rotation by $2\pi$. 

Let $k$ be coprime to $n$ and let $(g,\tau(g))=(R_{2\pi a/k},\, \sigma_1^b,\,\nfrac{1}{nk})\in G_\tau=D(n,\infty)$, the symmetry group of $u$, where $an-bk=1$. Let $\gamma(t)=u(t/nk)$ for $t\in[0,1]$ (so $\gamma$ is the generator of $u$ ---Definition~\ref{def:generating path}). Then $\gamma\in\Lambda^g\Xn$ so it is natural to ask which connected component it belongs to. The components of $\Lambda^g\Xn$ are indexed by the $P_n$-conjugacy classes of $\delta^b P_n<CB_n$ as described in the theorem above. A calculation involving the isomorphism of Theorem~\ref{thm:EFG for Gamma} shows that the corresponding braid is precisely $\delta^b$. 

Since $b$ is coprime to $n$, it follows from results of \cite{Bessis-Digne-Michel} (see also \cite{G-MW04})  that the centralizer of $\delta^b$ in $P_n$ is the cyclic group generated by $\Delta^2$.  It then follows from Theorem~\ref{thm:connected components of choreographies} that the space of loops with symmetry $C(n,\infty)$ (or symmetry type $D(n,\infty)$) is homotopic to $S^1$, a fact which also follows from the direct argument in Example~\ref{eg:n divides k} below.
\end{example}

\begin{example}\label{eg:C(3,4)}
Consider the choreographies with 3 particles and 4-fold rotational symmetry, so $G=C(3,4)$. Then $G\simeq \ZZ_3\times\ZZ_4\simeq\ZZ_{12}$, which is generated by 
$$(g,\tau(g)) = (R_{3\pi/4},\,\sigma_1^{-1},\,\nfrac1{12})\in\Gamma\times\Sone$$
(see \eqref{eq:coprime generator}, with $an-bk=1$ for $a=b=-1$), and we have
$$\pi_1^G(X^{(3)},x) \simeq CB_3\times_{\alpha}\ZZ(1/4),$$
where $\alpha(1)=\Delta^2$, and the set of connected components is in 1--1 correspondence with the $P_3$-conjugacy classes in the coset $\delta^{-1} P_3\subset B_3$ (in fact in $CB_3$ the cyclic braid group).  

There are two examples of elements of $C(3,4)$ we have seen: the circular choreography with symmetry-type $D(3,\infty)$, see Examples~\ref{ex:circular motion} and~\ref{eg:component with circular choreography} above, and the choreography in Fig.~\ref{fig:D(3,4)} (and note that of course $D(3,4)<C(3,4)$).  The generators and braids of these are illustrated in Fig.~\ref{fig:C(3,4) paths}. The first of these corresponds to the $P_3$-conjugacy class containing $\delta^{-1}=\braid_2^{-1}\braid_1^{-1}$, as shown in the example above, while the second corresponds to the one containing $\br_2^2\delta^{-1}=\br_2\br_1^{-1}$.

It was shown in the previous example that the $P_3$-centralizer of $\delta=\br_1\br_2$ is the subgroup of $P_3$ generated by $\Delta^2$, so that the set of loops with symmetry $C(3,\infty)$ has the homotopy type of a circle (as was shown explcitly in Example~\ref{eg:component with circular choreography}.  On the other hand, 
the element $\br_2\br_1^{-1}$ is a pseudo-Anosov braid (by exclusion: it is neither periodic nor reducible \cite{G-M03}, or see \cite[Sec~15.1]{FarbMargalit}), and it follows from work of Gonz\'alez-Meneses and Weist \cite{G-MW04} that its $P_3$ centralizer is isomorphic to $\ZZ^2$ (in fact generated by $(\br_2\br_1^{-1})^3$ and $\Delta^2$).  Assuming the functional is coercive (which occurs for example for the strong force, see Sec.~\ref{sec:variational}), it follows by using Morse theoretic arguments, that it must have at least two $\Sone$-orbits of critical points on this component, for  otherwise it would be homotopic to a circle. 
\end{example}

\begin{figure}
\centering
\begin{pspicture}(-6,-2.2)(6,2)
\psset{unit=1.6} 
\rput(-2,0){%
  \psarc{->}(0,0){1}{0}{30}   \psarc[linestyle=dashed]{->}(0,0){1}{30}{115} 
  \rput(1.1,0.3){$\gamma$}   \rput(0.8,1){$(-\theta)_{g\cdot x}$}
  \psarc{->}(0,0){1}{120}{150} \psarc[linestyle=dashed]{->}(0,0){1}{150}{235} 
  \psarc{->}(0,0){1}{240}{270} \psarc[linestyle=dashed]{->}(0,0){1}{270}{355} 
  \psdots(1,0)(-.5, .866)(-.5, -.866)
  \rput(1.2,0){3}\rput(-0.6,-1){2}\rput(-0.6,1){1}
  \psdots[dotstyle=o](0.866,0.5)(-0.866,0.5)(0,-1)
}
\rput(2,0){%
  \psdots(1,0)(-.5, .866)(-.5, -.866)
  \rput(-0.6,1){1}  
  \psarc{->}(0,0){1}{120}{150} 
  \psarc[linestyle=dashed]{->}(0,0){1}{150}{235} 
  \rput(-0.6,-1){2} 
  \psecurve[showpoints=false](-0.5,-0.866)(-0.5,-0.866)(0,-0.3)(0.25,-0.15)(0.5,-0.25)(0.55,-0.4)(0.5,-0.5)  
  \psecurve[showpoints=false]{->}(0.5,-0.5)(0.45,-0.6)(0.25,-0.8)(0.05,-0.95)(0,-1) 
  \psecurve[showpoints=false](0.17,-0.17)(0.15,-0.25)(0.25,-0.5)(0.4,-0.55)(0.5,-0.5)(0.57,-0.45)(0.8,-0.25)(1,0)(1,0) 
  \psecurve[showpoints=false]{<-}(0.866,0.5)(0.85,0.45)(0.3,0)(0.2,-0.1)(0.17,-0.17) 
  \rput(1.2,0){3} 
  \psarc[linestyle=dashed]{->}(0,0){1}{270}{355} 
  \psarc[linestyle=dashed]{->}(0,0){1}{30}{115} 
  \rput(0.5,0.3){$\gamma$} \rput(0.8,1){$(-\theta)_{g\cdot x}$}
  \cput(1,-1){$\br_2^2$}
  \psdots[dotstyle=o](0.866,0.5)(-0.866,0.5)(0,-1)
}
\end{pspicture}
\caption{Two braids arising from loops with $D(3,4)$ symmetry. The first is the circular choreography, the second the one depicted in Fig~\ref{fig:D(3,4)}.  The black dots represent the base point $z_0\in X^{(3)}$,  and the open dots the point $\gamma(1)=g\cdot z_0$, where $g=(R_{-\pi/2},\,\sigma_1^{-1})$, and $\sigma_1^{-1}=(1\;3\;2)$. The solid curves represent $\gamma$ and the dashed ones $(-\theta)_{g\cdot x}$. 
(A break in the solid path in the second figure represents the one traversed later: in other words this is a top-down view of the motion, with time increasing downwards and the `hidden' curve broken as usual.)
} 
\label{fig:C(3,4) paths}
\end{figure}
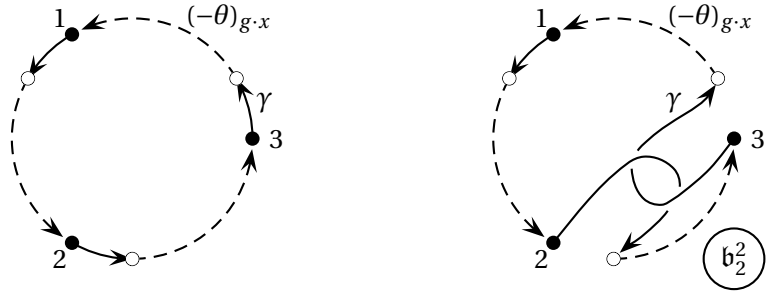

Finally, we consider symmetry groups where the number of particles divides the order of symmetry of the curve, and show directly that such loops are always homotopic to the circular choreography, and indeed in the case of the Newtonian potential the only choreographic solution is the circular one. 

\begin{example}\label{eg:n divides k}
Suppose $G=C(n,k/\ell)$ with $k$ a multiple of $n$.  Then $c=(n,k)=n$, and the fixed point space $\Fix(K,\Xn)\simeq X^{(1)}_*$, the punctured plane (see Lemma~\ref{lemma:avoiding origin}).  Indeed, the position of the first particle determines the others as they lie at the vertices of a regular $n$-gon centred at the origin.  The motion will preserve this property and so the solutions will be `homothetic'.  The fundamental group of this fixed point space is $\ZZ$, and the equivariant fundamental group $\pi_1^G(\CC^*,x) \simeq \ZZ\times \ZZ_k$, with $\beta(p,q)=(p\bmod n,\,q)\in G=\ZZ_n\times\ZZ_k$. The conjugacy is trivial, so there are countably many connected components (parametrized by $\pi_1(\CC^*)=\ZZ$), and each connected component is homotopic to a circle.  

More explicitly, let $u(t)=(z_1(t),\dots,z_n(t))$ be a choreography with symmetry $C(n,k/\ell)$ with $k=an$. The symmetry implies in particular that for all $t$, $z_i(t)/z_j(t)$ is never a positive real number. Define a new choreography $v$ homotopic to $u$ within the set of loops with symmetry $C(n,k/\ell)$ by projecting each point radially to the unit circle:
$$v(t) = \left(\frac{z_1(t)}{|z_1(t)|},\dots, \frac{z_n(t)}{|z_n(t)|}\right).$$
This choreography involves all the points moving equally spaced around the unit circle, and so in fact has symmetry $D(n,\infty/\ell)$. It follows that every component of $C(n,k/\ell)$ with $n|k$ contains a loop with symmetry $D(n,\infty/\ell)$. Furthermore, each of the $\ZZ$'s worth of connected components of $C(n,k/\ell)$ mentioned above contains the multiple coverings of the basic one. 

As far as the dynamics is concerned, the potential function on $\Xn$ restricts to a similar potential function on this fixed point space.  In particular, if the potential is homogeneous of degree $d$, then the restriction is also homogeneous of degree $d$, and the resulting dynamics coincides with that for a single mass moving in the corresponding central force.  For example, if the system is Newtonian ($d=-1$) the restricted dynamics will involve the vertices of the regular $n$-gon moving in ellipses, parabolae or hyperbolae.  However, if the motion is not circular then distinct vertices will move on distinct curves, so the only choreographic solution will be the circular motion.  Similar results should be available for other homogeneous potentials.
\end{example}

In the next section, we extend this example by asking which components of $C(n,k)$ and $C(n,k/\ell)$ do not contain loops with greater symmetry.

\section{Adjacencies and components}
\label{sec:adjacencies}

There is a familiar question in the application of topological methods to the calculus of variations which asks whether a given loop is homotopic to a loop which is multiply covered. If it is not, then existence theorems (such as existence of geodesics) guarantee there is a solution in the given homotopy class (connected component of loop space) which is not just a multiple covering of a simpler solution, so guaranteeing a `new' solution.  The answer is simply that a loop is not homotopic to a multiply covered loop if and only if the corresponding element of the fundamental group is \emph{primitive}.  An element of a group is said to be primitive if it cannot be written as a positive power of another element; this property is invariant under conjugacy.

From the symmetry perspective, even if there is no symmetry group on the configuration space $X$, there is always an $\Sone$-action on the loop space $\Lambda X$ (we don't discuss the $\Sonehat$ action in this section). A trivial loop is one with isotropy precisely $\Sone$. A loop has symmetry $\ZZ_r<\Sone$ if and only if it is $r$-times covered. So, if a loop $u$ has symmetry $\ZZ_r<\Sone$ then the curve $v:t\mapsto u(t/r)$ for $t\in[0,1]$, is also a loop, and the homotopy classes satisfy $[u]=[v]^r$, explaining what was said above. In particular, $u$ is freely homotopic to a multiply covered curve (i.e., one with greater symmetry) if and only if $[u]\in\pi_1(X,x)$ is not a primitive element.

The natural extension of this question to the symmetric setting is to ask whether a particular connected component of the space of loops with symmetry $G$ contains loops with greater symmetry (of which multiple coverings are a particular case). We saw an instance of this in Example~\ref{eg:n divides k} where $G=C(n,k/\ell)$ with $n\divides k$: all such components contain loops with symmetry $D(n,\infty/\ell)$. 

In the context of choreographies, we saw in Section~\ref{sec:lattice} that $C(n,k/\ell) \prec C(n,k',\ell')$ iff $k\divides k'$ and $\ell\equiv\ell'\bmod k$.  Some components of the loops with symmetry (at least) $C(n,k/\ell)$ contain loops with symmetry $C(n,k',\ell')$ and some do not.  Similarly, if $n$ is odd, $C(n,1)\prec C'(n,2)$ and the same question arises.  In this section we develop a method to determine which components do contains loops with greater symmetry and which do not, in terms of conjugacy classes in the equivariant fundamental group $\pi_1^\Gamma(\Xn,x)$. 
If the action of $\Gamma$ on $X$ is free, this is a simple extension of the classical result above, the requirement being that the relevant element $(\gamma,g)$ be a primitive element of $\pi_1^G$---see Corollary~\ref{coroll:primitives} below. 

\subsection{Maximal symmetry type}\label{sec:max symmetry type}

Returning to the general context of actions of a group $\Gamma$ on a connected manifold $X$, suppose a loop $u$ has symmetry equal to $G<\Gamma\times\Sone$, with $\tau(G)=\ZZ_r<\Sone$ and suppose $g\in G$ is such that $\tau(g)=\nfrac1r$. Such a $g$ is uniquely defined modulo $K=\ker\tau\lhd G$ and moreover, since $u$ has isotropy $G$, $g\in N=N_\Gamma(K)$, the normalizer of $K$ in $\Gamma$.  As usual define in this case $\gamma(t)=u(t/r)$ for $t\in[0,1]$, the generator  of $u$, so $u=\phi_r(\gamma)$ in the notation of \eqref{eq:defn of phi_r}. Then $\gamma\in\Lambda^gX^K$.

Let $C(u)$ denote the connected component of $\Fix(G,\Lambda X)$ containing $u$, or what is equivalent by Proposition~\ref{prop:fix is Lambda^g}, the connected component of $\Lambda^gX^K$ containing $\gamma$.  The question is whether $C(u)$ contains any points with symmetry group strictly containing $G$.  We say the component $C(u)$ is of \defn{maximal symmetry type} if there are no loops in $C(u)$ with symmetry strictly greater than $G$.

\begin{theorem}
Suppose the group $\Gamma$ acts on a manifold $X$, and let $u$ be a loop in $X$ with isotropy $G< \Gamma\times\Sone$ with $K=\ker\tau$ trivial,  and let $\gamma\in\Lambda^gX$ be a generator of $u$  The connected component $C(u)$ of\/ $\Fix(G,\Lambda X)$ containing $u$ is of maximal symmetry type if and only if both the following conditions hold:
\begin{enumerate}
\item  $(\gamma,g)\in\pi_1^\Gamma(X,x)$ is a primitive element, and
\item there does not exist a non-trivial isotropy subgroup $K$ of the $\Gamma$-action on $X$ for which  $(\gamma,g)\in \pi_1^{N}(X^{K},x')$, where $N=N_\Gamma(K)$ and $x'\in X^K$.
\end{enumerate}
More generally, the same is true if $u\in \Lambda Y$ for $Y=X^{K_0}$ with $K_0=\ker\tau\lhd G$ non-trivial, provided $\Gamma$ is replaced by $\Gamma_0:=N_\Gamma(K_0)/K_0$, $X$ is replaced by $Y$ and for condition (2), $K$ strictly contains $K_0$, $x$ and $x'$ should be in the same connected component of $X^{K_0}$ and $N$ is replaced by 
$$N' = (N_\Gamma(K_0)\cap N_\Gamma(K))/K_0.$$
\end{theorem}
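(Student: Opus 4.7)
The plan is to show that $C(u)$ contains a loop with strictly greater symmetry exactly when one of the two listed conditions fails. The first step will be to observe that any $v\in C(u)$ with isotropy $G'\supsetneq G$ yields an extension which is either \emph{temporal} (meaning $\tau(G')\supsetneq \tau(G)=\ZZ_r$) or \emph{spatial} (meaning $\ker\tau|_{G'}\supsetneq K=\ker\tau|_G=\trivial$), or both. These can be handled independently because, if $\ker\tau|_{G'}=K$, then the preimage of $\ZZ_r$ under $\tau|_{G'}$ is exactly $G$, so in that case the only enlargement in $G'$ is temporal.

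For the temporal extension, I would let $\tau(G')=\ZZ_{rs}$ with $s\ge 2$, pick $h\in G'$ with $\tau(h)=1/(rs)$, and set $\delta(t)=v(t/(rs))$, so that $\delta\in\Lambda^{h}X$ and $v=\phi_{rs}(\delta)$. Iterating Eq.~\eqref{eq:r-th power} gives $\phi_{rs}=\phi_r\circ\phi_s$ on relative loop spaces, so the generator of $v$ viewed as a $G$-symmetric loop is $\gamma'=\phi_s(\delta)$, which in $\pi_1^\Gamma(X,x)$ equals $(\delta,h)^s$. Since $v\in C(u)$, Theorem~\ref{thm:connected components}(1) forces $(\gamma,g)$ to be $\pi_1(X,x)$-conjugate to $(\delta,h)^s$; primitivity being conjugation-invariant, $(\gamma,g)$ is itself a non-trivial power. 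Conversely, a decomposition $(\gamma,g)=(\delta,h)^s$ would let me apply $\phi_r$ to a relative homotopy from $\gamma$ to $\phi_s(\delta)$, yielding a path in $\Fix(G,\LX)$ from $u$ to $\phi_{rs}(\delta)$, the latter having symmetry generated by $(h,1/(rs))$.

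For the spatial extension, if $v$ has pointwise isotropy containing a non-trivial $K<\Gamma$, then $v$ is valued in $X^K$ and its generator $\gamma'$ is a $g'$-path in $X^K$. The element $g'$ must lie in $N=N_\Gamma(K)$, because $g'Kg'^{-1}\subset\ker\tau|_{G'}\cap G'=K$ (conjugation preserves the $\tau$-image, and $G'$ is a group). Hence $(\gamma',g')$ belongs to $\pi_1^{N}(X^K,x')$ for $x'=\gamma'(0)$; after a path from $x$ to $x'$ is used to identify base points, its image in $\pi_1^\Gamma(X,x)$ is $\pi_1(X,x)$-conjugate to $(\gamma,g)$, which is the sense in which condition~(2) fails. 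Conversely, given any representative of the conjugacy class of $(\gamma,g)$ by a $g$-path in $X^K$, the loop $\phi_r$ of that path is $G$-symmetric, lies in $C(u)$ by Proposition~\ref{prop:fix is Lambda^g}, and has isotropy containing $K$.

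The generalization to non-trivial core $K_0=\ker\tau|_G$ will proceed by the same argument applied to the restricted action of $\Gamma_0=N_\Gamma(K_0)/K_0$ on $Y=X^{K_0}$: enlarged isotropy subgroups $K$ must strictly contain $K_0$, and the normalizer acting on $Y^{K/K_0}=X^K$ is $N'=(N_\Gamma(K_0)\cap N_\Gamma(K))/K_0$, as claimed. The main obstacle will be the base-point bookkeeping in condition~(2): giving a precise meaning to ``$(\gamma,g)\in\pi_1^N(X^K,x')$'' despite $x'\neq x$, and checking that this realization is well-defined up to the $\pi_1(X,x)$-conjugation used in Theorem~\ref{thm:connected components}(1). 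Once that is settled and $\phi_r\circ\phi_s=\phi_{rs}$ is confirmed, the two equivalences follow directly from Theorem~\ref{thm:connected components} and Proposition~\ref{prop:fix is Lambda^g}.
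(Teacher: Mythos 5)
Your plan is correct and follows essentially the same route as the paper's proof: the same dichotomy between temporal enlargement (handled via primitivity of $(\gamma,g)$ and the relation $\phi_{rs}=\phi_r\circ\phi_s$ together with Eq.~\eqref{eq:r-th power}) and spatial enlargement (a non-trivial kernel of $\tau$ forcing the loop into some $X^K$ and hence violating condition~(2)), with the non-trivial-core case reduced to the $\Gamma_0$-action on $Y=X^{K_0}$. If anything you are more explicit than the paper about the $\pi_1(X,x)$-conjugacy and base-point identifications that the published proof leaves implicit.
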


Condition (2) implicitly uses the identification of $\pi_1^{N}(X^K,x')$ with its image in $\pi_1^\Gamma(X,x)$ (since $X^{K}\subset X$ and $N<\Gamma$). More generally it uses the identification of $\pi_1^{N'}(X^{K},x)$ with its image in $\pi_1^{\Gamma_0}(X^{K_0},x)$.  Note that this condition (2) is trivially satisfied if $K$ is a maximal isotropy subgroup of the $\Gamma$-action on $X$.

In the more general version of condition (1) the group $\pi_1^{\Gamma_0}(Y,\,x)$ arises from the natural action of $N_\Gamma(K_0)/K_0$ on $Y=X^{K_0}$, and is isomorphic to the quotient
$$\trivial \lra K_0 \lra \pi_1^{N_\Gamma(K_0)}(Y,\,x) \lra \pi_1^{\Gamma_0}(Y,\,x) \lra \trivial$$
where the first inclusion is $k\mapsto(\bar x, k)$, where $\bar x$ is the constant loop at $x$. 

\begin{proof}
We prove the `basic' version; the more general version follows by considering the $\Gamma_0$-action on $X^{K_0}$.

First we prove the `only if' statement. Suppose firstly that $(\gamma,g)\in\pi_1^\Gamma(X,x)$ is not primitive, so there is a $(\delta,h)\in\pi_1^\Gamma(X,x)$ and $p>1$ such that $(\gamma,g)=(\delta,h)^p$.  Let $v$ be the loop corresponding to $\delta$. Then $v$ is homotopic to $u$ in $\Fix(G,\Lambda X)$ so $C(u)$ is not maximal. Secondly suppose condition (2) is violated.  Since $X$ is connected, the groups $\pi_1^\Gamma(X,x)$ and $\pi_1^\Gamma(X,x')$ are isomorphic, so let $(\gamma',g)$ be the corresponding element in the latter group (same $g$).   Then   there is a $(\delta,h)\in\pi_1^N(X^K,x')$ such that $(\gamma',g)=(\delta,h)$ ($h$ is only defined modulo $K$, but we can choose $h=g$). It follows that $u$ and $v$ are homotopic.

For the converse, suppose that the component $C(u)$ is not of maximal symmetry type, and let $v\in C(u)$ be a loop with greater symmetry. That is, $v\in\Fix(H,\Lambda X)\cap C(u)$ for some group $H<\Gamma\times\Sone$ with $H\gneqq G$.  Let $\tau':H\to\Sone$ be the projection, and let $K$ be the kernel.  If $K$ is non-trivial, then condition (2) fails.   

On the other hand, if $K$ is trivial, then the image of $\tau'$ is a subgroup of $\Sone$ strictly containing $\tau(G)$, so has order $pr$ for some integer $p>1$.  Let $h$ satisfy $\tau'(h)=\nfrac{1}{pr}$ and define  $\delta(t)=v(t/pr)$ for $t\in[0,1]$. Then $\delta\in\Lambda^hX$, and $(\gamma,g)=(\delta,h)^p$ so that $(\gamma,h)$ is not primitive.
\end{proof}


As mentioned above, if the action of $\Gamma$ on $X$ is free the theorem simplifies considerably:

\begin{corollary} \label{coroll:primitives}
Suppose the action of\/ $\Gamma$ on $X$ is free and let $u$ be a loop with symmetry  $G<\Gamma\times S^1$.  The connected component $C(u)$ of\/ $\Fix(G, \Lambda X)$ is of maximal symmetry type if and only if the corresponding element $(\gamma,g)\in\pi_1^{\Gamma}(X,x)$ is primitive, where $\gamma$ is the generator of\/ $u$.
\end{corollary}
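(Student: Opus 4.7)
The plan is to reduce to the preceding theorem and observe that freeness of the action eliminates all of the ``fixed-point'' contributions. Concretely, I would argue as follows.

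First I would verify that under the hypothesis that $\Gamma$ acts freely on $X$, the kernel $K_0 = \ker\tau$ of the homomorphism $\tau : G \to \Sone$ associated with $u$ must be trivial. For if $k \in K_0$, then $k$ fixes $u(t) \in X$ for every $t$, since $(k, \tau(k)) = (k, 0)$ acts as $k$ on the image of $u$. Freeness of the $\Gamma$-action then forces $k$ to be the identity. Hence $K_0 = \trivial$, so in the setup of the preceding theorem we have $\Gamma_0 = \Gamma$, $Y = X^{K_0} = X$, and $\gamma \in \Lambda^g X$ is the generator of $u$ in the original sense.

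Next, I would observe that condition (2) of the preceding theorem is vacuous. Indeed, condition (2) asks for the non-existence of a non-trivial isotropy subgroup $K$ of the $\Gamma$-action on $X$ with $\gamma \in \pi_1^{N_\Gamma(K)}(X^K, x')$ (for appropriate $x'$). But freeness of the action means that every non-trivial $K < \Gamma$ has $X^K = \emptyset$, so no such $K$ can exist. Hence condition (2) is automatically satisfied, and the preceding theorem collapses to the single criterion that $(\gamma, g) \in \pi_1^\Gamma(X, x)$ be primitive.

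Finally, I would note the converse direction comes for free from the ``only if'' part of the preceding theorem: if $(\gamma, g)$ is not primitive, writing $(\gamma, g) = (\delta, h)^p$ with $p > 1$ produces a loop $v$ in $C(u)$ whose symmetry group strictly contains $G$ (the new $\tau$-image is $\ZZ_{pr} \supsetneq \ZZ_r$). Conversely, if $C(u)$ contains a loop $v$ with symmetry strictly larger than $G$, then because the kernel of the new $\tau$ is again trivial by freeness, the only way for the symmetry to increase is via enlargement of the image in $\Sone$, which yields a non-trivial $p$-th root decomposition of $(\gamma, g)$ in $\pi_1^\Gamma(X, x)$, contradicting primitivity.

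The main point, and really the only thing requiring care, is the initial reduction showing $K_0$ is trivial and recognising that freeness also kills condition (2); once those are in place the corollary is a direct specialisation of the preceding theorem, with no braid- or choreography-specific ingredients needed.
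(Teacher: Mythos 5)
Your proposal is correct and is essentially the argument the paper intends (the paper gives no separate proof, simply noting that the theorem ``simplifies considerably'' when the action is free): freeness forces $\ker\tau$ to be trivial and makes condition (2) vacuous since no non-trivial isotropy subgroups exist, leaving primitivity as the sole criterion.
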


\subsection{Application to choreographies}

Since we know from Theorem~\ref{thm:classification} all the isotropy subgroups of $\Gamma\times\Sone$ appearing for choreographies we can simplify the application of the theorem above. There are two types of case to consider.  The first, more direct and only for odd $n$, is that $C(n,1)=\Chor_n\prec C'(n,2)$ and the other, which involves many sub-cases, is that $C(n,k/\ell)\prec C(n,k'/\ell')$ if and only if $k\divides k'$ and $\ell'\equiv\pm\ell\bmod k$.

Suppose first that $n$ is odd and $u$ has symmetry $C(n,1)$. 
By Theorem~\ref{thm:connected components of choreographies}, the set of connected components of $C(n,1)$ is in 1--1 correspondence with the $P_n$-conjugacy classes in the coset $\delta^{-1} P_n\subset CB_n$ (the cyclic braids are defined in Eq.~\eqref{eq:CB_n}).  The connected component $C(u)$ containing $u$ therefore corresponds to the $P_n$-conjugacy class containing $\delta^{-1} p_u$ for some $p_u\in P_n$ 

Suppose $v\in C(u)$ has symmetry $C'(n,2)$ and let $\eta(t)=v(t/2n)$, so that $\eta\in\Lambda^{(\kappa,\sigma_2)}\Xn$ (see Sec.~\ref{sec:notation} for notation).  Since there is no rotational part to this symmetry group, the relevant equivariant fundamental group is
$$\pi_1^{\ZZ_2\times \Sigma_n}(\Xn,x) \simeq CB_n\rtimes\ZZ_2,$$
with product $(\br_1,\kappa)(\br_2,g) = (\br_1\bar \br_2,\, \kappa g)$ for $g\in\{I,\kappa\}$, and $\bar\br$ is the braid obtained from $\br$ by changing all overcrossings to undercrossings and vice versa (see Theorem~\ref{thm:EFG for Gamma}).  The map $\phi_2:\Lambda^{(\kappa,\sigma_2)}\Xn\to \Lambda^\chor\Xn$ is 
$$(\gamma,\,(\kappa,\sigma_2)) \longmapsto (\gamma*\bar\gamma,\,\chor).$$
Furthermore, connected components correspond to twisted conjugacy classes, and indeed under this map, for arbitrary $r\in P_n$,
$$r*\gamma*\bar r^{-1} \longmapsto (r*\gamma*\bar r^{-1})*(\bar r*\bar\gamma* r^{-1} ) = r*(\gamma*\bar\gamma)*r^{-1},$$
up to homotopy, so mapping connected components to connected components (as it must!).  Moreover, with $(\gamma,(\kappa,\sigma_2))$ corresponding to $(\br_0,\kappa)\in B_n\rtimes\ZZ_2$ we have $\br_0 = \delta'p$ for some pure braid $p$ (see Theorem \ref{thm:connected components of choreographies}). The corresponding braid under $\phi_2$ is $\br_0\bar\br_0 = \delta' p \overline{\delta'}\bar p = \delta'\overline{\delta'}p'\bar p = \delta p''$, as  required for braids with $C(n,1)$ symmetry.
Thus we have, for odd $n$,

\begin{proposition}
A loop $u$ with symmetry $C(n,1)$ is homotopic to a loop with symmetry $C'(n,2)$ if and only if the corresponding element $\br$ of the braid group can be written as $\br = \br_0\overline{\br_0}$.
\end{proposition}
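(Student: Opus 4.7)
My plan is to prove the two implications separately, exploiting the classification of connected components from Theorem~\ref{thm:connected components of choreographies} together with the algebraic identity relating the map $\phi_2$ to $P_n$-conjugation.

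For the forward direction ($\Rightarrow$), suppose $u$ is homotopic in $\Fix(\Chor_n,\Lambda\Xn)$ to a loop $v$ with $C'(n,2)$-symmetry, and let $\br_0$ be the braid of the $C'(n,2)$-generator of $v$, so the $\Chor_n$-braid of $v$ equals $\br_0\overline{\br_0}$, as computed in the paragraph preceding the proposition. By Theorem~\ref{thm:connected components of choreographies}(1) applied to $C(n,1)=\Chor_n$, the braids of $u$ and $v$ are $P_n$-conjugate, so $\br=p\,\br_0\overline{\br_0}\,p^{-1}$ for some $p\in P_n$. Since the bar operation is a homomorphism of $B_n$ and is an involution, $\overline{p\br_0\bar p^{-1}}=\bar p\,\overline{\br_0}\,p^{-1}$, and hence
\[
 p\,\br_0\overline{\br_0}\,p^{-1}
  =(p\br_0\bar p^{-1})(\bar p\,\overline{\br_0}\,p^{-1})
  =(p\br_0\bar p^{-1})\,\overline{(p\br_0\bar p^{-1})}.
\]
Setting $\br_0':=p\br_0\bar p^{-1}$ then gives $\br=\br_0'\,\overline{\br_0'}$, as required.

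For the converse, assume $\br=\br_0\overline{\br_0}$. I would first check that $\br_0$ automatically lies in the coset of $B_n$ needed for a $C'(n,2)$-generator. Applying $\pi:B_n\to S_n$ and using both $\pi(\overline{\br_0})=\pi(\br_0)$ and $\pi(\br)=\sigma_1^{-1}$, one has $\pi(\br_0)^2=\sigma_1^{-1}$; in particular $\sigma_1$ is a power of $\pi(\br_0)$, which forces $\pi(\br_0)$ into the centralizer of the $n$-cycle $\sigma_1$, i.e.\ into $\Sigma_n$. Since $n$ is odd, the unique $j\in\ZZ/n\ZZ$ with $\sigma_1^{2j}=\sigma_1^{-1}$ is $j=(n-1)/2$, so $\pi(\br_0)=\sigma_1^{(n-1)/2}=\sigma_2^{-1}$, placing $\br_0$ precisely in the coset indexing components of $\Fix(C'(n,2),\Lambda\Xn)$.

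Then, applying Theorem~\ref{thm:connected components of choreographies} to $C'(n,2)$, the twisted-conjugacy class of $\br_0$ in that coset corresponds to a nonempty connected component of $\Fix(C'(n,2),\Lambda\Xn)$, so there exists a loop $v$ there whose $C'(n,2)$-generator-braid has the form $\br_0''=p\br_0\bar p^{-1}$ for some $p\in P_n$. Its $\Chor_n$-braid is
\[
 \br_0''\,\overline{\br_0''}
  =(p\br_0\bar p^{-1})(\bar p\,\overline{\br_0}\,p^{-1})
  =p\,\br_0\overline{\br_0}\,p^{-1}=p\br p^{-1},
\]
which is $P_n$-conjugate to $\br$; so $u$ and $v$ lie in the same component of $\Fix(\Chor_n,\Lambda\Xn)$, giving the desired homotopy. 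The main subtlety is exactly this compatibility: the twisted $P_n$-conjugation classifying $C'(n,2)$-components and the ordinary $P_n$-conjugation classifying $\Chor_n$-components must agree under $\br_0\mapsto\br_0\overline{\br_0}$, and the displayed identity is precisely what reconciles them.
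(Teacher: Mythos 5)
Your proof is correct and follows essentially the same route as the paper: both rest on the identity $(p\br_0\bar p^{-1})\,\overline{(p\br_0\bar p^{-1})}=p\,\br_0\overline{\br_0}\,p^{-1}$, which shows that the map $\br_0\mapsto\br_0\overline{\br_0}$ intertwines the twisted $P_n$-conjugacy classifying $C'(n,2)$-components with the ordinary $P_n$-conjugacy classifying $\Chor_n$-components. You spell out both implications (and the determination of $\pi(\br_0)$ as the unique square root of $\pi(\br)$ in $\Sigma_n$) a little more explicitly than the paper does, but the underlying argument is the same; the only discrepancy is a sign convention for the $\Chor_n$-coset ($\sigma_1$ versus $\sigma_1^{-1}$) that is already present in the paper itself.
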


Note that $\pi(\br)=\pi(\br_0)^2=\sigma_1$ which implies that $\pi(\br_0)=\sigma_2$ ($\sigma_2$ is the unique square root of $\sigma_1$ in $S_n$), so any $\br_0$ satisfying $\br=\br_0\overline{\br_0}$ corresponds to a loop with symmetry $C'(n,2)$.  

On the other hand, as pointed out earlier, the exponent sum satisfies $\chi(\bar\br)=-\chi(\br)$. It follows that $\chi(\br\bar\br)=0$, and consequently the braid associated to any component of the set of choreographies containing loops with $C'(n,2)$ symmetry must have zero exponent sum.

An open question arising here is whether, for $g'$ the generator of $C'(n,2)$, the map
$$\pi_0(\Lambda^{g'} \Xn) \lra \pi_0(\Lambda^\chor\Xn)$$
is injective. Failure of this would amount to the existence of two braids $\br,\br' \in \delta'P_n$ in distinct twisted conjugacy classes, for which $\br\overline{\br}$ is conjugate to $\br'\overline{\br'}$.  This would have implications for the number of critical points of the action functional in a connected component of $C(n,1)$, as there would be at least two critical points in that component with symmetry (conjugate to) $C'(n,2)$. 

\medskip

Now consider the case $C(n,1)\prec C(n,k/\ell)$, with $(n,k)=1$. Since there are no fixed points for the $\Gamma$-action, we are in the setting of Corollary~\ref{coroll:primitives}. Let $g=(R_{2\pi a\ell/k},\sigma_1^b)$, and $\delta\in\Lambda^g\Xn$, where as usual $an-bk=1$. Then the map $\phi_k:\Lambda^{g}\Xn\to\Lambda^\chor\Xn$, as in Eq.~\eqref{eq:defn of phi_r}, and at the level of homotopy,
$$\phi_k(\eta,g)=(\eta,g)^k = (\gamma,\chor),$$
with $\gamma=\phi_k(\eta)$. Representing $(\eta,g)\in\pi_1^\Gamma(\Xn,x)$ as $[\br_0,a\ell/k]\in B_n\times_\alpha\RR$ and $(\gamma,\chor)$ as $[\br,0]$ we require $[\br_0,a\ell/k]^k=[\br,0]$.  This becomes
$$[\br_0^k,a\ell]=[\br,0]$$
so that $\br = \Delta^{-2a\ell}\br_0^k$. This implies the following.

\begin{proposition}
A loop $u$ with symmetry $C(n,1)$ and corresponding braid $\br\in\delta^{-1} P_n$ is homotopic to a loop with symmetry $C(n,k/\ell)$ with $(n,k)=1$ if and only if  $\Delta^{2a\ell}\br$ has a $k^\mathrm{th}$ root, where $an\equiv 1\bmod k$.
\end{proposition}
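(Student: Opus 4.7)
The plan is to translate the topological question into algebra using the identifications $\Fix(C(n,k/\ell),\Lambda\Xn)\simeq\Lambda^g\Xn$ from Proposition~\ref{prop:fix is Lambda^g} and $\pi_1^{C(n,k/\ell)}(\Xn,x)\simeq CB_n\times_\alpha\ZZ(1/k)$ from Corollary~\ref{coroll:EFG for trivial core}. Once the $k$-th power map in this semidirect product is spelled out explicitly, the equivalence collapses to a statement about the existence of a $k$-th root of $\Delta^{2a\ell}\br$ in the braid group.

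First I would fix $g=(R_{2\pi a\ell/k},\sigma_1^b)$ with $an-bk=1$ (so $an\equiv 1\bmod k$ and $bk\equiv -1\bmod n$), and note that every loop $v$ with symmetry $C(n,k/\ell)$ is of the form $v=\phi_k(\eta)$ for a unique $g$-loop $\eta$ corresponding to some $[\br_0,a\ell/k]\in CB_n\times_\alpha\ZZ(1/k)$ with $\br_0\in\delta^b P_n$. By Eq.~\eqref{eq:r-th power} the class of $\phi_k(\eta)$ is the $k$-th power, so
\[
 [\br_0,a\ell/k]^k=[\br_0^k,a\ell]=[\Delta^{-2a\ell}\br_0^k,0],
\]
using $\alpha(1)=\Delta^2$; this is exactly the computation already carried out in the excerpt immediately preceding the proposition.

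For the forward direction, if $u$ and such a $v$ are homotopic in $\Fix(C(n,1),\Lambda\Xn)\simeq\Lambda^\chor\Xn$, then by Theorem~\ref{thm:connected components}(1) the braids $\br$ and $\Delta^{-2a\ell}\br_0^k$ are $P_n$-conjugate inside $\delta^{-1}P_n$. Since $\Delta^{2a\ell}$ lies in the centre of $B_n$, this rearranges to say that $\Delta^{2a\ell}\br$ is $P_n$-conjugate to $\br_0^k$, and hence admits the $k$-th root $p^{-1}\br_0\,p$ for the appropriate $p\in P_n$.

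For the reverse direction, given $\Delta^{2a\ell}\br=\br_0^k$ with $\br_0\in B_n$, I would first verify that $\br_0$ automatically lies in $\delta^b P_n$. Projecting to $S_n$ gives $\pi(\br_0)^k=\sigma_1^{-1}$, and an elementary cycle-structure argument shows that when $(n,k)=1$ a $k$-th root in $S_n$ of the $n$-cycle $\sigma_1^{-1}$ is itself an $n$-cycle $\rho$, after which $\langle\rho\rangle=\langle\rho^k\rangle=\Sigma_n$ forces $\pi(\br_0)=\sigma_1^b$. Then $\br_0$ determines a $g$-loop $\eta$, the loop $v:=\phi_k(\eta)$ has symmetry $C(n,k/\ell)$, and by the computation above it corresponds to the class $[\Delta^{-2a\ell}\br_0^k,0]=[\br,0]$, hence lies in the connected component of $u$. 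The only mildly non-obvious ingredient is the symmetric-group lemma about $k$-th roots of an $n$-cycle, and that is completely elementary; the remainder is bookkeeping with the isomorphisms already in place.
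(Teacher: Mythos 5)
Your proof is correct and follows essentially the same route as the paper, which obtains the proposition directly from the computation $[\br_0,a\ell/k]^k=[\br_0^k,a\ell]=[\Delta^{-2a\ell}\br_0^k,0]$ in $CB_n\times_\alpha\ZZ(1/k)$. The two details you supply beyond the paper's terse derivation --- that $P_n$-conjugacy is absorbed by the centrality of $\Delta^2$, and that any $k^{\mathrm{th}}$ root of $\Delta^{2a\ell}\br$ automatically projects to the $n$-cycle $\sigma_1^b$ and hence lies in the coset $\delta^b P_n$ --- are worthwhile completions rather than deviations.
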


Having a $k^{\mathrm{th}}$ root means of course that it can be written as $\Delta^{2a\ell}\br = \br_0^k$ for some braid $\br_0$.  Since $\chi(\Delta^2) = n(n-1)$, this requires $2a\ell n(n-1)+\chi(\br)$ to be a multiple of $k$, and hence the exponent sum satisfies
$$\chi(\br)\equiv - (n-1)\ell\bmod k.$$ 
This restricts the possible values of $k,\ell$.
We aim to consider the full question, allowing for the core, in a future paper.

Finally we mention a result of Gonz\'alez-Meneses \cite{G-M03} who shows that the $k^{\mathrm{th}}$ root of a braid, if it exists, is unique up to conjugacy in $B_n$.  Moreover, if the braid is of \emph{pseudo-Anosov type}, then a $k^{\mathrm{th}}$ root, if it exists, is unique. (The \emph{type} of $\br$ coincides with the type of $\Delta^2\br$ as is readily checked.)   This suggests that the map
$$\pi_0(\Lambda^g\Xn)\lra\pi_0(\Lambda^\chor\Xn)$$
may be injective. However, the conjugacy in \cite{G-M03} is by all possible elements of $B_n$, while $\pi_0$ is determined by conjugacy with respect to elements of $P_n$. So this is inconclusive, but again would have repercussions for estimates of numbers of critical points.

\paragraph{Acknowledgements}  We would like to thank Mark Roberts, Jelena Grbic and Peter Rowley for helpful discussions, and Dan Gries for creating the HTML5/Javascript code which displays the animations on the website \cite{JMweb}.  We would also like to thank the two anonymous referees whose suggestions have helped improve the presentation. The second author was supported during her PhD studies by Forrest Recruitment, and would like to express her gratitude to John and Stephanie Forrest for their support, and the interest they took in this research.

\small

\def\cprime{$'$}

\vskip 1cm

\noindent\begin{minipage}{0.3\textwidth}
\obeylines\it
School of Mathematics
University of Manchester
Manchester M13 9PL
UK
\end{minipage}

\bigskip
\bigskip

\noindent \emph{Corresponding author}: \texttt{ j.montaldi@manchester.ac.uk}

\end{document}